\documentclass[reqno,10pt]{amsart}
\usepackage{amsmath,mathrsfs}
\usepackage{amssymb, amsmath}
\usepackage{graphicx}
\usepackage{pdfsync}
\usepackage{color}
\usepackage[colorlinks,
            linkcolor=red,
            anchorcolor=blue,
            citecolor=green
            ]{hyperref}
\usepackage{colonequals}
\setlength{\oddsidemargin}{0mm}
\setlength{\evensidemargin}{0mm} \setlength{\topmargin}{-10mm}
\setlength{\textheight}{242mm} \setlength{\textwidth}{164mm}
%
%

\newtheorem{definition}{Definition}[section]
\newtheorem{theorem}{Theorem}[section]
\newtheorem{lemma}{Lemma}[section]
\newtheorem{remark}{Remark}[section]
\newtheorem{corollary}{Corollary}[section]
\newtheorem{proposition}{Proposition}[section]

\numberwithin{equation}{section}

\newcommand{\beq}{\begin{equation}}
\newcommand{\eeq}{\end{equation}}
\newcommand{\ben}{\begin{eqnarray}}
\newcommand{\een}{\end{eqnarray}}
\newcommand{\beno}{\begin{eqnarray*}}
\newcommand{\eeno}{\end{eqnarray*}}
\let\f=\frac

\newcommand{\be}{\begin{equation} \label}
	\newcommand{\ee}{\end{equation}}
\newcommand{\bea}{\begin{eqnarray}\label}
	\newcommand{\eea}{\end{eqnarray}}
\newcommand{\bas}{\begin{eqnarray*}}
	\newcommand{\eas}{\end{eqnarray*}}
\newcommand{\bit}{\begin{itemize}}
	\newcommand{\eit}{\end{itemize}}

\newcommand{\N}{{\mathbb N}}

\newcommand{\R}{{\mathbb R}}

\newcommand{\pa}{\partial}

\newcommand{\ba}{\begin{aligned}}
\newcommand{\ea}{\end{aligned}}
 \def\na{\nabla}
 \newcommand{\lr}[1]{\langle #1 \rangle}

\begin{document}

\title[Non-cutoff Boltzmann equation near Traveling Maxwellians]{Global stability and scattering theory for non-cutoff Boltzmann equation with soft potentials in the whole space: weak collision regime}

\author{Ling-Bing He}
\address[Ling-Bing He]{Department of Mathematical Sciences, Tsinghua University, Beijng, 100084, P.R. China.}
\email{hlb@tsinghua.edu.cn}
\author{Wu-Wei Li}
\address[Wu-Wei Li]{Department of Mathematical Sciences, Tsinghua University, Beijng, 100084, P.R. China.}
\email{ww-li20@mails.tsinghua.edu.cn}

\begin{abstract}  A Traveling Maxwellian $\mathcal{M} = \mathcal{M}(t, x, v)$ represents a traveling wave solution to the Boltzmann equation   in the whole space $\R^3_x$(for the spatial variable). The primary objective of this study is to investigate the global-in-time stability of $\mathcal{M}$ and its associated scattering theory in   $L^1_{x,v}$ space for the non-cutoff Boltzmann equation with soft potentials when the dissipative effects induced by collisions are {\it weak}. We demonstrate the following results: (i) $\mathcal{M}$ exhibits Lyapunov stability; (ii) The perturbed solution, which is assumed to satisfy the same conservation law as $\mathcal{M}$, scatters in   $L^1_{x,v}$ space towards a particular traveling wave (with an explicit convergence rate), which may not necessarily be $\mathcal{M}$. The key elements in the proofs involve the formulation of the {\it Strichartz-Scaled Boltzmann equation}(achieved through the Strichartz-type scaling applied to the original equation) and the propagation of analytic smoothness.  
\end{abstract}

\keywords{Boltzmann Equation, whole space, global Maxwellian, well-posedness, instability}

\maketitle
\setcounter{tocdepth}{1}
\tableofcontents

	
	\section{Introduction}
	In the present work, we will investigate the global-in-time stability  and the scattering theory for the non-cutoff Boltzmann equation with inverse power law potentials in the whole space $\R^3$  for the spatial variable. The classical  Boltzmann  equation  reads:
	\begin{equation}\label{Boltzmann}
		\partial_tF + v \cdot \nabla_xF = \mathsf{Q}(F, F),
	\end{equation}
	Here $F(t, x, v) \geq 0$ is a distributional function  of colliding particles which, at time $t>0$ and position $x \in \R^3$, move with velocity $v \in \R^3$. We emphasize that the   equation is one of the fundamental equations of mathematical physics and is a cornerstone of statistical physics.

	\subsection{Basic assumptions on the collision operator}
 The Boltzmann collision operator $\mathsf{Q}$ in \eqref{Boltzmann} acts only on the variable $v$, and is defined as
	\begin{equation*}
		\mathsf{Q}(F, G) = \int_{\mathbb{R}^3 \times \mathbb{S}^2}[F'_*G' - F_*G]B(v - v_*, \sigma)d\sigma dv_*.
	\end{equation*}
Some explanations are in order:

	\noindent(1). We use the shorthand $G = G(t, x, v), G' = G(t, x, v'), F_* = F(t, x, v_*), F'_* = F(t, x, v'_*)$ where $v', v'_*$ are defined in terms of $v, v_*, \sigma$ by
	\begin{equation*}
		v' = \frac{v + v_*}{2} + \frac{|v - v_*|}{2}\sigma, \:\:\: v'_* = \frac{v + v_*}{2} - \frac{|v - v_*|}{2}\sigma.
	\end{equation*}
	This follows the conservation law of momentum and energy of the collision:
	\begin{equation}\label{v'v*}
		v' + v'_* = v + v_*, \:\:\: |v'|^2 + |v'_*|^2 = |v|^2 + |v_*|^2.
	\end{equation}

	\noindent(2). The Boltzmann collision kernel $B = B(v - v_*, \sigma)$ is nonnegative and takes form of
	\begin{equation*}
		B(v - v_*, \sigma) = |v - v_*|^\gamma b(\cos\theta), \,\mbox{where}\,\cos\theta = \frac{v - v_*}{|v - v_*|} \cdot \sigma.
	\end{equation*}
	In the present, we will focus on the inverse power law potentials which read $\phi(r)=r^{-1/s}$ with $s \in (0, 1)$. By the physical argument, we will derive the explicit formula for $B$ which verifies 
	\begin{itemize}
		\item[$\mathbf{(A1)}$] The angular function $b(t)$ is not locally integrable and it satisfies
		\begin{equation*}
			\mathcal{K}\theta^{-1-2s} \le \sin\theta b(\cos\theta) \le \mathcal{K}^{-1}\theta^{-1-2s},~\mbox{with} ~0 < s < 1, ~\mathcal{K} > 0.
		\end{equation*}
		\item[$\mathbf{(A2)}$] The parameter $\gamma$ satisfies the condition $ \gamma \in (-3, 0)$.
		\item[$\mathbf{(A3)}$] Without lose of generality, we may assume that $B(v - v_*, \sigma)$ is supported in the set $0 \le \theta \le \pi/2$, i.e.$\frac{v - v_*}{|v - v_*|} \cdot \sigma \ge 0$, for otherwise $B$ can be replaced by its symmetrized form:
		\begin{equation*}
			\overline{B}(v - v_*, \sigma) = |v - v_*|^\gamma\big(b(\frac{v - v_*}{|v - v_*|} \cdot \sigma) + b(\frac{v - v_*}{|v - v_*|} \cdot (-\sigma))\big)\mathrm{1}_{\frac{v - v_*}{|v - v_*|} \cdot \sigma \ge 0},
		\end{equation*}
		where $\mathrm{1}_A$ is the characteristic function of the set $A$.
	\end{itemize}


	\subsection{Traveling Maxwellians, relative entropy and $H$-theorem}
	  {\it Traveling Maxwellians}, which must be the local Maxwellians, are referred to the traveling wave solutions to the Boltzmann equation. The precise definition is as follows(see also \cite{BGGL}):  
	\begin{definition}\label{deTM}
		$\mathcal{M} = \mathcal{M}(t, x, v) > 0$ is called a {\bf Traveling Maxwellian} if
		\begin{equation}\label{DefiTM}
			\partial_t\mathcal{M} + v \cdot \nabla_x\mathcal{M} = 0 = \mathsf{Q}(\mathcal{M}, \mathcal{M}).
		\end{equation}
	\end{definition}

 	 \begin{remark}
		 By the definition,   $\mathcal{M}$ is subject to two distinct mechanisms: the dispersion effect caused by the free transport operator $\partial_t + v \cdot \nabla_x$ and the dissipation effect induced by the collision operator $\mathsf{Q}$. These mechanisms impose contrasting influences on the behavior of $\mathcal{M}$. 
     \end{remark}

     \begin{remark} From the first equality in \eqref{DefiTM}, it is straightforward to deduce that $\mathcal{M}(t, x, v) = \mathcal{M}(0, x - tv, v)$. The second equality in \eqref{DefiTM} implies that $\mathcal{M}(t, x, v) = M_{[\rho, u, T]}$, where $M_{[\rho,u,T]}$ is the local Maxwellian defined as:
\begin{equation*}
			M_{[\rho,u,T]} := (2\pi T)^{-3/2}\rho\exp\big\{-\frac{|v - u|^2}{2T}\big\}.
		\end{equation*}
In the above expression, $\rho = \rho(t, x)$, $u = u(t, x)$, and $T = T(t, x)$ represent the macroscopic quantities, namely the density, velocity, and temperature, respectively. Therefore, a typical example of  {\it Traveling Maxwellians} would be given by:
\begin{equation*}
			\mathcal{M}(t, x, v) := e^{-|x - tv|^2 - |v|^2}.
		\end{equation*} 
	\end{remark}
	 
	  In \cite{LM}, the authors gave a complete classification of all  {\it Traveling Maxwellians}. We summarize as follows:
	\begin{proposition}\label{CharacterTM}
		$(a)$. The solution of Boltzmann equation $F = F(t, x, v)$ satisfies
		\begin{equation*}
			\frac{d}{dt}\int\phi(t, x, v)F(t, x, v)dxdv = 0.
		\end{equation*}
		if $\phi(t, x, v) = 1, v, x - tv, |v|^2, v \cdot (x - tv), |x - tv|^2, v \wedge (x - tv)$. Here $v \wedge x:=v x^T-xv^T$ is the skew tensor product.

		$(b)$. $\mathcal{M}$ is a Traveling Maxwellian if and only if 
		\begin{equation*}
			\ln\mathcal{M} \in \mathrm{span}\{1, v, x - tv, |v|^2, v \cdot (x - tv), |x - tv|^2, v \wedge (x - tv)\}. 
		\end{equation*} 

		$(c)$.  If $\int\mathcal{M}\, dvdx=1$ and $\int v\mathcal{M}\,dvdx=\int (x-tv)\mathcal{M}\,dvdx=0$, then there exist $(a,b,c)\in\R^3$ and  a $3 \times 3$ skew-symmetric matrix $A$ such that $a,c,ac-b^2>0$ and 
		\ben\label{ExformofM} \mathcal{M}(t, x, v) = \frac{\sqrt{\det Q}}{(2\pi)^3}\exp\bigg\{-\frac{1}{2}\big[a|v|^2 + 2bv \cdot (x - tv) + c|x - tv|^2 + 2v^\tau Ax\big]\bigg\},\een
		where $Q:=(ac-b^2)I+A^2$.
	\end{proposition}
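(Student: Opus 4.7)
For (a), I differentiate $\int \phi F\,dxdv$ in $t$, substitute $\partial_t F = -v\cdot\nabla_x F + \mathsf{Q}(F,F)$, and integrate by parts in $x$ to obtain
\begin{equation*}
\frac{d}{dt}\int \phi F\,dxdv = \int (\partial_t + v\cdot\nabla_x)\phi \cdot F\,dxdv + \int \phi\,\mathsf{Q}(F,F)\,dxdv.
\end{equation*}
For each listed $\phi$, a direct calculation shows $(\partial_t + v\cdot\nabla_x)\phi = 0$; e.g., $\phi = |x-tv|^2$ gives $\partial_t\phi = -2v\cdot(x-tv)$ and $v\cdot\nabla_x\phi = 2v\cdot(x-tv)$, and the skew-tensor case collapses via $v\wedge(x-tv) = v\wedge x$ (the $tvv^T$ terms cancel) followed by $v\wedge v = 0$. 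For the collision integral, at each fixed $(t,x)$ the function $\phi(t,x,\cdot)$ is a polynomial of degree $\le 2$ in $v$, hence lies in the span of the microscopic collision invariants $\{1, v, |v|^2\}$; the standard weak-form symmetrization of $\mathsf{Q}$ using \eqref{v'v*} then gives $\int\phi\,\mathsf{Q}(F,F)\,dv=0$.

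For (b), the ``if'' direction is immediate: an element of the span is quadratic in $v$ at each $(t,x)$, so $\mathcal{M}$ is a local Maxwellian in $v$ and $\mathsf{Q}(\mathcal{M},\mathcal{M}) = 0$, while the streaming equation is inherited termwise from (a). For the converse, $\mathsf{Q}(\mathcal{M},\mathcal{M})=0$ forces, by the equality case of the $H$-theorem, the ansatz
\begin{equation*}
\ln\mathcal{M}(t,x,v) = \alpha(t,x) + \beta(t,x)\cdot v + \gamma(t,x)|v|^2.
\end{equation*}
Plugging into $(\partial_t + v\cdot\nabla_x)\ln\mathcal{M} = 0$ and equating coefficients of monomials in $v$ yields a triangular system: degree $3$ gives $\nabla_x\gamma = 0$, so $\gamma = \gamma(t)$; degree $2$ gives $(\nabla_x\beta)_{\mathrm{sym}} = -\partial_t\gamma\, I$, hence $\beta(t,x) = -\partial_t\gamma(t)\, x + \eta(t) x + b(t)$ with $\eta$ skew-symmetric; degree $1$ gives $\nabla_x\alpha = -\partial_t\beta$, and the Hessian symmetry $\partial_i\partial_j\alpha = \partial_j\partial_i\alpha$ forces $\partial_t\eta = 0$; finally degree $0$ gives $\partial_t\alpha = 0$, which pins $\gamma$ to be a polynomial of degree $\le 2$ in $t$ and $b$ to be linear in $t$. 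Regrouping the resulting polynomial in the variable $x - tv$ identifies $\ln\mathcal{M}$ as a linear combination of exactly the thirteen listed functions, and a matching dimension count ($13$ free parameters versus a $13$-dimensional span) confirms the classification.

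For (c), start from the representation in (b) and impose normalization. The conditions $\int v\mathcal{M}\,dvdx = 0$ and $\int(x-tv)\mathcal{M}\,dvdx = 0$ eliminate the linear-in-$v$ and linear-in-$(x-tv)$ terms of $\ln\mathcal{M}$ (they integrate to zero against a recentered Gaussian), leaving precisely the quadratic form in \eqref{ExformofM}. The substitution $y = x - tv$ (Jacobian $1$), combined with $v^\tau A v = 0$ (since $A$ is skew), reduces the integrand to a pure Gaussian in $(v, y)\in\R^6$ with symmetric block matrix $\begin{pmatrix} aI & bI + A \\ bI - A & cI \end{pmatrix}$; a Schur-complement computation gives its determinant $\det((ac-b^2)I + A^2) = \det Q$, so $\int\mathcal{M}\,dvdx = 1$ fixes the prefactor $\sqrt{\det Q}/(2\pi)^3$ of \eqref{ExformofM}. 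Positive-definiteness of this block matrix (required for integrability) reduces via the same computation to $a>0$, $c>0$, and $Q>0$, which in particular entails $ac - b^2 > 0$.

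The principal obstacle is the degree-$2$ matching step in (b)$\Rightarrow$: the constraint only pins the symmetric part of $\nabla_x\beta$, leaving a free time-dependent skew-symmetric piece $\eta(t)$, and it is the subsequent Hessian compatibility for $\nabla\alpha$ that forces $\eta$ to be constant in $t$. This constant skew matrix is precisely the three-dimensional degree of freedom encoded by the invariant $v\wedge(x-tv)$ (absent in the spatially homogeneous case); isolating it cleanly before concluding the structural classification is the delicate point. Once this rotation invariant is correctly identified, (a), (b)$\Leftarrow$, and (c) reduce to polynomial bookkeeping and a six-dimensional Gaussian integration.
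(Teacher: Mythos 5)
Your proposal addresses all three parts, whereas the paper's appendix only treats part $(c)$ in detail (Lemma~\ref{LemforM2}), citing \cite{LM} for the conservation identities $(a)$ and the span characterization $(b)$. On $(c)$, your route largely coincides with the paper's: pass to the six-dimensional variable $z=(v,x-tv)$, reduce to a Gaussian with block matrix
\begin{equation*}
P=\begin{pmatrix} aI & bI+A \\ bI-A & cI \end{pmatrix},
\end{equation*}
and evaluate $\det P$ via the Schur complement using $(bI-A)(bI+A)=b^{2}I-A^{2}$ to obtain $\det P=\det((ac-b^{2})I+A^{2})=\det Q$. The one genuine divergence is how the linear part $p\cdot z$ is eliminated: you complete the square (``recentered Gaussian''), which implicitly presupposes $P>0$; the paper instead splits $\R^{6}$ into $\{p\cdot z\ge 0\}$ and its reflection and exploits the sign of $p\cdot z\,\bigl(e^{-p\cdot z/2}-e^{p\cdot z/2}\bigr)$, deducing $p=0$ using only $\mathcal{M}>0$ and the vanishing of $\Phi_{2},\Phi_{3}$, with no a priori assumption on $P$. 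Both are valid — positive-definiteness is forced by integrability anyway — but the paper's sign argument orders the steps more economically, and you should at least note that it is integrability of $\mathcal{M}$ that legitimizes your completion of the square.

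One step needs repair. In part $(a)$ you write that, at each fixed $(t,x)$, $\phi$ is a polynomial of degree $\le 2$ in $v$ and ``hence lies in the span of the microscopic collision invariants $\{1,v,|v|^{2}\}$.'' That inference is false as an implication: $v_{1}v_{2}$ and $v_{1}^{2}-v_{2}^{2}$ are degree $\le 2$ but are not collision invariants, and $\int v_{1}v_{2}\,\mathsf{Q}(F,F)\,dv$ does not vanish in general. What is true — and what you should say — is that for each listed $\phi$ the $v$-quadratic part is a scalar multiple of $|v|^{2}$: e.g.\ $|x-tv|^{2}=|x|^{2}-2t\,x\cdot v+t^{2}|v|^{2}$, $v\cdot(x-tv)=x\cdot v-t|v|^{2}$, and $v\wedge(x-tv)=v\wedge x$ is only degree $1$. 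Once stated this way, the weak-form symmetrization of $\mathsf{Q}$ indeed kills the collision contribution, and the rest of $(a)$ holds. Your part $(b)$ derivation — the triangular coefficient-matching in $v$, the Killing-field structure forcing $\beta$ affine in $x$ with skew linear part, the Hessian compatibility pinning $\eta$ to be constant in $t$, and the $13$-parameter dimension count — is correct and supplies the structural input that the paper relegates to \cite{LM}.
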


	\begin{definition} Suppose that $F(t,x,v)$ is a global solution to \eqref{Boltzmann}.      
	Let $\Phi:F\rightarrow (m_0, u_0, y_0, a_0, b_0, c_0, A_0) \in \mathbb{R}^+ \times \mathbb{R}^3 \times \mathbb{R}^3 \times \mathbb{R}^+ \times \mathbb{R} \times \mathbb{R}^+ \times \mathbb{R}^{3\times3}$ given by
		\begin{equation}\label{DefMapPHI}
			\Phi(F)=(\Phi_1(F),\cdots,\Phi_7(F))^T:=\int\begin{bmatrix} 1 \\ v \\ x - tv \\ |v|^2 \\ v \cdot (x - tv) \\ |x - tv|^2 \\ v \wedge x \end{bmatrix}F(t, x, v)dxdv = \begin{bmatrix} m_0 \\ u_0 \\ y_0 \\ a_0 \\ b_0 \\ c_0 \\ A_0 \end{bmatrix}.
		\end{equation}
		 With reference to Proposition \ref{CharacterTM}, we define $\Phi$ as a conserved mapping. More specifically, $\Phi(F) = (\Phi_1(F), \cdots, \\ \Phi_7(F))^T$ corresponds to the conservation laws of mass, momentum, center of mass, energy, scalar momentum moment, scalar inertial moment, and angular momentum, respectively.
 \end{definition}

Next we show that   the conservation law \eqref{DefMapPHI} of $F$ will determine a unique {\it Traveling Maxwellian}. More precisely, we have the following proposition(see \cite{LM}):

 \begin{proposition}
\label{PhiFM}
		Let $F(t, x, v)$ be a nonnegative solution to \eqref{Boltzmann} with the initial data $F_0$ satisfying
		\begin{equation}\label{boundedF}
			\int(1 + |v|^2 + |x - tv|^2)F(t, x, v)dxdv=\int(1 + |v|^2 + |x |^2)F_0dxdv < \infty.
		\end{equation} Then there exists a unique {\it Traveling Maxwellian} $\mathcal{M}(t,x,v)$ satisfying that $\Phi(F)=\Phi(\mathcal{M})$.
\end{proposition}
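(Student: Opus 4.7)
The plan is to reduce the problem to a finite-dimensional algebraic one and construct $\mathcal{M}$ explicitly from $\Phi(F)$ using the parametrization in Proposition \ref{CharacterTM}(c).

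Step 1 (Galilean reduction). Both \eqref{Boltzmann} and the class of Traveling Maxwellians are invariant under mass normalization and the Galilean shift $F(t,x,v)\mapsto m_0^{-1}F(t,\,x+y_0/m_0+tu_0/m_0,\,v+u_0/m_0)$. Applying this symmetry reduces matters to constructing, for a centered unit-mass density $\tilde F$ with remaining scalar moments $(\tilde a_0,\tilde b_0,\tilde c_0,\tilde A_0)$, the unique centered unit-mass Traveling Maxwellian with the same four moments.

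Step 2 (Algebraic system and its solution). Writing the candidate centered $\mathcal{M}$ via \eqref{ExformofM} with parameters $(a,b,c,A)$ and changing variables $y=x-tv$ under the integral, the exponent becomes $\tfrac12(v,y)^{T}\mathbf{M}(v,y)$ with
\[
\mathbf{M}=\begin{pmatrix} aI & bI+A \\ bI-A & cI \end{pmatrix}, \qquad \mathbf{M}^{-1}=\begin{pmatrix} cQ^{-1} & -(bI+A)Q^{-1} \\ -(bI-A)Q^{-1} & aQ^{-1} \end{pmatrix},
\]
the inversion using that $A$ commutes with $Q:=(ac-b^2)I+A^2$. Reading off the four prescribed moments as Gaussian integrals and taking traces (respectively, the antisymmetric part for $\tilde A_0$) yields
\[
\tilde a_0=c\,\mathrm{tr}(Q^{-1}),\quad \tilde c_0=a\,\mathrm{tr}(Q^{-1}),\quad \tilde b_0=-b\,\mathrm{tr}(Q^{-1}),\quad \tilde A_0=-2AQ^{-1}.
\]
Setting $\tau:=\mathrm{tr}(Q^{-1})$ gives $a=\tilde c_0/\tau$, $b=-\tilde b_0/\tau$, $c=\tilde a_0/\tau$, hence $ac-b^2=\mu/\tau^2$ with $\mu:=\tilde a_0\tilde c_0-\tilde b_0^2$. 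Writing $Ax=\omega\times x$, so that $Q=\alpha I+\omega\omega^{T}$ with $\alpha=ac-b^2-|\omega|^2$, the Sherman--Morrison formula shows $AQ^{-1}$ has axis vector $\omega/\alpha$; matching $\tilde A_0=-2AQ^{-1}$ then forces $\omega$ to be collinear with the axis vector $\omega_0$ of $\tilde A_0$ and fixes the scalar of proportionality. Substituting back into the identity $\tau=(\alpha+|\omega|^2)^{-1}+2\alpha^{-1}$ reduces everything to a single quadratic $3u^2-2\mu u-\mu|\omega_0|^2=0$ for $u:=\mu-\tau$, which has exactly one positive root; one then checks this root produces positive $a,c,ac-b^2$ and $Q$, establishing existence and uniqueness of $(a,b,c,A)$ and hence of $\mathcal{M}$.

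The main obstacle is verifying that the unique positive root of the above quadratic actually lies in the admissible window $(0,\mu)$, equivalently $\mu>|\omega_0|^2$. This is where the nonnegativity of $F$ and the finiteness \eqref{boundedF} of its moments enter: Cauchy--Schwarz for the $\R^3$-valued functions $v$ and $y$ in $L^2(\tilde F\,dxdv)$, applied to the scalar-triple-product identity $\omega_0\cdot e=\int (e\times v)\cdot y\,\tilde F\,dxdv$ for unit vectors $e$ and combined with the Gram-matrix positivity of $(v,y)$, yields $|\omega_0|^2\le \mu$, with strict inequality unless $\tilde F$ degenerates to a measure supported on a lower-dimensional set. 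Once $\mathcal{M}$ is constructed, the time-independence of both $\Phi(F)$ and $\Phi(\mathcal{M})$ provided by Proposition \ref{CharacterTM}(a) guarantees the matching $\Phi(F)=\Phi(\mathcal{M})$ at every $t\ge 0$.
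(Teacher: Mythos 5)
The paper does not itself prove Proposition~\ref{PhiFM}; it attributes it to~\cite{LM}, and the appendix's Lemma~\ref{LemforM2} only supplies the normalization step needed for Proposition~\ref{CharacterTM}(c). Your explicit construction therefore fills an expository gap rather than paralleling an argument in the paper. The outline (Galilean reduction, Gaussian moment matching, explicit inversion of the resulting algebraic system) is the natural one and the algebra checks out: the block inverse of $\mathbf{M}$, the trace identities $\tilde a_0=c\,\mathrm{tr}(Q^{-1})$, $\tilde c_0=a\,\mathrm{tr}(Q^{-1})$, $\tilde b_0=-b\,\mathrm{tr}(Q^{-1})$ (where one uses that $\mathrm{tr}(AQ^{-1})=0$), the antisymmetrization $N-N^{T}=-2AQ^{-1}$, the Sherman--Morrison simplification $AQ^{-1}=\alpha^{-1}A$ (since $A\omega=0$), and the reduction to the quadratic $3u^2-2\mu u-\mu|\omega_0|^2=0$ are all correct. (A small imprecision: the admissible window should be $[2\mu/3,\,\mu)$ rather than $(0,\mu)$, because $|\omega|^2\ge 0$ also forces $\tau\le\mu/3$; the positive root lands there automatically, so this does not affect the conclusion.)

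There is, however, a genuine gap in the admissibility step. The Cauchy--Schwarz you invoke, applied to $\omega_0\cdot e=\pm\int(e\times v)\cdot y\,\tilde F$, yields only $|\omega_0\cdot e|\le |e\times v|_{L^2(\tilde F)}\,|y|_{L^2(\tilde F)}\le(\tilde a_0\tilde c_0)^{1/2}$, hence $|\omega_0|^2\le\tilde a_0\tilde c_0$, which is strictly weaker than the needed bound $|\omega_0|^2\le\mu=\tilde a_0\tilde c_0-\tilde b_0^2$. The phrase ``combined with Gram-matrix positivity'' points in the right direction but the actual estimate is missing. One correct route: since $(e\times v)\perp v$ pointwise, $(e\times v)\cdot y=(e\times v)\cdot y^{\perp}$ with $|y^{\perp}|^2=|y|^2-(v\cdot y)^2/|v|^2$, so by Cauchy--Schwarz $|\omega_0\cdot e|\le\tilde a_0^{1/2}\big(\tilde c_0-\int\tfrac{(v\cdot y)^2}{|v|^2}\tilde F\big)^{1/2}$, and a second Cauchy--Schwarz gives $\int\tfrac{(v\cdot y)^2}{|v|^2}\tilde F\ge\tilde b_0^{2}/\tilde a_0$, whence $|\omega_0|^2\le\mu$. (Alternatively, use Lagrange's identity $(v\cdot y)^2+|v\times y|^2=|v|^2|y|^2$ together with the triangle inequality for the $\R^4$-valued integrand $(v\cdot y,\,y\times v)$, then one Cauchy--Schwarz.) This argument needs to appear, since without $|\omega_0|^2\le\mu$ the quadratic's positive root may exceed $\mu$ and no admissible $(a,b,c,A)$ exist.

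Finally, the \emph{strict} inequality $\mu>|\omega_0|^2$ can fail for degenerate densities: a unit point mass at $(v,y)=(e_1,e_2)$ gives $\mu=|\omega_0|^2=1$, and $F\equiv0$ has no Traveling Maxwellian at all. You acknowledge the degeneracy caveat, but the proposition as stated says ``nonnegative solution'' with no explicit nondegeneracy hypothesis, so to close the argument you should indicate why a Boltzmann solution (say with finite entropy, hence absolutely continuous) cannot saturate the inequality. As written, that step is asserted rather than proved.
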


 Thanks to Proposition \ref{PhiFM}, we can introduce the conception of relative entropy:   
\begin{definition} Let $F=F(t,x,v)$ and  $\mathcal{M}=\mathcal{M}(t,x,v)$ satisfy all the properties in Proposition \ref{PhiFM}. Then the relative entropy $\mathcal{H}[F|\mathcal{M}]$ and the associated  entropy dissipation are defined by
 \[\mathcal{H}[F|\mathcal{M}](t):= \int[F\ln\frac{F}{\mathcal{M}} - F + \mathcal{M}]dxdv.\]
  \[\mathcal{D}[F](t): = \int[F'_*F' - F_*F]\ln\frac{F'_*F'}{F_*F}Bd\sigma dvdv_*dx\ge0.\]
 The famous $H$-theorem can be described as follows: 
 \begin{equation}\label{HTheorem}
			\frac{d}{dt}\mathcal{H}[F|\mathcal{M}](t) + \mathcal{D}[F](t) = 0.
		\end{equation}
\end{definition}

Thanks to $H$-theorem, the relative entropy will be decreasing w.r.t. time variable. 

\subsection{Problem, difficulty and strategy} In this subsection, our primary objective is to illustrate the main purpose of our current work. We will begin by providing a brief review of previous research, after which we will delve into the primary challenges associated with the problems at hand and discuss the strategies employed to overcome them.

\subsubsection{Main problems and short review} In the present work, we are curious about two  problems on the non-cutoff Boltzmann equation \eqref{Boltzmann}:
\smallskip

(Q1). Does any {\it Traveling Maxwellian} $\mathcal{M}(t,x,v)$ exhibit the Lyapunov stability? What is the associated scattering theory in $L^1_{x,v}$ space? 

\smallskip

(Q2). If the perturbed solution $F$ satisfies that $\Phi(F)=\Phi(\mathcal{M})$, what is the dynamical behavior of the relative entropy $\mathcal{H}[F|\mathcal{M}](t)$? Does it converge to 0 as $t$ goes to infinity?

\smallskip

The Boltzmann equation in the whole(for the spatial variable) has been attracted lots of attention. In what follows, we give a short review of pertinent results  to $(Q1)$ and $(Q2)$.  The pioneering work is due to Kaniel-Shinbrot in \cite{KaSh}. They introduced the so-called Kaniel-Shinbrot iteration scheme to prove the local well-posedness of the cutoff equation. 

\noindent $\bullet$ In \cite{BGGL}, the question $(Q1)$ was thoroughly addressed for the cutoff Boltzmann equation with soft potentials under the condition that the mass of the  {\it Traveling Maxwellians} $\mathcal{M}=\mathcal{M}(t,x,v)$ is small, denoted as $\Phi_1(\mathcal{M})=m_0<1$. The authors also provided a negative response to $(Q2)$, demonstrating that the quantity  
\[\lim_{t\rightarrow\infty}\mathcal{H}[F|\mathcal{M}](t)\neq0.\]
For further insights into the stability of $\mathcal{M}$ without rotation (i.e., when $b=0$ and $A=0$ in \eqref{ExformofM}), as well as the long-term dynamics governed by dispersion, we recommend referring to \cite{Alonso-Gamba, Goudon, Lu, Toscani}.

\noindent $\bullet$ To ensure the stability of the vacuum in collisional kinetic models, Iller and Shinbrot established the first well-posedness of the cutoff equation globally in their work \cite{IlSh}. By leveraging the dispersion effect of the free transport equation, the authors in \cite{Arsenio,HJ2} demonstrated global existence in low regularity spaces using the Strichartz estimates. For a similar result employing the Wigner transform and bilinear spacetime estimates, we refer to \cite{CDP1}.
In a recent study by Luk \cite{Luk}, global stability of the vacuum was proven for the Landau equation with moderate soft potentials. Regarding the non-cutoff Boltzmann equation, Chaturvedi \cite{Ch} established similar results when $\gamma+2s\ge0$. Both authors effectively utilize the dispersive properties of the transport operator, employing vector field methods and space-time weights to demonstrate integrable time decay of the collisional operator.

\subsubsection{Difficulties and Strategies}\label{DerivSSBE} The primary challenge in addressing $(Q1)$ and $(Q2)$ is finding a balance between the dispersion effect caused by the free transport operator and the dissipation effect caused by the collision operator. To differentiate between these two distinct mechanisms, we propose the introduction of the {\it Strichartz-Scaled Boltzmann equation}. This equation is derived by applying the Strichartz-type scaling to the original equation and incorporating the Galilean invariance of the collision operator. Our overall strategy can be summarized as follows.
\smallskip

Let $F$ be a global solution to \eqref{Boltzmann} with the given initial data $F_0$. By Proposition \ref{PhiFM}, we assume that  $\mathcal{M}=\mathcal{M}(t,x,v)$ is the associated {\it Traveling Maxwellian} satisfying that $\Phi(F)=\Phi(M)$. In the subsequent steps, we will perform a series of variable transformations to convert $\mathcal{M}$ into the normalized {\it   Maxwellian} $\mathbf{M}=(2\pi)^{-3}e^{-\frac{1}{2}(|x|^2+|v|^2)}$. Additionally, we will carefully note the corresponding changes in variables and apply them to the equation.

\smallskip

\underline{\it Step 1: Normalizing the mass, the momentum and the center of mass}. Let $m:=\Phi_1(\mathcal{M})$,  $\mathcal{M}_1(t, x, v):= m^{-1}\mathcal{M}(t, x, v)$ and $F_1(t, x, v): = m^{-1}F(t, x, v)$. Now we have  $\Phi_1(\mathcal{M}_1)= 1$ and
	\begin{equation*}
		\partial_tF_1 + v \cdot \nabla_xF_1 = m\mathsf{Q}(F_1, F_1).
	\end{equation*}
Suppose that $u:=\Phi_2(\mathcal{M}_1)$, $y:=\Phi_3(\mathcal{M}_1)$ and let  
	  \beno \mathcal{M}_2(t, x, v) := \mathcal{M}_1(t, x + y + tu, v + u),\quad F_2(t, x, v) := F_1(t, x + y + tu, v + u).\eeno
	   These give that $\Phi_1(\mathcal{M}_2)=1$ and $\Phi_2(\mathcal{M}_2)=\Phi_3(\mathcal{M}_2)=0$. Moreover, it holds that
	   \ben 
	   \label{EqF2}&\partial_tF_2 + v \cdot \nabla_xF_2 = m\mathsf{Q}(F_2, F_2).\een

 \smallskip

\underline{\it Step 2: Scaling transform w.r.t. spatial and velocity variables}. Thanks to Proposition \ref{CharacterTM}(c),  there exist $(a,b,c)\in\R^3$ and  a $3 \times 3$ skew-symmetric matrix $A$ such that $a,c,ac-b^2>0$ and 
		\ben\label{ExformofM2} \mathcal{M}_2(t, x, v) = \frac{\sqrt{\det Q}}{(2\pi)^3}\exp\bigg\{-\frac{1}{2}\big[a|v|^2 + 2bv \cdot (x - tv) + c|x - tv|^2 + 2v^\tau Ax\big]\bigg\},\een
		where $Q:=(ac-b^2)I+A^2$.  We observe that 
	\begin{equation*}
		a|v|^2 + 2bv \cdot (x - tv) + c|x - tv|^2 + 2v^\tau Ax = \tau(t)^2\big|v + \frac{(b - ct + A)x}{\tau(t)^2}\big|^2 + c|x|^2 - \frac{|(b - ct + A)x|^2}{\tau(t)^2},
	\end{equation*}
	where $\tau(t) := \sqrt{a - 2bt + ct^2}$, and $(b - ct + A)x$ is short for $(b - ct)x + Ax$. Since 
 $ac - b^2 > 0$, we get that $a - 2bt + c t^2 > 0$ which makes sense the definition of $\tau(t)$. 

  Now we are in a position to introduce the scaling transform from $(t, x, v)$ to $(t, \tau(t)x, \tau(t)^{-1}(v - (b - ct + A)x))$. More precisely,  let
\ben\label{DefM3F3}
		&\qquad\mathcal{M}_3(t, x, v):= \mathcal{M}_2(t, \tau(t)x, \frac{v - (b - ct + A)x}{\tau(t)}),\quad
F_3(t, x, v):= F_2(t, \tau(t)x, \frac{v - (b - ct + A)x}{\tau(t)}).
\een
 
  We claim that $\mathcal{M}_3(t, x, v)= \frac{\sqrt{\det Q}}{(2\pi)^3}e^{-\frac{1}{2}(|v|^2 + (ac - b^2)|x|^2 - |Ax|^2)}.$ This easily follows the fact
	\begin{equation*}
		|v|^2 + c\tau(t)^2|x|^2 - |(b - ct + A)x|^2 = |v|^2 + (ac - b^2)|x|^2 - |Ax|^2
	\end{equation*}
	where $x \cdot Ax = 0$ since $A$ is skew-symmetry.  To derive the equation for $F_3$, by the definition, we first have
	\begin{equation*}
	\begin{aligned}
		\partial_tF_3 &= \partial_tF_2 + \frac{(ct - b)x}{\tau(t)} \cdot \nabla_xF_2 - \frac{(ct - b)v}{\tau(t)^3} \cdot \nabla_vF_2 + \frac{(ac - b^2)x + (ct - b)Ax}{\tau(t)^3} \cdot \nabla_vF_2 \\
		\nabla_xF_3 &= \tau(t)\nabla_xF_2 + \frac{ct - b + A}{\tau(t)}\nabla_vF_2, \:\:\:\:\:\: \nabla_vF_3 = \frac{1}{\tau(t)}\nabla_vF_2.
	\end{aligned}
	\end{equation*}
These immediately imply that
	\begin{equation*}
	\begin{aligned}
		\nabla_vF_2 &= \tau(t)\nabla_vF_3,  
		\nabla_xF_2 = \frac{1}{\tau(t)}\nabla_xF_3 - \frac{ct - b + A}{\tau(t)^2}\nabla_vF_2 = \frac{1}{\tau(t)}\nabla_xF_3 - \frac{ct - b + A}{\tau(t)}\nabla_vF_3 \\
		\partial_tF_2 &= \partial_tF_3 - \frac{(ct - b)x}{\tau(t)} \cdot \nabla_xF_2 + \frac{(ct - b)v}{\tau(t)^3} \cdot \nabla_vF_2 - \frac{(ac - b^2)x + (ct - b)Ax}{\tau(t)^3} \cdot \nabla_vF_2 \\
		&= \partial_tF_3 - \frac{(ct - b)x}{\tau(t)^2} \cdot \nabla_xF_3 + \frac{(ct - b)x \cdot (ct - b + A)}{\tau(t)^2}\nabla_vF_3 \\
		&+ \frac{(ct - b)v}{\tau(t)^2} \cdot \nabla_vF_3 - \frac{(ac - b^2)x + (ct - b)Ax}{\tau(t)^2} \cdot \nabla_vF_3.
	\end{aligned}
	\end{equation*}

	Now we can get the equation for $F_3$. On one hand,  we have
	\begin{equation*}
	\begin{aligned}
	&(\partial_tF_2 + v \cdot \nabla_xF_2)(t, \tau(t)x, \tau(t)^{-1}(v - (b - ct + A)x))
	\\&= \partial_tF_3 - \frac{(ct - b)x}{\tau(t)^2} \cdot \nabla_xF_3 + \frac{(ct - b)x \cdot (ct - b + A)}{\tau(t)^2}\nabla_vF_3 + \frac{(ct - b)v}{\tau(t)^2} \cdot \nabla_vF_3 \\
		&- \frac{(ac - b^2)x + (ct - b)Ax}{\tau(t)^2} \cdot \nabla_vF_3 + \frac{v - (b - ct + A)x}{\tau(t)} \cdot \frac{\nabla_xF_3 - (ct - b + A)\nabla_vF_3}{\tau(t)} \\
		&= \partial_tF_3 + \frac{(v - Ax) \cdot (\nabla_x - A\nabla_v)F_3 - (ac - b^2)x \cdot \nabla_vF_3}{\tau(t)^2}.
	\end{aligned}
	\end{equation*}
	On the other hand, since $\mathsf{Q}$ is Galilean invariant, one may have
 \begin{equation*}
		 m\mathsf{Q}(F_2, F_2)(t, \tau(t)x, \tau(t)^{-1}(v - (b - ct + A)x)) = m\tau(t)^{-3 - \gamma}\mathsf{Q}(F_3, F_3).
	\end{equation*}
	To see it, if $\bar{v} = \frac{v - (b - ct + A)x}{\tau(t)}$, we change of variables from $v_*$ to  $\bar{v}_*:= \frac{v_* - (b - ct + A)x}{\tau(t)}$. Correspondingly, we have 
$\bar{v}' = \frac{v' - (b - ct + A)x}{\tau(t)}, \bar{v}'_* = \frac{v'_* - (b - ct + A)x}{\tau(t)}.$
	This leads to the desired result. 
	
	Finally by \eqref{EqF2} we get that 
\ben\label{EquF3}
		\partial_tF_3 + \frac{(v - Ax) \cdot (\nabla_x - A\nabla_v)F_3 - (ac - b^2)x \cdot \nabla_vF_3}{\tau(t)^2} = m\tau(t)^{-3 - \gamma}\mathsf{Q}(F_3, F_3).
\een
\smallskip

\underline{\it Step 3: Linear transform w.r.t. spatial variable}.  Recalling  the definition of \eqref{DefM3F3}, we observe that 
	\begin{equation*}
		(ac - b^2)|x|^2 - |Ax|^2 = x^\tau((ac - b^2)I + A^2)x = x^\tau Qx
	\end{equation*}
	Since $Q$ is positive definite by Proposition \ref{CharacterTM}, there exists a positive definite matrix $B$ such that $Q = B^2$. Then $x^\tau Qx = x^\tau B^\tau Bx = |Bx|^2$ and $\sqrt{\det Q} = \det B$. We set
\ben\label{DefM4F4}
		\mathcal{M}_4(t, x, v):= (\det B)^{-1}\mathcal{M}_3(t, B^{-1}x, v),\quad F_4(t, x, v) = (\det B)^{-1}F_3(t, B^{-1}x, v).
	\een

  It is easy to verify that $\mathcal{M}_4=\mathbf{M}$.  To derive the equation for $F_4$, we compute that
	\beno
		&\partial_tF_4(t, x, v)  = (\det B)^{-1}\partial_tF_3(t, B^{-1}x, v),\,
		\nabla_xF_4(t, x, v) = (\det B)^{-1}D^{-1}\nabla_xF_3(t, B^{-1}x, v), \\
		&\nabla_vF_4(t, x, v)  = (\det B)^{-1}\nabla_vF_3(t, B^{-1}x, v),  
	\eeno
which implies that 
\beno 
		&(\partial_tF_3)(t, B^{-1}x, v) = (\det B)\partial_tF_4(t, x, v)F_4(t, x, v),\,
		(\nabla_xF_3)(t, B^{-1}x, v) = (\det B)D\nabla_xF_4(t, x, v), \\
		&(\nabla_vF_3)(t, B^{-1}x, v) = (\det B)\nabla_vF_4(t, x, v).
\eeno
	 
	 Thanks to \eqref{EquF3}, we  get the equation for $F_4$:
\beno
		&\qquad \partial_tF_4 + \frac{(v - AB^{-1}x) \cdot (B\nabla_x - A\nabla_v)F_4 - (ac - b^2)B^{-1}x \cdot \nabla_vF_4}{\tau(t)^2} = m(\det B)\tau(t)^{-3 - \gamma}\mathsf{Q}(F_4, F_4).
	\eeno
	 Using the facts $AB = BA$, $AB^{-1}x \cdot B\nabla_x = Ax \cdot \nabla_x$,  $v \cdot B\nabla_x = Bv \cdot\nabla_x, v \cdot A\nabla_v = A^\tau v \cdot \nabla_v = -Av \cdot \nabla_v$ and 
\beno
		AB^{-1}x \cdot A\nabla_v - (ac - b^2)B^{-1}x \cdot \nabla_v = x^\tau B^{-1}A^\tau A\nabla_v - (ac - b^2)x^\tau B^{-1}\nabla_v \\
		= -x^\tau B^{-1}(A^2 + (ac - b^2)I)\nabla_v = -x^\tau B\nabla_v = -Bx \cdot \nabla_v, 
\eeno
  we rewrite the equation of $F_4$ as follows
\ben\label{EqF4}
		 \partial_tF_4 + \frac{Bv \cdot\nabla_x - Bx \cdot \nabla_v + Av \cdot \nabla_v - Ax \cdot \nabla_x}{\tau(t)^2}F_4 =m(\det B)\tau(t)^{-3 - \gamma}\mathsf{Q}(F_4, F_4).
	\een 

If we keep track of all the tranformations, it is not difficult to see that:
\ben\label{connectionF4F} F_4(t,x,v)=m^{-1}(\det B)^{-1}F(t, \tau B^{-1}x + y + tu, \frac{v - (b - ct + A)B^{-1}x}{\tau} + u), \een where we recall that $(m,u,y)=(\Phi_1(F),m^{-1}\Phi_2(F),m^{-1}\Phi_3(F))$ and $(A,B,\tau,b,c)$ is determined by the coefficients appearing in \eqref{ExformofM2}.

	\subsubsection{Strichartz-Scaled Boltzmann Equation}    Now we are in a position to introduce the equation:
\begin{definition} Let $A,B\in  \R^{3 \times 3}$ and $a,b,c,m\in\R$ satisfy that
   (i). $m, a, c, ac-b^2 > 0$; (ii) $A$ is skew-symmetric and $B$ is positive definite satisfying that  $B^2 = (ac - b^2)I + A^2$. The {\bf Strichartz-Scaled Boltzmann Equation}  associated to the given {\it Traveling Maxwellian} $\mathcal{M}$ is defined by
 \ben\label{DefiSSBE}
 \partial_tG + \tau^{-2}\mathsf{T}G = m(\det B) \tau^{-3 - \gamma}\mathsf{Q}(G, G),
 \een
 where 
 \ben\label{DefiT}\mathsf{T}:= Bv \cdot\nabla_x - Bx \cdot \nabla_v + Av \cdot \nabla_v - Ax \cdot \nabla_x, \quad \tau: = \tau(t) = \sqrt{a - 2bt + ct^2}.\een 
Here $m=\Phi_1(\mathcal{M})$ and $(a,b,c,A,B)$ stems from $\mathcal{M}$  via \eqref{ExformofM2} and the argument in {\it Step 1} in Subsection \ref{DerivSSBE}. 
 \end{definition}

Next we show the basic properties of  {\it Strichartz-Scaled Boltzmann equation}  \eqref{DefiSSBE},  which are as the same as those of the original equation \eqref{Boltzmann}.
\smallskip

\noindent $\bullet$ {\it Conservation laws and $H$-theorem of {\it Strichartz-Scaled Boltzmann equation}  \eqref{DefiSSBE}}. We have 
 
\begin{proposition}\label{ConsHthSSBE} Let $G$ be a global  smooth solution to \eqref{DefiSSBE}.  
\begin{enumerate}
	\item[(i).]  If $\phi(t, x, v) = 1, v, x - tv, |v|^2, |x - tv|^2, v \cdot (x - tv), v \wedge x$, then
\ben \f{d}{dt}\int_{\R^6}\phi(t, \tau B^{-1}x, \frac{v - (b - ct + A)B^{-1}x}{\tau})G dxdv = 0;
\een
\item[(ii).]  If $\mathcal{H}[G|\mathbf{M}]:= \int_{\R^6}[G\ln\frac{G}{\mathbf{M}} - G + \mathbf{M}]dvdx$, then
\begin{equation}\label{HtheoremSSBE}
		\frac{d}{dt}\mathcal{H}[G|\mathbf{M}] + m(\det B)\tau^{-3 - \gamma}\mathcal{D}[G] = 0,
	\end{equation}
	where
	\ben\label{EnDissip}
	 \mathcal{D}[G] = \int_{\R^9 \times \mathbb{S}^2}[G'_*G' - G_*G]\ln\frac{G'_*G'}{G_*G}Bd\sigma dvdv_*dx \ge 0.
\een
\end{enumerate}
\end{proposition}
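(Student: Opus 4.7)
My plan is to deduce both parts of Proposition~\ref{ConsHthSSBE} directly from their classical analogs on the Boltzmann side, by pushing forward through the chain of invertible transformations constructed in \textit{Step~1}--\textit{Step~3}. Relation~\eqref{connectionF4F} expresses every smooth global solution of \eqref{DefiSSBE} as $G(t,x,v)=m^{-1}(\det B)^{-1}F(t,X,V)$, where $F$ solves the Boltzmann equation \eqref{Boltzmann} and
\begin{equation*}
(X,V):=\bigl(\tau B^{-1}x+y+tu,\;\tau^{-1}(v-(b-ct+A)B^{-1}x)+u\bigr).
\end{equation*}
The same transformations send $\mathbf{M}$ back to the associated Traveling Maxwellian, so $\mathcal{M}(t,X,V)=m(\det B)\mathbf{M}(t,x,v)$ at matching points. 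A Jacobian count, based on the block-triangular form of the differential of $(x,v)\mapsto(X,V)$ with diagonal blocks $\tau B^{-1}$ and $\tau^{-1}I$, gives $dX\,dV=(\det B)^{-1}\,dx\,dv$; and since $V-V_*=\tau^{-1}(v-v_*)$ with $\sigma$ unchanged under the same change of variables, one also gets $B(V-V_*,\sigma)\,dV_*=\tau^{-3-\gamma}B(v-v_*,\sigma)\,dv_*$.

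For part~(i), the change of variables turns
\begin{equation*}
\int_{\R^6}\phi\bigl(t,\tau B^{-1}x,\tau^{-1}(v-(b-ct+A)B^{-1}x)\bigr)\,G\,dxdv
\end{equation*}
into $m^{-1}\int_{\R^6}\phi(t,X-y-tu,V-u)\,F(t,X,V)\,dXdV$, so the only thing left is to verify that each of the seven functions $1,V,X-tV,|V|^2,|X-tV|^2,V\cdot(X-tV),V\wedge X$ from Proposition~\ref{CharacterTM}(a) stays in the same span under the affine shift $(X,V)\mapsto(X-y-tu,V-u)$. This is a short entry-by-entry expansion; the only term needing care is the angular-momentum one, where I first write $V\wedge X=V\wedge(X-tV)$ using $V\wedge V=0$ and then, to handle the residual $u\wedge X$ appearing after the shift, I invoke $u\wedge X=u\wedge(X-tV)+t(u\wedge V)$.

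For part~(ii), the cancellations $F/\mathcal{M}=G/\mathbf{M}$ and $F_*F/(F'_*F')=G_*G/(G'_*G')$ (the factor $m(\det B)$ cancels inside every logarithm), combined with the two Jacobian identities above, yield
\begin{equation*}
\mathcal{H}[F|\mathcal{M}]=m\,\mathcal{H}[G|\mathbf{M}],\qquad
\mathcal{D}[F]=m^2(\det B)\tau^{-3-\gamma}\mathcal{D}[G].
\end{equation*}
Plugging these into the classical $H$-theorem \eqref{HTheorem} for $F$ and dividing through by $m$ produces \eqref{HtheoremSSBE}. The main obstacle is simply keeping the bookkeeping clean: with $\tau$, $B$, $A$, and the shear $(b-ct+A)B^{-1}x$ all active at once, it is easy to misplace a factor of $\tau$ or $\det B$; everything else in the argument is either a short linear-algebraic check or a direct appeal to facts already proved for \eqref{Boltzmann}.
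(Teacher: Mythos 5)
Your proposal is correct, and in spirit it is the same reduction the paper uses: convert the putative conserved functionals of \eqref{DefiSSBE} back into conserved functionals of a solution of the Boltzmann equation by inverting the transformations of \textit{Step~1}--\textit{Step~3}, and read off the $H$-theorem under the same pushforward. The one genuine difference is the endpoint of the reduction. The paper unwinds only to $F_2$, which already satisfies $\Phi_1=1$, $\Phi_2=\Phi_3=0$: since $G(t,x,v)=(\det B)^{-1}F_2\bigl(t,\tau B^{-1}x,\tau^{-1}(v-(b-ct+A)B^{-1}x)\bigr)$, the integrals in~(i) are, after the change of variable, literally $\int\phi(t,x,v)F_2\,dxdv$ and Proposition~\ref{CharacterTM}(a) applies with no further work. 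You unwind one extra step, to $F$, which brings in the additional affine shift $(X,V)\mapsto(X-y-tu,V-u)$; that is why you need the side calculation verifying that each of the seven $\phi$'s transforms into a constant-coefficient combination of the seven. Your treatment of the angular-momentum term is the right idea, but as written you leave implicit the cancellation $-\,t\,V\wedge u + t\,V\wedge u=0$ between the term produced by the shift and the one produced by $u\wedge X=u\wedge(X-tV)+t(u\wedge V)$; it is cleaner to first replace $V\wedge X$ by $V\wedge(X-tV)$ and then shift, which produces no time-dependent coefficients at all. For part~(ii), your two scaling identities $\mathcal{H}[F|\mathcal{M}]=m\,\mathcal{H}[G|\mathbf{M}]$ and $\mathcal{D}[F]=m^2(\det B)\tau^{-3-\gamma}\mathcal{D}[G]$ are correct and together with \eqref{HTheorem} give \eqref{HtheoremSSBE}; the paper simply asserts this is "easily checked," and you have supplied the check. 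In short: a correct proof, a marginally longer route than the paper's because you go all the way to $F$ rather than stopping at $F_2$.
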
 

\begin{proof} We only need to give a proof to result $(i)$ since result $(ii)$ is easily checked.  From \eqref{connectionF4F}, we  get that 
\ben\label{connectionGF2}G(t,x,v)=F_2(t, \tau B^{-1}x, \frac{v - (b - ct + A)B^{-1}x}{\tau}).\een Thus if   $\phi(t, x, v) = 1, v, x - tv, |v|^2, |x - tv|^2, v \cdot (x - tv), v \wedge x$, by change of variable, we have
\[\f{d}{dt}\int_{\R^6}\phi(t, \tau B^{-1}x, \frac{v - (b - ct + A)B^{-1}x}{\tau})G(t,x,v)dxdv =  \f{d}{dt}\int_{\R^6}\phi(t,x,v) F_2(t,x,v)dxdv=0,\]
since $F_2$ verifies \eqref{EqF2}. We end the proof. \end{proof}

\noindent $\bullet$ {\it Comments on {\it Strichartz-Scaled Boltzmann equation}  \eqref{DefiSSBE}}.  Several comments are in order:

\underline{(1).} Based on the reasoning presented in {\it Step 1} of Subsection \ref{DerivSSBE}, we observe that the original equation closely resembles the $F_2$-equation \eqref{EqF2} after normalizing the mass, the momentum and the center of mass of the given  {\it Traveling Maxwellian} $\mathcal{M}$. Therefore,
we can assume, without loss of generality, that the given {\it Traveling Maxwellian} $\mathcal{M}$ satisfies the conditions $\Phi_1(\mathcal{M})=1$ and $\Phi_2(\mathcal{M})=\Phi_3(\mathcal{M})=0$. In other words, we only need to consider the {\it Traveling Maxwellian} $\mathcal{M}$ in the form \eqref{ExformofM2}.
\smallskip

\underline{(2).} Let $G$ satisfy {\it Strichartz-Scaled Boltzmann equation}  \eqref{DefiSSBE} associate to  the {\it Traveling Maxwellian} $\mathcal{M}$ in the form \eqref{ExformofM2}. On one hand, by \eqref{connectionGF2}, we easily derive that 
\beno \|G(t)\|_{L^p_xL^q_v}\sim (t+1)^{3(\f1q-\f1p)}\|F_2(t)\|_{L^p_xL^q_v}. \eeno
On the other hand, we recall that the  Strichartz estimates for the free transport equation asserts that
\beno (t+1)^{3(\f1q-\f1p)}\|f(t)\|_{L^p_xL^q_v}\le\|f_0\|_{L^q_xL^p_v},  \eeno
where  $f(t,x,v)=f_0(x-tv,v)$.  
These observations indicate that the dispersion effect caused by the transport operator has been incorporated into \eqref{DefiSSBE} through the change of variables. This is the reason why we refer to it as the{\it Strichartz-Scaled Boltzmann equation}.
\smallskip

\underline{(3)}. In the context of the {\it Strichartz-Scaled Boltzmann equation} \eqref{DefiSSBE}, we can classify it into two cases based on the integrable time decay of the factor $\tau(t)^{-3 - \gamma}$ that appears on the right-hand side of the equation. This classification gives rise to the following definition:

\begin{definition} If $\tau(t)^{-3 - \gamma}$ is integrable (i.e., $\gamma > -2$), we refer to equation \eqref{DefiSSBE} as being in the {\it``weak collision regime''}. On the other hand, if $\tau(t)^{-3 - \gamma}$ is not integrable (i.e., $\gamma \in ]-3, -2]$), we classify equation \eqref{DefiSSBE} as being in the {\it ``strong collision regime''}.
\end{definition}

Obviously, in the  {\it ``strong collision regime''}, the collision effect prevails the dispersion effect by comparing the factors $\tau^{-3-\gamma}$ and $\tau^{-2}$ before the collision operator $\mathsf{Q}$ and the   transport operator $\mathsf{T}$. 
 Thus the relative entropy $\mathcal{H}[G|\mathbf{M}]$ and the $H$-theorem   will play the essential role in the long-time dynamics of the solution(see the progress in \cite{HJL}). 
 
 \smallskip
\underline{(4)}. Since vacuum is another stationary solutions to \eqref{DefiSSBE}, by \eqref{connectionGF2},  our strategy can also be applied to consider the global  stability of   vacuum state and the associated scattering theory in $L^p$(with $p>1$).


	\subsection{Notations and main results} 

	\subsubsection{Notations} We first list some notations used throughout the paper.

	\noindent$\bullet$ The bracket $\langle \cdot \rangle$ is defined by $\langle \cdot \rangle := \sqrt{1 + |\cdot|^2}$.  $1_A$ is the characteristic function of a set $A$.

	\noindent$\bullet$  $a \lesssim b$ is used  to indicate that there is a uniform constant $C$, which may be different on different lines, such that $a \le Cb$.  $a \gtrsim b$ means $b\lesssim a$.  If both $a\lesssim b$ and $b \lesssim a$, we write $a\sim b$.

	\noindent$\bullet$ For $f = f(x, v), g = g(x, v)$, notation $\langle f, g \rangle := \int f(x, v)g(x, v)dxdv$ is used to denote the inner product for $x, v$ variables. And $\langle \cdot, \cdot \rangle_v$ denotes the inner product for $v$ variable only.

	\noindent$\bullet$ If $A, B$ are two operators, then their commutator is defined by $[A, B] := AB - BA$.

	\noindent$\bullet$ The notation $\mathfrak{r} := \max\{\gamma + 2s, 0\}$ will be frequently used in the proof.

 \noindent$\bullet$ We denote $C(\lambda_1,\lambda_2,\cdots, \lambda_n)$ or $C_{\lambda_1,\lambda_2,\cdots, \lambda_n}$  by a constant depending on $\lambda_1,\lambda_2,\cdots, \lambda_n$.

\noindent  $\bullet$ For a multi-index
$\alpha =(\alpha_1,\alpha_2,\alpha_3) \in \mathbb{N}^{3}$, define
$|\alpha|:=    \alpha_1+\alpha_2+\alpha_3$. For  $\alpha \in \mathbb{N}^{3}$
denote $\partial^{\alpha}:=   \partial^{\alpha}_{x}$.

\noindent $\bullet$ For $l \geq 0$ and a function $f(v)$ on $\mathbb{R}^{3}$, define
\ben \label{l2-l-not-like-before}
|f|_{L^{2}_{l}}^{2} := |f\lr{\cdot}^l|_{L^{2}}^{2}, \quad |f|_{L^{2}}=|f|_{L^{2}_{0}}.
\een

\noindent $\bullet$
For $p,q\in[1,\infty], l\in\R$ and a function $f(x,v)$ on $\mathbb{R}^{3}\times \mathbb{R}^{3}$, define
\beno \|f\|_{L^p_xL^q_l}:=\bigg(\int_{\R^3} \big(\int_{\R^3} |f\lr{v}^l|^q dv\big)^{\f{p}{q}} dx\bigg)^{\f1{p}}.\eeno

\noindent $\bullet$
For $m \in \mathbb{N}, s,l\in \R$ and a function $f(x,v)$ on $\mathbb{R}^{3}\times \mathbb{R}^{3}$, define
\ben \label{not-mix-x-v-norm-energy}
\|f\|_{H^{m}_{x}H^{s}_{l}}^{2} :=      \sum_{|\alpha| \leq m}\|\pa^\alpha_x\lr{D_v}^sf\|_{L^2_xL^2_l}  , \quad \|f\|_{L^{2}_{x}L^{2}_{l}} :=    \|f\|_{H^{0}_{x}L^{2}_{l}}, \quad \|f\|_{H^{m}_{x}L^{2}}:=  \|f\|_{H^{m}_{x}L^{2}_{0}},
\een
where $\lr{D_v}^s f:=\int e^{2\pi i(v-u)\xi} \lr{\xi}^s f(u)dud\xi$.
\medskip

\subsubsection{Main results} Before stating our main results, we   address again several things:
\smallskip

\noindent $\bullet$ The questions $(Q1)$ and $(Q2)$ regarding the general  {\it Traveling Maxwellian} can be simplified to the form presented in \eqref{ExformofM2} using the reasoning outlined in {\it Step 1} of Subsection \ref{DerivSSBE}. Without loss of generality, from this point forward, we assume that the  {\it Traveling Maxwellian} $\mathcal{M}$ satisfies $\Phi_1(\mathcal{M})=1$ and $\Phi_2(\mathcal{M})=\Phi_3(\mathcal{M})=0$.

\noindent $\bullet$ Our strategy is based on the basic energy method, which is highly robust. Due to the presence of the integrable time decay factor $\tau(t)^{-2}$ before the transport operator $\mathsf{T}$ in \eqref{DefiSSBE}, we can simplify the form \eqref{ExformofM2} further by assuming $m=a=c=1$, $b=0$, $A=0$, and $B=I$. In other words, we only need to consider the typical  {\it Traveling Maxwellian} given by:
\[\mathcal{M}=(2\pi)^{-3}e^{-\frac{1}{2}(|x-tv|^2+|v|^2)}. \]
Under this assumption, the {\it Strichartz-Scaled Boltzmann equation} becomes:
\ben\label{DefiSSBES}
 \partial_tG + \lr{t}^{-2}\mathsf{T}G = \lr{t}^{-3 - \gamma}\mathsf{Q}(G, G),
 \een
 with
\ben\label{TtauS}\mathsf{T}=v\cdot\na_x-x\cdot\na_v, \quad \tau(t)=\lr{t}.\een
Furthermore, in this specific situation, the equation \eqref{EqF2} for $F_2$ coincides with the original equation \eqref{Boltzmann}. Thus, through \eqref{connectionGF2}, the solution $G$ to \eqref{DefiSSBES} and the solution $F$ to \eqref{Boltzmann} satisfy:
\ben\label{GtoF2S} G(t, x, v) =  F(t, \langle t \rangle x, \frac{tx + v}{\langle t \rangle}), \quad F(t,x,v)=G(t,\frac{x}{\langle t \rangle}, \langle t \rangle v - \frac{t}{\langle t \rangle}x). \een

	\smallskip

 \noindent$\bullet$ By utilizing \eqref{connectionGF2}, we can reduce the global stability analysis of the vacuum state to the original equation \eqref{Boltzmann} to the same problem for \eqref{DefiSSBE}. 
\smallskip

Before stating our main results, we introduce some additional notations:

 \noindent$\bullet$ Let $z := (x, v) \in \mathbb{R}^6,  \langle z \rangle := \sqrt{|x|^2 + |v|^2 + 1}$. It is easy to check that $\mathsf{T}\lr{z}=0$.

 \noindent$\bullet$ We introduce two vector fields $\mathsf{X}$ and $\mathsf{Y}$, which commute with $\mathsf{T}$(\eqref{TtauS}), \ben\label{DefiXY}
		\mathsf{X} = \frac{\partial_x - t\partial_v}{\langle t \rangle}, \:\:\: \mathsf{Y} = \frac{t\partial_x + \partial_v}{\langle t \rangle},\een 
  which implies that
	\begin{equation}\label{Upperxv}
		\partial_x = \frac{\mathsf{X} + t\mathsf{Y}}{\langle t \rangle}, \:\:\: \partial_v = \frac{\mathsf{Y} - t\mathsf{X}}{\langle t \rangle}.
	\end{equation}

 \noindent$\bullet$	Let $\mathsf{D}: = (\mathsf{X}, \mathsf{Y})$.  If $\mathsf{D}^\alpha:=X_1^{\alpha_1}X_2^{\alpha_2}X_3^{\alpha_3}Y_1^{\alpha_4}Y_2^{\alpha_5}Y_3^{\alpha_6}$ with $\alpha:=(\alpha_1,\cdots,\alpha_6)\in \N^6$, it holds that
	\begin{equation}\label{QDerivation}
		\mathsf{D}^\alpha\mathsf{Q}(F, G) = \sum_{\beta_1 + \beta_2 = \alpha}C_\alpha^{\beta_1}\mathsf{Q}(\mathsf{D}^{\beta_1}F, \mathsf{D}^{\beta_2}G).
	\end{equation}

 \noindent$\bullet$ Suppose $G$ and $F$ satisfy \eqref{GtoF2S}. Then \begin{equation}\label{FandG}
		\|\langle z \rangle^n\mathsf{X}^\alpha\mathsf{Y}^\beta G\|_{L^2_xL^2} = \|(|x - tv|^2 + |v|^2 + 1)^{n/2}\partial_x^\alpha(t\partial_x + \partial_v)^\beta F\|_{L^2_xL^2}.
	\end{equation}

\noindent$\bullet$	In what follows, we will use the following function spaces:

\underline{(1).}(Energy space) Let $m \in \mathbb{N}$ satisfy $(i)$. $(1 - s)m \ge 2, m \ge 4$; $(ii)$.  if $s < 1/2$, $(1-2s)m \ge 1$. We define:
\ben\label{FSGWP}
		|\!|\!|f(t)|\!|\!|_{N}^2 := \sum_{n = 1}^N\sum_{|\alpha| \le N - n}\|\langle z \rangle^{mn}\mathsf{D}^\alpha f(t)\|_{L^2_xL^2}^2.\een

\underline{(2).}(Analytic spaces) Let $ \delta\in]0, (1-3s)/(2s)], \:\:\: T_\gamma := \int_0^\infty\langle t \rangle^{-3 - \gamma}dt < +\infty$ with $s \in (0, 1/3),  \gamma \in (-2, -2s]$. 
 	We define two types of analytic norms:
\ben\label{FSAN}	\|f(t)\|_{AN}^2:=\sum_{\alpha\in\N^6}  \frac{q(t)^{2(|\alpha| + 1)}}{(|\alpha|!)^{2 + 2\delta}}\|\langle z \rangle^4\mathsf{D}^\alpha f(t)\|_{L^2_xL^2}^2,\\
\|f(t)\|_{MA}^2:=\sum_{\alpha\in\N^6} \frac{p(t)^{2(|\alpha| + 1)}}{(|\alpha|!)^{2 + 2\delta}}\|\langle z \rangle^4\mathsf{D}^\alpha f(t)\|_{L_x^2L^2}^2\label{FSMA}\een
 where \ben\label{Defiqt}
		  q(t) := (4T_\gamma)^{-1}(2T_\gamma - \int_0^t\langle \eta \rangle^{-3 - \gamma}d\eta) + \frac{1}{2} \in (\frac{3}{4}, 1];\\
		 p(t) := (4T_\gamma)^{-1}\int_0^t\langle \eta \rangle^{-3 - \gamma}d\eta + \frac{1}{4} \in [\frac{1}{4}, \frac{1}{2}).\label{Defipt}
	\een


 \smallskip
	Our first result is concerned about the global well-posedness and propagation of regularity of \eqref{DefiSSBE}.  
	\begin{theorem}[Strichartz-Scaled Boltzmann equation]\label{GWPPRSSBE} 
		Let $\gamma\in]-2,0[$ and consider the equation \eqref{DefiSSBES}  with the   initial data $G(0):=G_0\ge0$. 

	\underline{(i).(Global-wellposedness and Scattering theory)} Suppose that  $|\!|\!|(G-\mathbf{M})(0)|\!|\!|_{6}^2\le  \varepsilon_0$  with $\varepsilon_0\ll1$. Then \eqref{DefiSSBES} admits a unique and global solution $G=G(t,x,v)$satisfying that $G\ge0$ and \ben\label{EstGWP}\sup_{t\in[0,\infty[}|\!|\!|(G-\mathbf{M})(t)|\!|\!|_{6}^2\lesssim \varepsilon_0. \een
Moreover, $G\in C([0,\infty[;L^1_{x,v})$ and there exists a stationary function $G_\infty=G_\infty(x,v)$(see \eqref{DefiGinfty})  such that
\ben\label{scatteringL1} \|G(t)-G_\infty\|_{L^1_{x,v}}\lesssim \max\{\lr{t}^{-1}, \lr{t}^{-\gamma-2}\}. \een

 \underline{(ii).(Propagation of regularity)} Let $s \in (0, 1/3),  \gamma \in (-2, -2s]$. Suppose that   
 \[ 0<\varepsilon_2:=\|(G-\mathbf{M})(0)\|^2_{MA}\le \|(G-\mathbf{M})(0)\|^2_{AN}\le \varepsilon_1.\] Then  the solution  $G=G(t,x,v)$ to \eqref{DefiSSBES} satisfies that for $t>0$,
 \ben\label{EstPRLB}  \varepsilon_2\lesssim\|(G-\mathbf{M})(t)\|^2_{MA}\le \|(G-\mathbf{M})(t)\|^2_{AN}\lesssim \varepsilon_1\ll1.\een
 Moreover, $G_\infty\neq\mathbf{M}$.
	\end{theorem}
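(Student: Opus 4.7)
Setting $g := G - \mathbf{M}$, I use the two structural identities $\mathsf{T}\mathbf{M}=0$ (since $\mathbf{M}$ depends on $(x,v)$ only through $|z|^2$ and $\mathsf{T}\langle z\rangle=0$) and $\mathsf{Q}(\mathbf{M},\mathbf{M})=0$ to rewrite the equation as
\begin{equation*}
\partial_t g + \langle t\rangle^{-2}\mathsf{T} g = \langle t\rangle^{-3-\gamma}\bigl[\mathsf{Q}(\mathbf{M},g)+\mathsf{Q}(g,\mathbf{M})+\mathsf{Q}(g,g)\bigr].
\end{equation*}
Two facts drive everything: (a) $[\mathsf{T},\mathsf{D}^\alpha]=0$ and $\mathsf{T}\langle z\rangle=0$, so when tested against $\langle z\rangle^{2mn}\mathsf{D}^\alpha g$ the transport contribution vanishes identically and never enters the energy identity; (b) $\gamma>-2$ makes $\langle t\rangle^{-3-\gamma}$ integrable on $[0,\infty)$, which is the only ``smallness reservoir'' available.

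\textbf{Proof of (i).} For global well-posedness I would run a continuation/bootstrap on $|\!|\!|g|\!|\!|_6^2$. Applying $\mathsf{D}^\alpha$, testing against $\langle z\rangle^{2mn}\mathsf{D}^\alpha g$, using \eqref{QDerivation} and the standard weighted non-cutoff trilinear upper bounds in the $H^s_\ell$-scale (the Maxwellian factor supplies exponential velocity weights whenever it occupies a slot), I expect to derive
\begin{equation*}
\tfrac{d}{dt}|\!|\!|g(t)|\!|\!|_6^2 \lesssim \langle t\rangle^{-3-\gamma}\bigl(1+|\!|\!|g(t)|\!|\!|_6\bigr)|\!|\!|g(t)|\!|\!|_6^2,
\end{equation*}
and close \eqref{EstGWP} for $\varepsilon_0$ small by integrating in time against $\int_0^\infty\langle t\rangle^{-3-\gamma}dt<\infty$. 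Non-negativity of $G$ is preserved via a standard approximation argument on the non-negative form of $\mathsf{Q}$. For $L^1$-scattering I would use the mild form
\begin{equation*}
G(t_2)-G(t_1)=\int_{t_1}^{t_2}\bigl[-\langle s\rangle^{-2}\mathsf{T}G+\langle s\rangle^{-3-\gamma}\mathsf{Q}(G,G)\bigr]ds,
\end{equation*}
controlling both $\|\mathsf{T}G\|_{L^1_{x,v}}$ and $\|\mathsf{Q}(G,G)\|_{L^1_{x,v}}$ by the weighted $L^2$-energy through Cauchy--Schwarz against $\langle z\rangle^{-m}$. Integrability of the two factors yields a Cauchy property in $L^1_{x,v}$ and hence the limit $G_\infty$; reading off $\int_t^\infty\langle s\rangle^{-2}ds\sim\langle t\rangle^{-1}$ and $\int_t^\infty\langle s\rangle^{-3-\gamma}ds\sim\langle t\rangle^{-2-\gamma}$ gives \eqref{scatteringL1}.

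\textbf{Proof of (ii).} For the upper bound I differentiate $\|g(t)\|_{AN}^2$ in time. Since $\dot q(t)<0$ with $|\dot q(t)|\sim\langle t\rangle^{-3-\gamma}$, the time derivative produces the favorable term
\begin{equation*}
\sum_{\alpha}\frac{2(|\alpha|+1)\,\dot q(t)\,q(t)^{2|\alpha|+1}}{(|\alpha|!)^{2+2\delta}}\|\langle z\rangle^4\mathsf{D}^\alpha g(t)\|_{L^2_xL^2}^2 \le 0,
\end{equation*}
whose $(|\alpha|+1)$ gain matches exactly the polynomial loss produced by redistributing derivatives in \eqref{QDerivation} after the non-cutoff weighted trilinear estimate. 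The choice $\delta\in(0,(1-3s)/(2s)]$ is dictated by the requirement that $(|\alpha|!)^{-2-2\delta}$ dominates the combinatorial factors $C_\alpha^{\beta_1}$ in \eqref{QDerivation}, so that the favorable term absorbs the trilinear contribution, closing the upper bound in \eqref{EstPRLB}. The lower bound on $\|g\|_{MA}^2$ is the mirror image: $\dot p(t)>0$ of the same size produces a term of the opposite sign, but since only a lower bound is sought, I estimate the bad trilinear contributions from above by $\varepsilon_1\|g(t)\|_{MA}^2$ and exploit smallness of $\varepsilon_1$ to maintain $\|g(t)\|_{MA}^2\gtrsim \varepsilon_2$ throughout. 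Finally, $G_\infty\neq\mathbf{M}$: as $t\to\infty$, both $q(t),p(t)\to 1/2$, so the two functionals share the same limiting form; the uniform $AN$-bound provides a dominated-convergence majorant permitting passage to the limit inside the $\alpha$-sum, and the persisting lower bound $\gtrsim\varepsilon_2$ then forces $g(\infty)\not\equiv 0$, contradicting $G_\infty=\mathbf{M}$.

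\textbf{Main obstacle.} The technical heart is the analytic-norm estimate: tracking how the combinatorial factor $C_\alpha^{\beta_1}$ in \eqref{QDerivation} compounds with the factorial weight $(|\alpha|!)^{-2-2\delta}$ after the weighted non-cutoff trilinear bound, and balancing this against the $(|\alpha|+1)$-linear gain from $\dot q$ (resp.\ $\dot p$). The constraints $s<1/3$ and $\delta\le(1-3s)/(2s)$ are exactly what this combinatorial balance permits, and getting the lower-bound side of the $MA$ estimate to persist for all time (rather than decaying) is the most delicate point.
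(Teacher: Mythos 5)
Your overall architecture matches the paper's: the same perturbation $g = G - \mathbf{M}$, the same exploitation of $[\mathsf{T},\mathsf{D}^\alpha]=0$ and $\mathsf{T}\langle z\rangle=0$, the same integrability of $\langle t\rangle^{-3-\gamma}$ as the smallness reservoir, the same $L^1$-Cauchy argument for scattering, and the same device of burning the sign of $\dot q$ (resp.\ $\dot p$) to absorb (resp.\ concede) the $(|\alpha|+1)$ combinatorial loss. The constraint $\delta\le(1-3s)/(2s)$ in fact enters through a resummation/interpolation lemma (the paper's Lemma~4.2, which needs $2+2\delta\le(1-s)/s$) rather than directly from the combinatorics of $C_\alpha^{\beta_1}$, but that is a matter of bookkeeping, not a gap.

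The genuine error is in the final step. You assert that $q(t),p(t)\to 1/2$ as $t\to\infty$ and that ``the two functionals share the same limiting form.'' That is false: from \eqref{Defiqt}--\eqref{Defipt}, $q(t)\downarrow 3/4$ while $p(t)\uparrow 1/2$, so $q$ and $p$ stay separated, and the uniform bound $p(t)/q(t)\le 2/3<1$ is precisely what the paper's contradiction argument runs on. The paper assumes $G_\infty=\mathbf{M}$, deduces $\|\langle z\rangle^4 g(t)\|_{L^1_{x,v}}\to 0$, and then splits $\|g(t)\|_{MA}^2$ into low modes $|\alpha|\le N$ (which vanish by interpolating the weighted Sobolev norm between the bounded $AN$-norm and the decaying weighted $L^1$-norm) and high modes $|\alpha|>N$ (which are $\lesssim(2/3)^{2N}$ because each $M_\alpha$-term is $(p/q)^{2(|\alpha|+1)}$ times the corresponding $A_\alpha$-term). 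Choosing $N$ large first and then $t$ large drives $\|g(t)\|_{MA}$ below $\varepsilon_2$, a contradiction. Your ``dominated convergence'' phrasing can be salvaged --- a summable majorant exists precisely \emph{because} $p/q<1$ uniformly (not because the limits coincide), and the pointwise vanishing of each term still requires the interpolation step against the decaying $L^1$-norm --- but as written, your stated rationale (``same limiting form'') is wrong, and if $q$ and $p$ actually did share a limit, the argument would degenerate: the upper bound on $\|\cdot\|_{AN}$ and lower bound on $\|\cdot\|_{MA}$ would not by themselves forbid the mass of $g$ from migrating to arbitrarily high $\alpha$ while vanishing in $L^1$.
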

\begin{remark} We recall that the collision operator $\mathsf{Q}$, in the vicinity of $\mu=(2\pi)^{-3/2}e^{-\frac{1}{2}|v|^2}$, exhibits the following behavior: 
\beno
-\mathsf{Q}(\mu,g) \sim (-\triangle_v)^s(\langle v\rangle^\gamma g) + (-\triangle_{\mathbb{S}^2})^s(\langle v\rangle^\gamma g) + (v\cdot \nabla_v)(\langle v\rangle^\gamma g) + \text{L.O.T.},
\eeno
where $L.O.T.$ denotes lower-order terms. As a direct consequence, the anisotropic structure leads to a loss of weight in the $v$ variable, specifically the factor $\langle v\rangle^{\gamma+2s}$, when attempting to establish an upper bound for the collision operator in weighted Sobolev spaces. This motivates us to introduce the energy norm $|\!|\!|\cdot|\!|\!|_{N}$ to overcome this difficulty, particularly in the case where $\gamma+2s>0$. However, for $\gamma+2s\le 0$, we can simplify the energy norm $|\!|\!|\cdot|\!|\!|_{N}$ as follows:
	\[ \!|\!|f(t)|\!|\!\, _{N}^2 := \sum_{|\alpha|=0}^N \|\langle z \rangle^{4}\mathsf{D}^\alpha f(t)\|_{L^2_xL^2}^2,\] 
	which applies to the Cauchy problem of \eqref{DefiSSBES}. This reduction of the norm is primarily driven by the fact that we are interested in proving the propagation of smoothness using the norm $\|\cdot\|_{AN}$.
\end{remark}
Now we are in a position to recast the results in Theorem \ref{GWPPRSSBE} to the original equation \eqref{Boltzmann}.

\begin{theorem}[Boltzmann equation]\label{StabilityBE} Let $\mathcal{M}=(2\pi)^{-3}e^{-\frac{1}{2}(|x-tv|^2-|v|^2)}$ and $\gamma\in]-2,0[$.   Consider the equation \eqref{Boltzmann}  with the initial data $F(0):=F_0\ge0$. 

	\underline{(i).(Lyapunov stability and Scattering theory)} Suppose that $F_0$ satisfies that \[\sum_{n = 1}^6\sum_{|\alpha + \beta| \le 6 - n}\|(|x|^2 + |v|^2 + 1)^{mn/2}\partial_x^\alpha\partial_v^\beta(F-\mathcal{M})(0)\|_{L^2_xL^2}^2 \le \varepsilon_0\] with $\varepsilon_0\ll1$. Then \eqref{Boltzmann} admits a unique and global solution $F=F(t,x,v)$ satisfying that $F\ge0$ and  \[\sup_{t\in[0,\infty[}\bigg(\sum_{n = 1}^6\sum_{|\alpha + \beta| \le 6 - n}\|(|x-tv|^2 + |v|^2 + 1)^{mn/2}\partial_x^\alpha(t\pa_x+\partial_v)^\beta(F - \mathcal{M})(t)\|_{L^2_xL^2}^2\bigg) \lesssim \varepsilon_0.\] 
Moreover, there exists a stationary function $G_\infty=G_\infty(x,v)$ such that
\ben\label{scatteringL11} \int_{\R^6}|F(t)- G_\infty(\frac{x}{\langle t \rangle}, \langle t \rangle v - \frac{t}{\langle t \rangle}x)|dxdv\lesssim \max\{\lr{t}^{-1}, \lr{t}^{-\gamma-2}\}. \een

 \underline{(ii).($\mathcal{M}$ is {\it not} asymptotic stable)} Let $s \in (0, 1/3),  \gamma \in (-2, -2s]$. Suppose that  
 \[ 0<\varepsilon_2:=\|(F-\mathcal{M})(0)\|^2_{MA}\le \|(F-\mathcal{M})(0)\|^2_{AN}\le \varepsilon_1.\] Then there exists a universal constant $c=c(F_0)$ such that the solution  $F=F(t,x,v)$ to \eqref{Boltzmann} satisfies 
 \ben\label{FMdistance}  \inf_{t\in[0,\infty[}\|(F-\mathcal{M})(t)\|_{L^1_{x,v}}\ge c>0.\een
 which implies that for all $t>0$, $\mathcal{H}[F|\mathcal{M}](t)\ge c^2>0$.
\end{theorem}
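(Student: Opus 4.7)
The plan is to deduce both parts of Theorem \ref{StabilityBE} from Theorem \ref{GWPPRSSBE} via the change of variables \eqref{GtoF2S} and the norm identity \eqref{FandG}. I first record that the map $\Psi_t:(x,v)\mapsto(y,u):=(x/\lr{t},\lr{t}v-tx/\lr{t})$ has block-triangular Jacobian of determinant $1$ (so $dx\,dv=dy\,du$), and a direct computation using $\lr{t}^2-t^2=1$ gives $|\lr{t}y-t(ty+u)/\lr{t}|^2+|(ty+u)/\lr{t}|^2=|y|^2+|u|^2$, so $\Psi_t$ sends $\mathcal{M}(t,\cdot,\cdot)$ to $\mathbf{M}$. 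Consequently $\|F(t)-\mathcal{M}(t)\|_{L^1_{x,v}}=\|G(t)-\mathbf{M}\|_{L^1_{x,v}}$. At $t=0$ we have $G_0=F_0$, $\mathbf{M}=\mathcal{M}(0)$, and $\mathsf{X}=\pa_x$, $\mathsf{Y}=\pa_v$, $\lr{z}^2=|x|^2+|v|^2+1$, so \eqref{FandG} directly translates the hypothesis of part (i) into $|\!|\!|(G-\mathbf{M})(0)|\!|\!|_6^2\le\varepsilon_0$ and the analytic hypothesis of part (ii) verbatim into the analytic data for $G_0-\mathbf{M}$.

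For part (i), Theorem \ref{GWPPRSSBE}(i) produces a unique global $G\ge 0$ satisfying \eqref{EstGWP}; applying \eqref{FandG} for $t>0$ converts this into the claimed bound on derivatives of $F-\mathcal{M}$ with respect to $\pa_x^\alpha(t\pa_x+\pa_v)^\beta$ weighted by $(|x-tv|^2+|v|^2+1)^{mn/2}$. The scattering bound \eqref{scatteringL11} follows from \eqref{scatteringL1} by the substitution $(y,u)=\Psi_t(x,v)$, which identifies the left-hand side of \eqref{scatteringL11} with $\|G(t)-G_\infty\|_{L^1_{y,u}}$, whose decay rate is given by \eqref{scatteringL1}.

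For part (ii), Theorem \ref{GWPPRSSBE}(ii) supplies a scattering profile $G_\infty\neq\mathbf{M}$ with $G(t)\to G_\infty$ in $L^1_{x,v}$. Since $\Psi_t$ is an $L^1$-isometry it suffices to establish $\inf_{t\ge 0}\|G(t)-\mathbf{M}\|_{L^1_{x,v}}\ge c>0$, which I would argue by a dichotomy. Choose $T_0$ so that $\|G(t)-G_\infty\|_{L^1}\le\tfrac12\|G_\infty-\mathbf{M}\|_{L^1}$ for $t\ge T_0$ (possible by \eqref{scatteringL1}); then
\[
\|G(t)-\mathbf{M}\|_{L^1_{x,v}}\ge\|G_\infty-\mathbf{M}\|_{L^1_{x,v}}-\|G(t)-G_\infty\|_{L^1_{x,v}}\ge\tfrac12\|G_\infty-\mathbf{M}\|_{L^1_{x,v}}>0.
\]
On $[0,T_0]$, $t\mapsto\|G(t)-\mathbf{M}\|_{L^1_{x,v}}$ is continuous (from $G\in C([0,\infty);L^1_{x,v})$) and strictly positive pointwise: if $G(t^*)=\mathbf{M}$ at some $t^*$, then since $\mathsf{T}\mathbf{M}=0$ and $\mathsf{Q}(\mathbf{M},\mathbf{M})=0$ make $\mathbf{M}$ a stationary solution of \eqref{DefiSSBES}, uniqueness from Theorem \ref{GWPPRSSBE}(i) forces $G\equiv\mathbf{M}$ on $[t^*,\infty)$, so $G_\infty=\mathbf{M}$, contradicting Theorem \ref{GWPPRSSBE}(ii). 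Compactness of $[0,T_0]$ yields a positive minimum, whence \eqref{FMdistance}. The entropy lower bound is then immediate from the Csisz\'ar--Kullback--Pinsker inequality $\|F-\mathcal{M}\|_{L^1}^2\le 2\mathcal{H}[F|\mathcal{M}]$ (valid since $\Phi_1(F)=\Phi_1(\mathcal{M})=1$). The only genuine ingredient is the nonequality $G_\infty\neq\mathbf{M}$, supplied by Theorem \ref{GWPPRSSBE}(ii); everything else reduces to the above Strichartz-type coordinate change.
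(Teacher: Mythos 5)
Your argument mirrors the paper's proof in both structure and content. Part (i) is, as you say, an immediate translation of Theorem \ref{GWPPRSSBE}(i) through \eqref{GtoF2S}--\eqref{FandG}, with the added (correct) bookkeeping that the change of variables is an $L^1$-isometry carrying $\mathcal{M}(t,\cdot,\cdot)$ to $\mathbf{M}$. For part (ii) you, like the paper, reduce \eqref{FMdistance} to showing $\inf_{t\ge 0}\|G(t)-\mathbf{M}\|_{L^1_{x,v}}>0$ by combining the scattering estimate \eqref{scatteringL1}, the nondegeneracy $G_\infty\neq\mathbf{M}$ from Theorem \ref{GWPPRSSBE}(ii), continuity of $t\mapsto G(t)$ in $L^1_{x,v}$, and the observation that $G(t^*)=\mathbf{M}$ at some finite $t^*$ forces $G\equiv\mathbf{M}$ on $[t^*,\infty)$. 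The one genuine (if small) divergence is how you justify this last point: you invoke forward-in-time uniqueness for \eqref{DefiSSBES} with $\mathbf{M}$ as a stationary solution, whereas the paper invokes the $H$-theorem of Proposition \ref{ConsHthSSBE}(ii), arguing that $\mathcal{H}[G|\mathbf{M}](T_*)=0$ plus monotone decay and nonnegativity of $\mathcal{H}$ force $\mathcal{H}\equiv 0$, hence $G\equiv\mathbf{M}$, on $[T_*,\infty)$. Both routes are valid; uniqueness is arguably more elementary, while the entropy route stays within the framework the paper emphasizes. Your direct dichotomy with compactness on $[0,T_0]$ versus the paper's contradiction-by-sequence is merely stylistic. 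Finally, for the closing implication the paper simply asserts $\mathcal{H}[F|\mathcal{M}](t)\ge c^2$; your appeal to a Csisz\'ar--Kullback--Pinsker type inequality is the right mechanism, though it yields $\mathcal{H}\gtrsim c^2$ rather than literally $c^2$ — immaterial since $c$ is an unspecified positive constant.
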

 
\subsubsection{Comments on the theorems} As we explained at the beginning of this subsection, for the non-cutoff equation, Theorem \ref{GWPPRSSBE} and Theorem \ref{StabilityBE} demonstrate that all the Traveling Maxwellians $\mathcal{M}$, as defined in Definition \ref{deTM}, are Lyapunov stable. Furthermore, if $s \in ]0, \frac{1}{3}[$ and $\gamma\in]-2,2s]$, then all the Traveling Maxwellians $\mathcal{M}$ are not asymptotic  stable.
\smallskip

\underline{(1).} {\it Comment on the Lyapunov stability.} The analysis of Lyapunov stability for the {\it Traveling Maxwellians} $\mathcal{M}$ in relation to \eqref{Boltzmann} can be reduced to establishing the Lyapunov stability for the stationary solution $\mathbf{M}$ to \eqref{DefiSSBES}. In this regard, we can fully exploit the presence of the time decay factors $\lr{t}^{-2}$ and $\lr{t}^{-3-\gamma}$ appearing in \eqref{DefiSSBES}. Notably, our stability result does not rely on the dissipation property of the non-cutoff collision operator. As a result, our stability analysis can also be applied to prove $(Q1)$ for the cutoff equation, thereby removing the restriction $m<1$ imposed in \cite{BGGL}.
\smallskip

\underline{(2).} {\it Comment on the scattering theory.}  Our scattering theory in the $L^1_{x,v}$ space represents a novel development compared to the previous result. This advancement lies in our ability to clarify that the perturbed solution $F$ converges to another entity known as a {\it Traveling wave} $G_\infty(\frac{x}{\langle t \rangle}, \langle t \rangle v - \frac{t}{\langle t \rangle}x)$, accompanied by an explicit convergence rate.    Moreover, the traveling speed of $G_\infty$ is totally determined by $\Phi(F)$. To be more precise, if the solution $F$ is a perturbed solution of $\mathcal{M}$ satisfying that $\Phi(F)=\Phi(\mathcal{M})$, then by \eqref{connectionF4F}, \eqref{scatteringL1} turns to be  
\[\int_{\R^6}\bigg|F(t)-m(\det B)G_\infty\big(\frac{B(x - y - tu)}{\tau}, \tau(v - u)
 + \frac{(b - ct + A)(x - y - tu)}{\tau}\big)\bigg|dxdv\lesssim \max\{\lr{t}^{-1}, \lr{t}^{-\gamma-2}\}. \] Here $(m,u,y)=(\Phi_1(F),m^{-1}\Phi_2(F),m^{-1}\Phi_3(F))$ and $(A,B,\tau,b,c)$ is determined by $\Phi(F)$ through \eqref{ExformofM2}.

\underline{(3).} {\it Comment on the result that $\mathcal{M}$ is not asymptotic stable.} We have: 

$\bullet$ For the {\it Strichartz-Scaled Boltzmann equation}, we prove result $(ii)$ in Theorem \ref{GWPPRSSBE} by two steps. Firstly, we demonstrate the propagation of the analytic norm $\|\cdot\|_{AN}$. Subsequently, we utilize this result to establish the propagation of the lower bound of the analytic norm $\|\cdot\|_{MA}$. These steps involve the construction of appropriate functions $p(t)$ and $q(t)$, as defined in (\ref{Defipt}-\ref{Defiqt}). As a direct consequence, we conclude that $G_\infty$ does not equal   $\mathbf{M}$.
\smallskip

$\bullet$ By \eqref{GtoF2S}, result $(ii)$ in Theorem \ref{GWPPRSSBE} implies that for the original equation \eqref{Boltzmann},  $F$ consistently maintains a distance from $\mathcal{M}$ throughout the entire duration within the $L^1_{x,v}$ framework.  Therefore, we give a negative answer to $(Q_2)$. 
\smallskip

$\bullet$ In contrast to the findings in \cite{BGGL} regarding the cutoff equation, we are unable to establish a proof that any perturbed solution will maintain a distance from the corresponding {\it Traveling Maxwellian} $\mathcal{M}$  as initially prescribed. The primary reason for this limitation lies in the dissipation from the non-cutoff collision operator, which serves as the primary hurdle in demonstrating the desired outcome. This is the main motivation to consider the problem in the analytic function spaces defined in (\ref{FSAN}-\ref{FSMA}).



	\section{Proof of Theorem \ref{GWPPRSSBE}: global well-posedness and the scattering theory}  

  This section is devoted to the detailed proof of the global well-posedness and the scattering theory to the {\it Strichartz-Scaled Boltzmann equation}. We divide the proof into several parts. 

\subsection{Reformulation of equation \eqref{DefiSSBES}} We rewrite the equation \eqref{DefiSSBES} in the perturbation framework. To do it, we set $g := G - \mathbf{M}$. Then \eqref{DefiSSBES} turns to be
\ben\label{Eq-g}
		\partial_tg + \langle t \rangle^{-2}\mathsf{T}g = \langle t \rangle^{-3 - \gamma}[\mathsf{Q}(G, g) + \mathsf{Q}(g, \mathbf{M})].
	\een
	Since $\mathsf{X}$ and $\mathsf{Y}$ defined in \eqref{DefiXY} commute with $\partial_t + \langle t \rangle^{-2}\mathsf{T}$, we derive that $[\partial_t + \langle t \rangle^{-2}\mathsf{T}, \mathsf{D}]=0$. This implies that 
	\begin{equation*}
		\partial_t[\langle z \rangle^{mn}\mathsf{D}^\alpha g] + \langle t \rangle^{-2}\mathsf{T}[\langle z \rangle^{mn}\mathsf{D}^\alpha g] = \langle t \rangle^{-3 - \gamma}\langle z \rangle^{mn}\mathsf{D}^\alpha[\mathsf{Q}(G, g) + \mathsf{Q}(g, \mathbf{M})].
	\end{equation*}
	where  $\mathsf{T}\langle z \rangle = 0$ is used. By the standard energy method, we easily derive that 
	\begin{equation*}
		\frac{1}{2}\frac{d}{dt}\|\langle z \rangle^{mn}\mathsf{D}^\alpha g\|_{L^2}^2 = \langle t \rangle^{-3 - \gamma}\langle\mathsf{D}^\alpha[\mathsf{Q}(G, g) + \mathsf{Q}(g, \mathbf{M})], \langle z \rangle^{2mn}\mathsf{D}^\alpha g \rangle.
	\end{equation*}
	By \eqref{QDerivation} and \eqref{FSGWP}, we get that
	\begin{equation}\label{EnergyG}
		\frac{1}{2}\frac{d}{dt}|\!|\!|g|\!|\!|_N^2 = \langle t \rangle^{-3 - \gamma}\sum_{n = 1}^N\sum_{|\alpha| \le N - n}\sum_{\alpha_1 + \alpha_2 = \alpha}C_\alpha^{\alpha_1}\langle \mathsf{Q}(\mathsf{D}^{\alpha_1}G, \mathsf{D}^{\alpha_2}g) + \mathsf{Q}(\mathsf{D}^{\alpha_1}g, \mathsf{D}^{\alpha_2}\mathbf{M}), \langle z \rangle^{2mn}\mathsf{D}^\alpha g \rangle.
	\end{equation}
	Now we may reduce the  estimates   to the control of $I$ and $J$ defined as follows:
	\begin{equation*}
	\begin{aligned}
		I &:= \sum_{\alpha_1 + \alpha_2 = \alpha}\langle \mathsf{Q}(\mathsf{D}^{\alpha_1}G, \langle z \rangle^{mn}\mathsf{D}^{\alpha_2}g) + \mathsf{Q}(\mathsf{D}^{\alpha_1}g, \langle z \rangle^{mn}\mathsf{D}^{\alpha_2}\mathbf{M}), \langle z \rangle^{mn}\mathsf{D}^\alpha g \rangle;\\
		J &:= \sum_{\alpha_1 + \alpha_2 = \alpha}\langle \langle z \rangle^{mn}\mathsf{Q}(\mathsf{D}^{\alpha_1}G, \mathsf{D}^{\alpha_2}g) - \mathsf{Q}(\mathsf{D}^{\alpha_1}G, \langle z \rangle^{mn}\mathsf{D}^{\alpha_2}g), \langle z \rangle^{mn}\mathsf{D}^\alpha g \rangle \\
		&+ \sum_{\alpha_1 + \alpha_2 = \alpha}\langle \langle z \rangle^{mn}\mathsf{Q}(\mathsf{D}^{\alpha_1}g, \mathsf{D}^{\alpha_2}\mathbf{M}) - \mathsf{Q}(\mathsf{D}^{\alpha_1}g, \langle z \rangle^{mn}\mathsf{D}^{\alpha_2}\mathbf{M}), \langle z \rangle^{mn}\mathsf{D}^\alpha g \rangle.\\
	\end{aligned}
	\end{equation*}
	Going further, we may split $I$ and $J$ into several parts:

	\smallskip\noindent$\bullet$ For $I$, we have $I:= I_1 + I_2 + I_3 + I_4$, where
	\begin{equation*}
	\begin{gathered}
		I_1 = \langle \mathsf{Q}(G, \langle z \rangle^{mn}\mathsf{D}^\alpha g), \langle z \rangle^{mn}\mathsf{D}^\alpha g \rangle; \quad
		I_4 = \sum_{\alpha_1 + \alpha_2 = \alpha}\langle \mathsf{Q}(\mathsf{D}^{\alpha_1}g, \langle z \rangle^{mn}\mathsf{D}^{\alpha_2}\mathbf{M}), \langle z \rangle^{mn}\mathsf{D}^\alpha g \rangle;\\
	\end{gathered}
	\end{equation*}
	If $s \ge 1/2$,
	\begin{equation*}
	\begin{aligned}
		I_2 &= \sum_{\substack{\alpha_1 + \alpha_2 = \alpha \\ |\alpha_1| = 1}}\langle \mathsf{Q}(\mathsf{D}^{\alpha_1}G, \langle z \rangle^{mn}\mathsf{D}^{\alpha_2}g), \langle z \rangle^{mn}\mathsf{D}^\alpha g \rangle; \quad 
		I_3 &= \sum_{\substack{\alpha_1 + \alpha_2 = \alpha\\ |\alpha_1| \ge 2}}\langle \mathsf{Q}(\mathsf{D}^{\alpha_1}G, \langle z \rangle^{mn}\mathsf{D}^{\alpha_2}g), \langle z \rangle^{mn}\mathsf{D}^\alpha g \rangle.\\
	\end{aligned}
	\end{equation*}
	If $s < 1/2$,
	\begin{equation*}
		I_2 = 0; \:\:\: I_3 = \sum_{\substack{\alpha_1 + \alpha_2 = \alpha\\ |\alpha_1| \ge 1}}\langle \mathsf{Q}(\mathsf{D}^{\alpha_1}G, \langle z \rangle^{mn}\mathsf{D}^{\alpha_2}g), \langle z \rangle^{mn}\mathsf{D}^\alpha g \rangle.
	\end{equation*}

	\smallskip\noindent$\bullet$ For $J$, we have $J = J_1 + J_2 + J_3$, where
	\begin{equation*}
	\begin{aligned}
		J_1 &= \langle \langle z \rangle^{mn}\mathsf{Q}(G, \mathsf{D}^\alpha g) - \mathsf{Q}(G, \langle z \rangle^{mn}\mathsf{D}^\alpha g), \langle z \rangle^{mn}\mathsf{D}^\alpha g \rangle; \\
		J_2 &= \sum_{\substack{\alpha_1 + \alpha_2 = \alpha\\ |\alpha_1| \ge 1}}\langle \langle z \rangle^{mn}\mathsf{Q}(\mathsf{D}^{\alpha_1}G, \mathsf{D}^{\alpha_2}g) - \mathsf{Q}(\mathsf{D}^{\alpha_1}G, \langle z \rangle^{mn}\mathsf{D}^{\alpha_2}g), \langle z \rangle^{mn}\mathsf{D}^\alpha g \rangle; \\
		J_3 &= \sum_{\alpha_1 + \alpha_2 = \alpha}\langle \langle z \rangle^{mn}\mathsf{Q}(\mathsf{D}^{\alpha_1}g, \mathsf{D}^{\alpha_2}\mathbf{M}) - \mathsf{Q}(\mathsf{D}^{\alpha_1}g, \langle z \rangle^{mn}\mathsf{D}^{\alpha_2}\mathbf{M}), \langle z \rangle^{mn}\mathsf{D}^\alpha g \rangle. 
	\end{aligned}
	\end{equation*}

	In what follows, we will give the estimates term by term.

  
	\subsection{Estimate of $I_1$}   We divide the collision operator $\mathsf{Q}$ into $\mathsf{Q} = \mathsf{Q}_1 + \mathsf{Q}_2$, where
	\begin{equation*}
	\begin{aligned}
		\mathsf{Q}_1(F, G):= \int F'_*(G' - G)Bd\sigma dv_*; \quad
		\mathsf{Q}_2(F, G):= G\int (F'_* - F_*)Bd\sigma dv_*. 
	\end{aligned}
	\end{equation*}

	\begin{proposition}\label{Q2Estimate}
		For smooth functions $g = g(v), h = h(v)$ and $f = f(v)$, there holds
		\begin{equation*}
			|\langle \mathsf{Q}_2(g, h), f \rangle_v| \lesssim \int |v - v_*|^\gamma|g_*hf|dvdv_* \lesssim (|g|_{L^1} + |g|_{L^\infty})|h|_{L^2}|f|_{L^2}.
		\end{equation*}
	\end{proposition}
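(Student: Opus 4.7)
The plan is to combine the cancellation lemma of Alexandre-Desvillettes-Villani-Wennberg with an elementary convolution estimate. Unwinding the definition,
\[\mathsf{Q}_2(g,h)(v) = h(v) \int_{\mathbb{R}^3 \times \mathbb{S}^2}[g(v'_*) - g(v_*)] B(v-v_*,\sigma)\,d\sigma dv_*,\]
so the first task is to bound the inner $(\sigma,v_*)$-integral by something comparable to $\int |g_*| |v-v_*|^\gamma dv_*$. I would perform the change of variable $v_* \mapsto v'_*$ for fixed $(v,\sigma)$ (whose Jacobian equals $\cos^2(\theta/2)/4$) in the $g(v'_*)$ term and combine with the $g(v_*)$ term to rewrite the integral as a convolution $(g \ast S)(v)$ with kernel
\[S(w) = 2\pi |w|^\gamma \int_0^{\pi/2} \sin\theta\, b(\cos\theta) \bigl[\cos^{-3-\gamma}(\theta/2) - 1\bigr]\, d\theta.\]
The bracket vanishes like $\theta^2$ at zero, which compensates the $\theta^{-1-2s}$ singularity from assumption $\mathbf{(A1)}$, so the $\theta$-integral converges for all $s \in (0,1)$ and $|S(w)| \lesssim |w|^\gamma$. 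Taking absolute values under the outer $dv$ integration and pairing against $f$ then yields the first inequality.

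For the second inequality, I would estimate the convolution $(|g|\ast |\cdot|^\gamma)(v)$ pointwise by splitting $|v-v_*|^\gamma = |v-v_*|^\gamma \mathbf{1}_{|v-v_*|\le 1} + |v-v_*|^\gamma \mathbf{1}_{|v-v_*|>1}$. The far piece is bounded by $1$ (because $\gamma<0$) and integrates against $|g|$ to give at most $|g|_{L^1}$; the near piece satisfies $|w|^\gamma \mathbf{1}_{|w|\le 1} \in L^1(\mathbb{R}^3)$ since $\gamma > -3$, so convolution with $|g| \in L^\infty$ contributes at most $C|g|_{L^\infty}$. Hence $(|g|\ast|\cdot|^\gamma)(v) \lesssim |g|_{L^1} + |g|_{L^\infty}$ uniformly in $v$, and a single Cauchy--Schwarz applied to $|h(v)\, f(v)|$ closes the estimate.

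This proof is essentially routine once the cancellation lemma is available, so I do not anticipate a genuine obstacle. The one subtle point worth flagging is the necessity of cancellation: bounding $g(v'_*)$ and $g(v_*)$ separately would produce a divergent factor $\int_0^{\pi/2}\theta^{-1-2s}d\theta$, and it is precisely the $O(\theta^2)$ vanishing of $\cos^{-3-\gamma}(\theta/2)-1$ at grazing angles that rescues integrability in the non-cutoff regime.
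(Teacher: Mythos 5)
Your proof is correct and follows essentially the same route as the paper's: the first inequality via the cancellation lemma (which you re-derive rather than cite, correctly obtaining the kernel $S(w)\lesssim|w|^\gamma$ from the $O(\theta^2)$ vanishing of $\cos^{-3-\gamma}(\theta/2)-1$), and the second via the same split of $|v-v_*|^\gamma$ into near and far regions followed by Cauchy--Schwarz.
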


	\begin{proof}
		 Thanks to the Cancellation Lemma \ref{Cancellation}, it is easy to check that   
		\beno
			|\langle \mathsf{Q}_2(g, h), f \rangle_v| \le \int|v - v_*|^\gamma|g_*hf|dvdv_*.
		\eeno
		Then the desired result follows by splitting 
		 the integral region into $\{|v - v_*| > 1\}$ and $\{|v - v_*| \le 1\}$. 
	\end{proof}

	For $ g \ge 0 $, we introduce the dissipation norm stemming from the collision operator. We set
	\ben\label{DefDgg}
		D_g(f):= \frac{1}{2}\int g_*(f' - f)^2Bd\sigma dvdv_*, \:\:\: \mathfrak{D}_g(f):= \int D_g(f)dx.
	\een
	\begin{proposition}\label{coercivity}
		For smooth functions $g= g(v) \ge 0, f = f(v)$, we have
		\begin{equation*}
			\langle \mathsf{Q}(g, f), f \rangle_v + D_g(f) \lesssim (|g|_{L^1} + |g|_{L^\infty})|f|_{L^2}^2.
		\end{equation*}
	\end{proposition}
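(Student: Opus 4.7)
The plan is to combine a symmetrization of $\langle \mathsf{Q}(g,f),f\rangle_v$ via the pre/post collisional change of variables with the Cancellation Lemma, reducing the problem to the same kinetic-weight estimate already carried out in Proposition~\ref{Q2Estimate}.

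First I would split $\mathsf{Q}=\mathsf{Q}_1+\mathsf{Q}_2$ as in the preceding decomposition. The contribution of $\mathsf{Q}_2$ is immediately handled by Proposition~\ref{Q2Estimate} (applied with $h=f$), giving $|\langle \mathsf{Q}_2(g,f),f\rangle_v|\lesssim (|g|_{L^1}+|g|_{L^\infty})|f|_{L^2}^2$. So only $\langle \mathsf{Q}_1(g,f),f\rangle_v=\int g'_*(f'-f)\,f\,B\,d\sigma dv_*dv$ requires genuine work.

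For this term I would use the algebraic identity $f(f'-f)=\tfrac12[(f')^2-f^2]-\tfrac12(f'-f)^2$, together with the fact that the involution $(v,v_*)\leftrightarrow (v',v'_*)$ has Jacobian $1$ and preserves $B\,d\sigma\,dv_*\,dv$. Under this symmetry, $\tfrac12\int g'_*(f'-f)^2B=\tfrac12\int g_*(f'-f)^2B=D_g(f)$, and $\tfrac12\int g'_*(f')^2B=\tfrac12\int g_* f^2 B$. Collecting the pieces yields
\[
\langle \mathsf{Q}_1(g,f),f\rangle_v+D_g(f)=\tfrac12\int (g_*-g'_*)f^2\,B\,d\sigma dv_*dv.
\]

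To control the right-hand side I would apply the same involution once more on the $g'_*$ term to rewrite it as $-\tfrac12\int g_*\bigl[(f^2)'-f^2\bigr]B\,d\sigma dv_*dv$. This is exactly the setting of the Cancellation Lemma~\ref{Cancellation}: the $\sigma$-integral of the difference $(f^2)'-f^2$ absorbs the angular singularity of $b(\cos\theta)$ and leaves an effective kernel bounded by a multiple of $|v-v_*|^\gamma$, so that
\[
\Bigl|\tfrac12\int (g_*-g'_*)f^2\,B\,d\sigma dv_*dv\Bigr|\lesssim \int |v-v_*|^\gamma g_* f(v)^2\,dv_*dv.
\]
Finally, the same dichotomy used in Proposition~\ref{Q2Estimate} closes the estimate: on $\{|v-v_*|>1\}$ one has $|v-v_*|^\gamma\le 1$ (since $\gamma<0$), giving $|g|_{L^1}|f|_{L^2}^2$; on $\{|v-v_*|\le 1\}$, factor out $|g|_{L^\infty}$ and use that $\int_{|u|\le 1}|u|^\gamma du<\infty$ (since $\gamma>-3$), giving $|g|_{L^\infty}|f|_{L^2}^2$.

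The main obstacle I anticipate is the bookkeeping in the symmetrization: one must verify that $\int g'_*(f'-f)^2 B$ really coincides with $2D_g(f)$ (so that the dissipation emerges with the right sign), and apply the Cancellation Lemma in the form adapted to the nonstandard integrand $g_*[(f^2)'-f^2]$ rather than the usual $(g'_* h'-g_* h)$ structure. Once those identifications are done, the rest is the same weighted-$L^2$ splitting already used in Proposition~\ref{Q2Estimate}.
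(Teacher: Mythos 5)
Your proof is correct and is essentially the same as the paper's: both hinge on the pre/post-collisional symmetrization, the identity $f(f'-f)=\tfrac12[(f')^2-f^2]-\tfrac12(f'-f)^2$, the Cancellation Lemma to reduce to an effective $|v-v_*|^\gamma$ kernel, and the $\{|v-v_*|\lessgtr 1\}$ splitting. The only (cosmetic) difference is that you first peel off $\mathsf{Q}_2$ via Proposition~\ref{Q2Estimate} and symmetrize only $\mathsf{Q}_1$, whereas the paper symmetrizes the full $\mathsf{Q}$ directly to arrive at \eqref{coerandQ} in one step; both routes produce the same remainder $\tfrac12\int g_*[(f')^2-f^2]B$.
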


	\begin{proof}
		Using the pre-post change of variables, one may derive that
		\begin{equation}\label{coerandQ}
		\begin{aligned}
			\langle \mathsf{Q}(g, f), f \rangle_v & = -\frac{1}{2}\int g_*(f' - f)^2Bd\sigma dv_*dv + \frac{1}{2}\int g_*[(f')^2 - f^2]Bd\sigma dv_*dv.\\
		\end{aligned}
		\end{equation}
		Since $g \ge 0$,  we have $D_g(f)\ge0$. Moreover the Cancellation Lemma \ref{Cancellation} yields that
		\begin{equation*}
		\begin{aligned}
			\langle \mathsf{Q}(g, f), f \rangle_v + D_g(f) &\sim \int g(v_*)f(v)^2|v - v_*|^\gamma dvdv_*  
			&\lesssim (|g|_{L^1} + |g|_{L^\infty})|f|_{L^2}^2.
		\end{aligned}
		\end{equation*}
		This ends the proof.
	\end{proof}

	\begin{lemma}\label{ControlofI1} It holds that
		\begin{equation*}
			I_1 + \mathfrak{D}_G(\langle z \rangle^{mn}\mathsf{D}^\alpha g) \lesssim |\!|\!|G|\!|\!|_5|\!|\!|g|\!|\!|_N^2.
		\end{equation*}
	\end{lemma}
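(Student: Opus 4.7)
The plan is to reduce the inequality to a pointwise-in-$x$ coercivity estimate followed by a Sobolev embedding on the coefficient $G$. Let $h := \langle z\rangle^{mn}\mathsf{D}^\alpha g$, so that $I_1 = \langle \mathsf{Q}(G, h), h\rangle$ and $\mathfrak{D}_G(h) = \int D_G(h)\,dx$. Since $G = g + \mathbf{M} \ge 0$ along the bootstrap, Proposition \ref{coercivity} applied pointwise in $x$ gives
$$\langle \mathsf{Q}(G(x,\cdot), h(x,\cdot)), h(x,\cdot)\rangle_v + D_{G(x,\cdot)}(h(x,\cdot)) \lesssim \bigl(|G(x,\cdot)|_{L^1_v} + |G(x,\cdot)|_{L^\infty_v}\bigr)\,|h(x,\cdot)|_{L^2_v}^2.$$
Integrating over $x$ and applying H\"older, the lemma reduces to proving
$$I_1 + \mathfrak{D}_G(h) \lesssim \bigl\|\,|G|_{L^1_v} + |G|_{L^\infty_v}\bigr\|_{L^\infty_x}\; \|h\|_{L^2_xL^2}^2.$$

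For the second factor I would simply observe that $\|h\|_{L^2_xL^2}^2 = \|\langle z\rangle^{mn}\mathsf{D}^\alpha g\|_{L^2_xL^2}^2 \le |\!|\!|g|\!|\!|_N^2$, since the index pair $(n,\alpha)$ with $n \ge 1$ and $|\alpha| \le N - n$ is a summand in the definition \eqref{FSGWP}. For the first factor, my plan is to invoke Sobolev embedding in both the $x$ and $v$ variables: in three dimensions, $L^\infty \hookrightarrow H^s$ with any $s > 3/2$, so two derivatives in each variable suffice. Using \eqref{Upperxv} together with $|t|/\langle t\rangle \le 1$, each of $\partial_x$ and $\partial_v$ is pointwise controlled by $\mathsf{D}$ uniformly in $t$; inserting an extra $\langle v\rangle^{-k}$ to trade $|G|_{L^1_v}$ for a weighted $L^2_v$ norm, I obtain
$$\bigl\|\,|G|_{L^1_v} + |G|_{L^\infty_v}\bigr\|_{L^\infty_x} \lesssim \sum_{|\beta|\le 4}\|\langle z\rangle^{m}\mathsf{D}^\beta G\|_{L^2_xL^2},$$
which is exactly the $n = 1$ contribution to the norm $|\!|\!|G|\!|\!|_5$. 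Combining these two estimates yields the lemma.

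The main obstacle is purely bookkeeping, namely checking that the Sobolev embedding consumes no more than $N - n = 4$ derivatives and no more than weight $\langle z\rangle^m$ on $G$, which is why the right-hand side needs the specific index $|\!|\!|G|\!|\!|_5$ rather than a smaller one. This is exactly the role of the conditions $m \ge 4$ and $N \ge 4$ imposed in \eqref{FSGWP}: they guarantee that the embedding in the six phase-space variables terminates inside the energy functional, and that the extra $\langle v\rangle$-decay needed to pass from $|\cdot|_{L^\infty_v}$ and $|\cdot|_{L^1_v}$ to a weighted $L^2_v$ norm is available.
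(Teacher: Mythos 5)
Your argument is the same as the paper's: apply Proposition \ref{coercivity} pointwise in $x$, integrate, pull out the $L^\infty_x$ norm of $|G|_{L^1_v}+|G|_{L^\infty_v}$, estimate it by Sobolev embedding in $x$ and $v$ (trading $L^1_v$ for a $\langle v\rangle$-weighted $L^2_v$ norm), and convert $\partial_x,\partial_v$ back to $\mathsf{D}$ via \eqref{Upperxv} so that the result lands in $|\!|\!|G|\!|\!|_5$, while $\|\langle z\rangle^{mn}\mathsf{D}^\alpha g\|_{L^2_xL^2}^2\le|\!|\!|g|\!|\!|_N^2$ handles the other factor. This matches the paper's proof step for step, with only cosmetic differences in how the Sobolev bookkeeping is presented.
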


	\begin{proof}
		Thanks to  Proposition \ref{coercivity}, we get that
		\begin{equation*}
			I_1 + \mathfrak{D}_G(\langle z \rangle^{mn}\mathsf{D}^\alpha g) \lesssim \int(|G|_{L^1} + |G|_{L^\infty})|\langle z \rangle^{mn}\mathsf{D}^\alpha g|_{L^2}^2dx \lesssim (\|G\|_{L^\infty_x L^1} + \|G\|_{L^\infty_x L^\infty})\|\langle z \rangle^{mn}\mathsf{D}^\alpha g\|_{L_x^2L^2}^2.
		\end{equation*}
		Thanks to \eqref{Upperxv}, by Sobolev embedding Theorem, we have
		\ben\label{SobolevD}
			\|G\|_{L^\infty_x L^1} \lesssim \|G\|_{H^2_xL_2^2} \lesssim \sum_{|\alpha| \le 2}\|\mathsf{D}^\alpha(\langle z \rangle^2G)\|_{L_x^2L^2} \lesssim |\!|\!|G|\!|\!|_3; 
			\|G\|_{L^\infty_x L^\infty} \lesssim \|G\|_{H^2_xH^2} \lesssim \sum_{|\alpha| \le 4}\|\mathsf{D}^\alpha G\|_{L^2_xL^2} \lesssim |\!|\!|G|\!|\!|_5.\een
		From these together with the definition \eqref{FSGWP}, we get the desired result.
	\end{proof}

	\begin{corollary}\label{coerupper}
		From \eqref{coerandQ} and Corollary \ref{HLB1} we get that
		\begin{equation*}
			D_g(f) \lesssim |g|_{L_4^2}|f|_{H_{\mathfrak{r}/2}^s}^2 + (|g|_{L^1} + |g|_{L^\infty})|f|_{L^2}^2.
		\end{equation*}
	\end{corollary}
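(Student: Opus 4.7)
The statement is essentially a one-line consequence of the identity \eqref{coerandQ} rewritten to isolate $D_g(f)$, and it should follow by combining two ingredients: an upper bound for $\langle \mathsf{Q}(g,f),f\rangle_v$ (presumably Corollary \ref{HLB1}) and the Cancellation Lemma \ref{Cancellation}. My plan is as follows.

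First, I would solve \eqref{coerandQ} for the dissipation term, writing
\begin{equation*}
D_g(f) \;=\; -\langle \mathsf{Q}(g,f),f\rangle_v \;+\; \tfrac12\int g_*\bigl[(f')^2-f^2\bigr]\,B\,d\sigma\, dv_*\, dv.
\end{equation*}
The first piece on the right is controlled directly by the upper bound for the collision operator: invoking Corollary \ref{HLB1} (the weighted-Sobolev upper bound that accounts for the anisotropic loss of weight $\langle v\rangle^{\gamma+2s}$) yields
\begin{equation*}
|\langle \mathsf{Q}(g,f),f\rangle_v| \;\lesssim\; |g|_{L^2_4}\,|f|_{H^s_{\mathfrak{r}/2}}^2,
\end{equation*}
which already produces the leading term on the right-hand side of the corollary.

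For the second piece I would apply the Cancellation Lemma \ref{Cancellation} to the integral $\int g_*[(f')^2 - f^2]\,B\,d\sigma\,dv_*\,dv$, which converts it (up to harmless constants) into $\int g_* f^2 |v-v_*|^\gamma\,dv\,dv_*$. Splitting the $v_*$-integration into the regions $\{|v-v_*|\le 1\}$ and $\{|v-v_*|>1\}$ and using the elementary estimate $|v-v_*|^\gamma \mathbf{1}_{|v-v_*|\le 1} \in L^1_{v_*}$ while $|v-v_*|^\gamma \mathbf{1}_{|v-v_*|>1}\le 1$ — exactly as in the proof of Proposition \ref{Q2Estimate} — gives the bound $(|g|_{L^1}+|g|_{L^\infty})|f|_{L^2}^2$.

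Adding the two contributions yields the claimed inequality. There is no genuine obstacle here: the only minor point to check is that the constant implicit in Corollary \ref{HLB1} is independent of $g,f$ in the relevant way, and that the splitting argument for $|v-v_*|^\gamma$ is robust for all $\gamma\in(-3,0)$, both of which are standard.
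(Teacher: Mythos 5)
Your proposal is correct and is exactly the intended one-line derivation: rearrange \eqref{coerandQ} to write $D_g(f) = -\langle \mathsf{Q}(g,f),f\rangle_v + \tfrac12\int g_*[(f')^2-f^2]B\,d\sigma\,dv_*\,dv$, bound the first term via Corollary \ref{HLB1} with $a=b=s$, $w_1=w_2=(\gamma+2s)/2$ (and monotonicity in the weight when $\gamma+2s\le 0$, since $\mathfrak{r}/2\ge(\gamma+2s)/2$), and control the second term by the Cancellation Lemma plus the near/far splitting, which is precisely the computation already carried out in the proof of Proposition \ref{coercivity}. No gaps.
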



	\subsection{Estimate of $I_2$}
	Since $I_2 = 0$ when $s < 1/2$, we only need to consider the case $s \ge 1/2$. In this situaiton, we recall that $I_2$ is defined by
	\begin{equation*}
		I_2 = \sum_{\substack{\alpha_1 + \alpha_2 = \alpha\\ |\alpha_1| = 1}}\langle \mathsf{Q}(\mathsf{D}^{\alpha_1}G, \langle z \rangle^{mn}\mathsf{D}^{\alpha_2}g), \langle z \rangle^{mn}\mathsf{D}^\alpha g \rangle.
	\end{equation*}
Using the facts that $|\alpha_1| = 1$ and $\mathsf{D}^\alpha = \mathsf{D}^{\alpha_1}\mathsf{D}^{\alpha_2}$, we further have  $I_2=P + Q - R$, where
	\begin{equation*}
	\begin{aligned}
		P &:= \langle \mathsf{Q}_1(\mathsf{D}^{\alpha_1}G, \langle z \rangle^{mn}\mathsf{D}^{\alpha_2}g), \mathsf{D}^{\alpha_1}(\langle z \rangle^{mn}\mathsf{D}^{\alpha_2}g) \rangle;\quad 
		Q := \langle \mathsf{Q}_2(\mathsf{D}^{\alpha_1}G, \langle z \rangle^{mn}\mathsf{D}^{\alpha_2}g), \mathsf{D}^{\alpha_1}(\langle z \rangle^{mn}\mathsf{D}^{\alpha_2}g) \rangle;\\
		R &:= \langle \mathsf{Q}(\mathsf{D}^{\alpha_1}G, \langle z \rangle^{mn}\mathsf{D}^{\alpha_2}g), (\mathsf{D}^{\alpha_1}\langle z \rangle^{mn})\mathsf{D}^{\alpha_2}g \rangle.\\
	\end{aligned}
	\end{equation*}

	By Proposition \ref{Q2Estimate} and \eqref{SobolevD}, we derive the estimate for $Q$:
	\begin{equation*}
		|Q| \lesssim (\|\mathsf{D}^{\alpha_1}G\|_{L^\infty_x L^1} + \|\mathsf{D}^{\alpha_1}G\|_{L^\infty_x L^\infty})\|\langle z \rangle^{mn}\mathsf{D}^{\alpha_2}g\|_{L^2_xL^2}\|\mathsf{D}^{\alpha_1}(\langle z \rangle^{mn}\mathsf{D}^{\alpha_2}g)\|_{L^2_xL^2} \lesssim |\!|\!|G|\!|\!|_5|\!|\!|g|\!|\!|_N^2.
	\end{equation*}
	For $R$, by applying  Corollary \ref{HLB1}, we  first have
	\begin{equation*}
		|R| \lesssim \|\mathsf{D}^{\alpha_1}G\|_{L^\infty_x L_4^2}\|\langle z \rangle^{mn}\mathsf{D}^{\alpha_2}g\|_{L^2_xH_{\gamma/2 + s}^s}\|(\mathsf{D}^{\alpha_1}\langle z \rangle^{mn})\mathsf{D}^{\alpha_2}g\|_{L^2_xH_{\gamma/2 + s}^s}.
	\end{equation*}
	By the definition \eqref{FSGWP}, we have $(1 - s)m \ge 2$. Thus by   interpolation inequality
	\begin{equation}\label{TransDW}
		|f|_{H_{\gamma/2 + s}^s} \le |f|_{H_1^s} \lesssim |f|_{H^1}^s|f|_{L_{1/(1 - s)}^2}^{1 - s} \lesssim |f|_{H^1} + |f|_{L_m^2}.
	\end{equation}
	From this together with the fact that $|\mathsf{D}^{\alpha_1}\langle z \rangle^{mn}| \lesssim \langle z \rangle^{mn}$, we easily derive that   $|R| \lesssim |\!|\!|G|\!|\!|_4|\!|\!|g|\!|\!|_N^2$.  

	\begin{remark}  If
		  $(1 - s)m \ge 2$, similarly to \eqref{TransDW}, we have
		\begin{equation}\label{TransDW1}
			|f|_{H_2^s} \lesssim |f|_{H^1} + |f|_{L_m^2}.
		\end{equation}
		If $s < 1/2$ and $(1 - 2s)m \ge 1$, we have
		\begin{equation}\label{TransDW2}
			|f|_{H_{\gamma + 2s}^{2s}} \le |f|_{H_1^{2s}} \lesssim |f|_{H^1} + |f|_{L_m^2}.
		\end{equation}
	\end{remark}

Next, we will show that $|P| \lesssim |\!|\!|G|\!|\!|_5|\!|\!|g|\!|\!|_N^2$. To do it, we first prove 

	\begin{lemma}\label{itgbp}
		For smooth functions $g = g(v), f = f(v)$, it holds that
		\begin{equation*}
			\langle \mathsf{Q}_1(g, f), \partial f \rangle_v = \frac{1}{4}\int(\partial g)_*(f' - f)^2Bd\sigma dvdv_* + \frac{1}{2}\int(g'_* - g_*)(f' - f)\partial fBd\sigma dvdv_*.
		\end{equation*}
		for $\partial =\mathsf{X}, \mathsf{Y}$.
	\end{lemma}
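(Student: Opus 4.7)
The plan is to combine the pre-post change of variables with an integration by parts along the combined $(v,v_*)$-direction that preserves the collision invariants. Since $g=g(v)$ and $f=f(v)$ depend only on velocity, the $x$-components of $\mathsf{X}$ and $\mathsf{Y}$ do not act on them, so $\partial$ reduces to a scalar multiple of some $\partial_{v_k}$. I would therefore prove the statement for $\partial=\partial_{v_k}$; the constant factor relating $\partial$ to $\partial_{v_k}$ appears identically on both sides of the claim.

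Denote $I:=\langle\mathsf{Q}_1(g,f),\partial f\rangle_v=\int g'_*(f'-f)\partial f\,Bd\sigma dv_* dv$. Applying the involution $(v,v_*)\mapsto (v',v'_*)$, under which the Jacobian is one and $B$ is invariant, produces the dual representation $I=-\int g_*(f'-f)(\partial f)'\,Bd\sigma dv_* dv$, where $(\partial f)'$ denotes $(\partial f)(v')$. Averaging the two expressions and using the decomposition $g'_*\partial f-g_*(\partial f)'=(g'_*-g_*)\partial f+g_*[\partial f-(\partial f)']$ yields
\[
2I = \int(g'_*-g_*)(f'-f)\partial f\,Bd\sigma dv_* dv + \int g_*(f'-f)[\partial f-(\partial f)']\,Bd\sigma dv_* dv.
\]
The first integral is exactly twice the second term of the claim, so the remaining task is to identify the second integral with $\tfrac{1}{2}\int(\partial g)_*(f'-f)^2 B$.

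The key algebraic observation is that momentum conservation $v+v_*=v'+v'_*$ implies $\partial_{v_k}v'+\partial_{v_{*k}}v'=e_k$, hence by the chain rule $(\partial f)'-\partial f=(\partial_{v_k}+\partial_{v_{*k}})(f'-f)$. Multiplying by $(f'-f)$ turns this into an exact divergence,
\[
[\partial f-(\partial f)'](f'-f) = -\tfrac{1}{2}(\partial_{v_k}+\partial_{v_{*k}})[(f'-f)^2].
\]
Substituting and integrating by parts in both $v$ and $v_*$ transfers the derivative onto $g_*B$. Since $B=|v-v_*|^\gamma b(\hat n\cdot\sigma)$ depends on $(v,v_*)$ only through $|v-v_*|$ and $\hat n=(v-v_*)/|v-v_*|$, both of which are invariant under the simultaneous shift $(v,v_*)\mapsto(v+he_k,v_*+he_k)$, one checks $(\partial_{v_k}+\partial_{v_{*k}})B=0$; combined with $\partial_{v_k}g_*=0$ and $\partial_{v_{*k}}g_*=(\partial g)_*$, the boundary-free integration by parts collapses the integral to $\tfrac{1}{2}\int(\partial g)_*(f'-f)^2 B\,d\sigma dv_* dv$, and the lemma follows after dividing by $2$.

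The main obstacle is recognizing the hidden divergence structure of $(\partial f)'-\partial f$: a naive integration by parts on $g'_*(f'-f)\partial f$ fails because $\partial_{v_k}$ produces awkward Jacobian factors $\partial_{v_k}v'$ when acting on $f'$ and on $g'_*$. The remedy is to symmetrize via pre-post first so that $\partial_{v_k}$ is forced to appear paired with $\partial_{v_{*k}}$, and then to exploit the Galilean invariance of $B$ (the analytic content of $\partial_{v_k}v'+\partial_{v_{*k}}v'=e_k$, i.e., momentum conservation) so that only the derivative landing on $g_*$ survives.
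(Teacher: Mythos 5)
Your proof is correct and follows essentially the same route as the paper's: both rely on the pre-post involution $(v,v_*,\sigma)\mapsto(v',v'_*,\sigma')$ together with the translation/chain-rule identity $(\partial f)'-\partial f=(\partial_{v_k}+\partial_{v_{*k}})(f'-f)$ and $(\partial_{v_k}+\partial_{v_{*k}})B=0$ (the paper writes $\mathsf{d}=\partial+\partial_*$ and uses $\mathsf{d}f'=(\partial f)'$, $\mathsf{d}g'_*=(\partial g)'_*$), followed by a boundary-free integration by parts in $(v,v_*)$. The only differences are in bookkeeping: you symmetrize via pre-post first and then recognize $g_*(f'-f)[\partial f-(\partial f)']$ as a total $\mathsf{d}$-divergence, whereas the paper integrates by parts first and applies the pre-post swap at the end; you also reduce $\partial=\mathsf{X},\mathsf{Y}$ to a scalar multiple of $\partial_{v_k}$ at the outset (legitimate since the identity is linear in $\partial$ and the $\partial_x$ components of $\mathsf{X},\mathsf{Y}$ annihilate $v$-only functions), while the paper treats $\partial=\partial_x$ and $\partial=\partial_v$ as separate cases.
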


	\begin{proof} We prove it in the spirit of \cite{Ch}. Because of \eqref{DefiXY}, we only need to prove the desired result in the case that $\partial = \partial_x, \partial_v$.

		\noindent$\bullet$ {\it Case 1: $\partial = \partial_x$.}  By the definition of $\mathsf{Q}_1$ we write
		\begin{equation*}
			\langle \mathsf{Q}_1(g, f), \partial f \rangle = \int g'_*(f' - f)\partial fBd\sigma dvdv_*.
		\end{equation*}
		For brevity, in what follows, we drop the integrals $\int\cdots d\sigma dvdv_*$ to get the equality. Applying integration by parts twice, we get that
		\begin{equation*}
		\begin{aligned}
			&g'_*(f' - f)\partial fB = -(\partial g)'_*(f' - f)fB - g'_*[(\partial f)' - \partial f]fB \\
			= &(\partial g)'_*(f' - f)^2B - (\partial g)'_*(f' - f)f'B - g'_*[(\partial f)' - \partial f]fB \\
			= &(\partial g)'_*(f' - f)^2B + g'_*[(\partial f)' - \partial f]f'B + g'_*(f' - f)(\partial f)'B - g'_*[(\partial f)' - \partial f]fB \\
			= &(\partial g)'_*(f' - f)^2B + g'_*[(\partial f)' - \partial f](f' - f)B + g'_*(f' - f)(\partial f)'B.\\
		\end{aligned}
		\end{equation*}
		From this, we further derive that
		\begin{equation*}
			2g'_*(f' - f)\partial fB = (\partial g)'_*(f' - f)^2B + 2g'_*(f' - f)(\partial f)'.
		\end{equation*}
		Using pre-post collision change of variables, we have
		\begin{equation}\label{prepostchange}
		\begin{aligned}
			(\partial g)'_*(f' - f)^2B &= (\partial g)_*(f' - f)^2B; \quad
			2g'_*(f' - f)(\partial f)'B &= -2g_*(f' - f)\partial fB,\\
		\end{aligned}
		\end{equation}
		which implies that
		\begin{equation*}
			4g'_*(f' - f)\partial fB = (\partial g)_*(f' - f)^2B + 2(g'_* - g_*)(f' - f)\partial f.
		\end{equation*}

		\noindent$\bullet$ {\it Case 2: $\partial = \partial_v$.} We set $\partial_* := \partial_{v_*}$. Since $(\partial + \partial_*)B = 0$ and $\partial_*f = 0$, by integration by parts twice, we derive that
		\begin{equation*}
		\begin{aligned}
			g'_*(f' - f)\partial fB &= -g'_*(f' - f)f\partial B - \partial[g'_*(f' - f)]fB  
			= g'_*(f' - f)f\partial_*B - \partial[g'_*(f' - f)]fB \\ 
			&= -\partial_*[g'_*(f' - f)]fB - \partial[g'_*(f' - f)]fB 
			= -(\partial + \partial_*)[g'_*(f' - f)]fB.\\
		\end{aligned}
		\end{equation*}
		Denote $\mathsf{d} = \partial + \partial_*, \mathsf{d}f = \partial f$, and then we have
		\begin{equation*}
		\begin{aligned}
			&g'_*(f' - f)\partial fB = -\mathsf{d}g'_*(f' - f)fB - g'_*(\mathsf{d}f' - \mathsf{d}f)fB  
			= \mathsf{d}g'_*(f' - f)^2B - \mathsf{d}g'_*(f' - f)f'B \\&- g'_*(\mathsf{d}f' - \partial f)fB 
			= \mathsf{d}g'_*(f' - f)^2B + g'_*(\mathsf{d}f' - \partial f)f'B + g'_*(f' - f)\mathsf{d}f'B - g'_*(\mathsf{d}f' - \partial f)fB \\
			&= \mathsf{d}g'_*(f' - f)^2B + g'_*[\mathsf{d}f' - \partial f](f' - f)B + g'_*(f' - f)\mathsf{d}f',\\
		\end{aligned}
		\end{equation*}
		which implies that
		\begin{equation*}
			2g'_*(f' - f)\partial fB = \mathsf{d}g'_*(f' - f)^2B + 2g'_*(f' - f)\mathsf{d}f'.
		\end{equation*}
		By the chain rule, we have
		\begin{equation*}
		\begin{gathered}
			\mathsf{d}f' = (\partial + \partial_*)f(\frac{v + v_*}{2} + \frac{|v - v_*|}{2}\sigma) = (\partial f)';\quad
			\mathsf{d}g'_* = (\partial + \partial_*)g(\frac{v + v_*}{2} - \frac{|v - v_*|}{2}\sigma) = (\partial g)'_*.\\
		\end{gathered}
		\end{equation*}
		Thus we have
		\begin{equation*}
			2g'_*(f' - f)\partial fB = (\partial g)'_*(f' - f)^2B + 2g'_*(f' - f)(\partial f)'.
		\end{equation*}
		From this together with \eqref{prepostchange}, we conclude the desired result.
	\end{proof}

	\begin{lemma}\label{partial}
		For smooth functions $g = g(x,v), f = f(x,v)$ and $\partial = \mathsf{D}^\alpha$ with $|\alpha| = 1$, it  holds that
		\beno
			 |\langle \mathsf{Q}_1(g, f), \partial f \rangle_v| &\lesssim& (|\partial g|_{L^1} + |\partial g|_{L^\infty})|f|_{L^2}^2 + |\partial g|_{L_4^2}|f|_{H_{\mathfrak{r}/2}^s}^2 + |g|_{L_\mathfrak{r}^1}|f|_{L_\mathfrak{r}^2}|\partial f|_{L^2}
				 + |\nabla g|_{L^1}|f|_{H^1}|\partial f|_{L^2} \\&&+ (|\langle \cdot \rangle\nabla g|_{L^1} + |\langle \cdot \rangle\nabla g|_{L^\infty})|f|_{L_1^2}^2 + |\langle \cdot \rangle\nabla g|_{L_4^2}|f|_{H_{\gamma/2 + s + 1}^s}^2 
				 + |\langle \cdot \rangle\nabla g|_{L^1}|\partial f|_{L^2}^2 \\&&+ |\nabla g|_{L_\mathfrak{r}^1}|f|_{L_\mathfrak{r}^2}|\partial f|_{L^2} + |\nabla^2g|_{L_\mathfrak{r}^1}|f|_{L_\mathfrak{r}^2}|\partial f|_{L^2}.
		\eeno
		Here we recall that  $\mathfrak{r} := \max\{\gamma + 2s, 0\}$.
	\end{lemma}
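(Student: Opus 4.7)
The plan is to apply the identity in Lemma \ref{itgbp} and then estimate the two resulting pieces separately. Since $|\alpha|=1$, we have $\partial\in\{\mathsf{X}_i,\mathsf{Y}_i\}$, and Lemma \ref{itgbp} yields
\begin{equation*}
\langle \mathsf{Q}_1(g,f),\partial f\rangle_v = \frac{1}{4}\int (\partial g)_*(f'-f)^2 B\,d\sigma dvdv_* + \frac{1}{2}\int (g'_*-g_*)(f'-f)\partial f\, B\,d\sigma dvdv_* =: A_1+A_2.
\end{equation*}

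\textbf{Step 1 (bound on $A_1$).} Recognize $A_1=\frac{1}{2} D_{\partial g}(f)$ directly from the definition \eqref{DefDgg}. Applying Corollary \ref{coerupper} with $g$ replaced by $\partial g$ immediately gives the first two families of terms in the statement, namely $|\partial g|_{L^2_4}|f|^2_{H^s_{\mathfrak{r}/2}}$ and $(|\partial g|_{L^1}+|\partial g|_{L^\infty})|f|^2_{L^2}$.

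\textbf{Step 2 (Taylor expansion for $A_2$).} For the cross term $A_2$, I Taylor-expand $g'_*-g_*$ around $v_*$ to second order:
\begin{equation*}
g'_*-g_* = (v'_*-v_*)\cdot\nabla g_* + \int_0^1 (1-\tau)(v'_*-v_*)^T\nabla^2 g(v_*+\tau(v'_*-v_*))(v'_*-v_*)\,d\tau.
\end{equation*}
Using $|v'_*-v_*|\lesssim|v-v_*|\sin(\theta/2)$ and the angular integrability $\int_0^{\pi/2}\theta^2 b(\cos\theta)\sin\theta\,d\theta<\infty$, the second-order remainder, together with a weighted Cauchy--Schwarz (splitting $|v-v_*|^\gamma$ between $\nabla^2 g(v_*)$ and the pair $(f'-f,\partial f)$), produces the term $|\nabla^2 g|_{L^1_\mathfrak{r}}|f|_{L^2_\mathfrak{r}}|\partial f|_{L^2}$.

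\textbf{Step 3 (first-order Taylor piece).} The linear piece $\int (v'_*-v_*)\cdot\nabla g_*\,(f'-f)\partial f\,B\,d\sigma dvdv_*$ is the delicate one, since the associated kernel $|v-v_*|^{1+\gamma}\theta\,b(\cos\theta)\sin\theta$ is only borderline integrable near $\theta=0$. Writing $v'_*-v_* = -\tfrac{|v-v_*|}{2}(\sigma-e)$ with $e=(v-v_*)/|v-v_*|$, I exploit the symmetry of $b(\cos\theta)$ in $\sigma$ (or perform an integration by parts in $\sigma$) to gain an extra factor of $\theta$, turning the angular kernel into an absolutely integrable one. A weighted Cauchy--Schwarz then separates $g$, $f$, $\partial f$ and yields the remaining terms of the lemma: $|\nabla g|_{L^1}|f|_{H^1}|\partial f|_{L^2}$, $|\nabla g|_{L^1_\mathfrak{r}}|f|_{L^2_\mathfrak{r}}|\partial f|_{L^2}$, the $|\langle\cdot\rangle\nabla g|$-type terms (where the extra $\langle v_*\rangle$ weight compensates the $|v-v_*|^{1+\gamma}$ growth at infinity), and the $|g|_{L^1_\mathfrak{r}}|f|_{L^2_\mathfrak{r}}|\partial f|_{L^2}$ term, which arises from the $\theta\sim\pi/2$ regime where one simply bounds $|g'_*-g_*|\le|g'_*|+|g_*|$ and uses the pre/post-collisional change of variable.

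\textbf{Main obstacle.} The principal technical difficulty is Step 3: controlling the first-order Taylor remainder without losing regularity in $f$. The naive angular kernel from $(v'_*-v_*)B$ is only barely integrable, and a careful cancellation or integration-by-parts argument in $\sigma$ is needed to distribute the derivative loss between $\nabla g$, $f$, and $\partial f$ so that the weighted Sobolev indices exactly match those in the statement; in particular, producing the balance $|\langle\cdot\rangle\nabla g|_{L^2_4}|f|^2_{H^s_{\gamma/2+s+1}}$ and $|\langle\cdot\rangle\nabla g|_{L^1}|\partial f|^2_{L^2}$ (with the precise weight exponents $4$ and $\gamma/2+s+1$) is the most subtle part of the proof, and dictates the choice of the energy norm $|\!|\!|\cdot|\!|\!|_N$.
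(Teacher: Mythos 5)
Steps 1 and 2 of your proposal coincide with the paper: $\mathbb{I}_1=\frac12\int(\partial g)_*(f'-f)^2B$ is (up to $|\partial g|$) the dissipation $D_{|\partial g|}(f)$, bounded by Corollary \ref{coerupper}, and the second-order Taylor remainder in $g$ gives the $|\nabla^2g|_{L_\mathfrak{r}^1}|f|_{L_\mathfrak{r}^2}|\partial f|_{L^2}$ term (the paper's $\mathbb{I}_2^{2,2},\mathbb{I}_2^{2,3}$). The gap is in Step 3, which is exactly the part you flag as the main obstacle without actually resolving it. The symmetrization of $b$ in $\sigma$ (i.e.\ Lemma \ref{lemsymetric}) only cancels the perpendicular part of $v'-v$ when the rest of the integrand is independent of $\sigma$; here $(f'-f)\partial f$ depends on $\sigma$ through $v'$, so the cancellation cannot be applied directly. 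Similarly, "integration by parts in $\sigma$" is not an operation the paper performs and would need to be justified.

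What the paper actually does is more combinatorial. It first splits $\mathbb{I}_2$ by $\{|v'-v|>1\}$ (handled directly by Lemma \ref{nonsingular}, yielding $|g|_{L_\mathfrak{r}^1}|f|_{L_\mathfrak{r}^2}|\partial f|_{L^2}$ — this is a restriction on $|v'-v|=|v-v_*|\sin(\theta/2)$, not on the angle alone, so your "$\theta\sim\pi/2$ regime" characterization misses the large-$|v-v_*|$ part) versus $\{|v'-v|\le1\}$, and only Taylor-expands $g$ on the latter. The first-order piece is then split again by $\{|v-v_*|\le1\}$ versus $\{|v-v_*|>1\}$. On $\{|v-v_*|\le1\}$ one Taylor-expands $f'-f$ to gain the needed extra $\theta$; this gives $|\nabla g|_{L^1}|f|_{H^1}|\partial f|_{L^2}$. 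On $\{|v-v_*|>1\}$ the key device is the weight-insertion identity $\langle v'\rangle(f'-f)=[(\langle v\rangle f)'-\langle v\rangle f]+(\langle v\rangle-\langle v'\rangle)f$, together with $\langle v'\rangle^{-1}|v-v_*|\lesssim\langle v_*\rangle$: the first bracket produces a dissipation term $D_{|\langle\cdot\rangle\nabla g|}(\langle v\rangle f)$, which is the source of $(|\langle\cdot\rangle\nabla g|_{L^1}+|\langle\cdot\rangle\nabla g|_{L^\infty})|f|_{L_1^2}^2+|\langle\cdot\rangle\nabla g|_{L_4^2}|f|_{H_{\mathfrak{r}/2+1}^s}^2$ and $|\langle\cdot\rangle\nabla g|_{L^1}|\partial f|_{L^2}^2$; the second bracket gives back an extra $\theta|v-v_*|$ and hence $|\nabla g|_{L_\mathfrak{r}^1}|f|_{L_\mathfrak{r}^2}|\partial f|_{L^2}$. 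Without this weight trick the specific exponents $L_4^2$ and $H^s_{\gamma/2+s+1}$ in the statement do not appear, so your outline as written does not close. To repair Step 3 you should replace the symmetrization idea by the two-level decomposition in $|v'-v|$ and $|v-v_*|$ and the weight-insertion identity described above.
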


	\begin{proof} Thanks to Lemma \ref{itgbp}, we only need to estimate
		\begin{equation*}
			\mathbb{I}_1:=\big|\int(\partial g)_*(f' - f)^2Bd\sigma dvdv_*\big| \: \text{ and } \: \mathbb{I}_2:=\big|\int(g'_* - g_*)(f' - f)\partial fBd\sigma dvdv_*\big|.
		\end{equation*}
		\noindent  \underline{\it Estimate of $\mathbb{I}_1$.} Thanks to Corollary \ref{coerupper}, we have 
		\begin{equation*}
			\mathbb{I}_1 \le 2D_{|\partial g|}(f) \lesssim (|\partial g|_{L^1} + |\partial g|_{L^\infty})|f|_{L^2}^2 + |\partial g|_{L_4^2}|f|_{H_{\mathfrak{r}/2}^s}^2.
		\end{equation*}
		\noindent \underline{\it Estimate of $\mathbb{I}_2$.} We split $\mathbb{I}_2$ into two parts $\mathbb{I}_2^1$ and $\mathbb{I}_2^2$, which contain the non-singular region  $\{|v' - v| > 1\}$ and the singular region $\{|v' - v|\le 1\}$ respectively. Following the notations in Lemma \ref{nonsingular}, we have   
		\begin{equation}\label{nonsigularP}
			\mathbb{I}_2^1\lesssim |\mathcal{N}_1(g, f, \partial f)| + |\mathcal{N}_2(g, f, \partial f)| + |\mathcal{N}_2(g, \partial f, f)| + |\mathcal{N}_3(g, f, \partial f)|,
		\end{equation}
		which implies that    $\mathbb{I}_2^1\lesssim|g|_{L_\mathfrak{r}^1}|f|_{L_\mathfrak{r}^2}|\partial f|_{L^2}$.
		 
		For $\mathbb{I}_2^2$,  by Taylor's expansion that
    $g'_* - g_* = (\nabla g)_* \cdot (v'_* - v_*) + \int_0^1(1 - \kappa_1)(v'_* - v_*)^\tau\nabla^2g(\xi_1)(v'_* - v_*)d\kappa_1$with $\xi_1 = v_* + \kappa_1(v'_* - v_*)$, we have  $\mathbb{I}_2^2 \lesssim \sum_{i=1}^2 \mathbb{I}_2^{2,i}$, where
	\begin{equation*}
		\begin{aligned}
			 \mathbb{I}_2^{2,1} &= \int|(\nabla g)_*(f' - f)\partial f||v - v_*|^{\gamma + 1}\theta b(\cos\theta)1_{|v' - v| \le 1}d\sigma dvdv_*;\\
			 \mathbb{I}_2^{2,2} &= \int|\nabla^2g(\xi_1)||f\partial f||v - v_*|^{\gamma + 2}\theta^2b(\cos\theta)1_{|v' - v| \le 1}d\sigma dvdv_*d\kappa_1;\\
			 \mathbb{I}_2^{2,3} &= \int|\nabla^2g(\xi_1)||f'\partial f||v - v_*|^{\gamma + 2}\theta^2b(\cos\theta)1_{|v' - v| \le 1}d\sigma dvdv_*d\kappa_1.\\
		\end{aligned}
		\end{equation*}

		\noindent$\bullet$ {\it Estimate of $\mathbb{I}_2^{2,1}$}. We further split $\mathbb{I}_2^{2,1}$ into two parts $\mathbb{I}_2^{2,1,1}$ and $\mathbb{I}_2^{2,1,2}$, which contain the integral region $\{|v - v_*| \le 1\}$ and $\{|v - v_*| > 1\}$ respectively. For  $\mathbb{I}_2^{2,1,1}$,  using Taylor's expansion once again for $f$ to have
		 $f' - f = (v' - v) \cdot \int_0^1\nabla f(\xi)d\kappa$ with $\xi = v + \kappa(v' - v)$, we derive that
		 \beno \mathbb{I}_2^{2,1,1} \lesssim \int|(\nabla g)_*(\nabla f)(\xi)\partial f|\theta^2b(\cos\theta)d\sigma dvdv_*d\kappa, \eeno 
		 where  we use the  facts  that 
		 $|v - v_*|^{\gamma + 2}\mathrm{1}_{|v-v_*|\le1} \le 1$ and $|v'_* - v_*| = |v' - v|\sim |v - v_*|\theta$. By Cauchy-Schwartz inequality, we get that
		 \[ \mathbb{I}_2^{2,1,1}\lesssim \bigg(\int|(\nabla g)_*||\nabla f(\xi)|^2\theta^2b(\cos\theta)d\sigma dvdv_*d\kappa\bigg)^{\f12}\bigg(\int|(\nabla g)_*||\partial f|^2\theta^2b(\cos\theta)d\sigma dvdv_*d\kappa\bigg)^{\f12}.\]
		From the change of variables $v \to \xi$ and the fact
		\begin{equation}\label{xivchange}
	\det|\frac{d\xi}{dv}| = (1 - \frac{\kappa}{2})^3(1 + \frac{\kappa}{2 - \kappa}\cos\theta) \sim 1. \text{ for } \kappa \in [0, 1],  
		\end{equation}
		we conclude that $ \mathbb{I}_2^{2,1,1}\lesssim |\nabla g|_{L^1}|f|_{H^1}|\partial f|_{L^2}$. 

		To estimate $\mathbb{I}_2^{2,1,2}$, we use the fact  
		 $\langle v' \rangle(f' - f) = [(\langle v \rangle f)' - \langle v \rangle f] + (\langle v \rangle - \langle v' \rangle)f$
		  to yield that 
		 $ \mathbb{I}_2^{2,1,2} \le A_1+A_2$, where
		\begin{equation*}
		\begin{aligned}
			 A_1&:= \int|(\nabla g)_*||(\langle v \rangle f)' - \langle v \rangle f||\partial f|\theta\langle v' \rangle^{-1}|v - v_*|B1_{|v' - v| \le 1 < |v - v_*|}d\sigma dvdv_*;\\
			A_2 &:= \int|(\nabla g)_*||(\langle v \rangle - \langle v' \rangle)f||\partial f|\theta\langle v' \rangle^{-1}|v - v_*|^{\gamma + 1}b(\cos\theta)1_{|v' - v| \le 1 < |v - v_*|}d\sigma dvdv_*.\\
		\end{aligned}
		\end{equation*}
		Since  $\langle v' \rangle^{-1}|v - v_*| \lesssim \langle v_* \rangle$,   Cauchy-Schwartz inequality implies that
		\begin{equation*}
			A_1 \lesssim \int|\langle v_* \rangle(\nabla g)_*||(\langle v \rangle f)' - \langle v \rangle f||\partial f|\theta B1_{|v - v_*| > 1}d\sigma dvdv_* \le A_{1,1}^{1/2} \times A_{1, 2}^{1/2},
		\end{equation*}
		where $A_{1,1}:=\int|\langle v_* \rangle(\nabla g)_*||(\langle v \rangle f)' - \langle v \rangle f|^2Bd\sigma dvdv_*$ and $A_{1,2}:=\int|\langle v_* \rangle(\nabla g)_*||\partial f|^2\theta^2B1_{|v - v_*| > 1}d\sigma dvdv_*$.
		Due to Lemma \ref{coerupper}, we have
		\begin{equation*}
			A_{1, 1} = D_{|\langle \cdot \rangle\nabla g|}(\langle v \rangle f) \lesssim (|\langle \cdot \rangle\nabla g|_{L^1} + |\langle \cdot \rangle\nabla g|_{L^\infty})|f|_{L_1^2}^2 + |\langle \cdot \rangle\nabla g|_{L_4^2}|f|_{H_{\mathfrak{r}/2 + 1}^s}^2.
		\end{equation*}
		Since  $|v - v_*|^\gamma\mathrm{1}_{|v - v_*| > 1} \le 1$,    \eqref{kernel1} implies that $A_{1,2}\lesssim |\langle \cdot\rangle\nabla g|_{L^1}|\partial f|_{L^2}^2$. Then we conclude that  \[A_1\lesssim (|\langle \cdot \rangle\nabla g|_{L^1} + |\langle \cdot \rangle\nabla g|_{L^\infty})|f|_{L_1^2}^2 + |\langle \cdot \rangle\nabla g|_{L_4^2}|f|_{H_{\mathfrak{r}/2 + 1}^s}^2 + |\langle \cdot \rangle\nabla g|_{L^1}|\partial f|_{L^2}^2.\]

		For $A_2$, use the inequality $|\langle v' \rangle - \langle v \rangle| \lesssim |v' - v| \sim \theta|v - v_*|$, then  we have
		\begin{equation*}
			A_2 \lesssim \int|(\nabla g)_*||f\partial f|\theta^2|v - v_*|^{\gamma + 2}b(\cos\theta)1_{|v' - v| \le 1}d\sigma dvdv_*.
		\end{equation*}
		Following the notations in Lemma \ref{singular}, we have
		\begin{equation*}
			A_2 \lesssim \mathcal{J}_1(|\nabla g|, |f|, |\partial f|) \lesssim |\nabla g|_{L_\mathfrak{r}^1}|f|_{L_\mathfrak{r}^2}|\partial f|_{L^2}.
		\end{equation*}

		\noindent$\bullet$ {\it Estimate of $\mathbb{I}_2^{2,2}$ and $\mathbb{I}_2^{2,3}$}. We observe that
		\begin{equation}\label{xiv*change}
		\begin{aligned}
			\det|\frac{d\xi_1}{dv_*}| &= (1 - \frac{\kappa_1}{2})^3(1 + \frac{\kappa_1}{2 - \kappa_1}\cos\theta) \sim 1, \text{ for } \kappa_1 \in [0, 1], \theta \in [0, \frac{\pi}{2}].\\
		\end{aligned}
		\end{equation}
		If we denote $v_* = \psi(\xi_1)$, then $|v' - v| = \sin\frac{\theta}{2}|v - \psi(\xi)|$, and
		\begin{equation*}
		\begin{aligned}
			\mathbb{I}_2^{2,2} &= \int|\nabla^2g(\xi_1)||f\partial f||v - \psi(\xi_1)|^{\gamma + 2}\theta^2b(\cos\theta)1_{\sin\frac{\theta}{2} \le |v - \psi(\xi_1)|^{-1}}d\sigma dvd\xi_1d\kappa_1;\\
			\mathbb{I}_2^{2,3} &= \int|\nabla^2g(\xi_1)||f'\partial f||v - \psi(\xi_1)|^{\gamma + 2}\theta^2b(\cos\theta)1_{\sin\frac{\theta}{2} \le |v - \psi(\xi_1)|^{-1}}d\sigma dvd\xi_1d\kappa_1.\\
		\end{aligned}
		\end{equation*}
		Recall that $\xi_1 = v_* + \kappa_1(v'_* - v_*)$, and $|v'_* - v_*| = |v' - v| \le 1$. Then
		\begin{equation*}
			\langle v_* \rangle \lesssim \langle \xi_1 \rangle + \kappa_1\langle v'_* - v_* \rangle \lesssim \langle \xi_1 \rangle, \:\:\: \langle \xi_1 \rangle \lesssim \langle v_* \rangle + \kappa_1\langle v'_* - v_* \rangle \lesssim \langle v_* \rangle,
		\end{equation*}
		which yields that $\langle \psi(\xi_1) \rangle \sim \langle \xi_1 \rangle$. From this together with the fact that $|v - \psi(\xi_1)| = |v - v_*| \sim |v - \xi_1|$, we derive from Lemma \ref{singular}  that 
		\begin{equation*}
			\mathbb{I}_2^{2,2}+\mathbb{I}_2^{2,3} \lesssim |\nabla^2g|_{L_\mathfrak{r}^1}|f|_{L_\mathfrak{r}^2}|\partial f|_{L^2}.
		\end{equation*}

		 The desired result follows by putting together all the estimates. We end the proof.
	\end{proof}

 Now we are in a position to get the estimate of $I_2$.
	\begin{lemma}\label{ControlofI2}
		It holds that $|P|+I_2 \lesssim |\!|\!|G|\!|\!|_5|\!|\!|g|\!|\!|_N^2$.
	 \end{lemma}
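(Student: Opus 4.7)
The plan is to reduce to bounding $|P|$ alone, since $I_2 = P + Q - R$ and the preceding paragraphs already give $|Q|+|R| \lesssim |\!|\!|G|\!|\!|_5|\!|\!|g|\!|\!|_N^2$. The key observation is that with the substitution $g \mapsto (\mathsf{D}^{\alpha_1}G)(x,\cdot)$, $f \mapsto (\langle z\rangle^{mn}\mathsf{D}^{\alpha_2}g)(x,\cdot)$, and $\partial = \mathsf{D}^{\alpha_1}$ (where $|\alpha_1| = 1$), the $v$-integrand of $P$ is exactly the object controlled by Lemma~\ref{partial}. First I would apply that lemma pointwise in $x$, and then integrate in $x$ using Cauchy--Schwarz or H\"older to split each resulting term into an $L^\infty_x$-factor coming from $G$ times an $L^2_x$-squared factor coming from $g$.

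To treat the $L^\infty_x$-factors, I would invoke the Sobolev embedding $H^2_x \hookrightarrow L^\infty_x$ exactly as in \eqref{SobolevD}, combined with \eqref{Upperxv} to trade the $\partial_v,\partial_x$ derivatives that appear on $G$ through the $\nabla g$, $\nabla^2 g$ slots of Lemma~\ref{partial} for the commuting vector fields $\mathsf{D}$, at the cost of harmless $\langle t\rangle^{-1}$ factors. Every such norm, e.g.\ $\|\nabla^2 \mathsf{D}^{\alpha_1}G\|_{L^\infty_xL^1_\mathfrak{r}}$ or $\|\langle\cdot\rangle\nabla \mathsf{D}^{\alpha_1}G\|_{L^\infty_xL^2_4}$, will then be absorbed into $|\!|\!|G|\!|\!|_5$ because $|\alpha_1|=1$ and the Sobolev step costs at most two further $x$-derivatives.

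The $g$-side factors I would handle via the interpolation identities \eqref{TransDW}--\eqref{TransDW2}, which convert the weighted fractional Sobolev norms of $f = \langle z\rangle^{mn}\mathsf{D}^{\alpha_2}g$ (such as $|f|_{H^s_{\gamma/2+s+1}}$ and $|f|_{H^s_{\mathfrak{r}/2+1}}$) into combinations of $|f|_{H^1}$ and $|f|_{L^2_m}$. Since $|\alpha_2|\le N-n-1$, the $H^1_v$ piece lifts, via \eqref{Upperxv}, to $\|\langle z\rangle^{mn}\mathsf{D}^{\alpha_2+\beta}g\|_{L^2_xL^2}$ with $|\alpha_2+\beta|\le N-n$, i.e.\ a summand of the level-$n$ block of $|\!|\!|g|\!|\!|_N^2$; the $L^2_m$ piece, together with the prescribed $\langle z\rangle^{mn}$ weight, produces a $\langle z\rangle^{m(n+1)}$ weight that fits into the level-$(n+1)$ block, provided $n+1\le N$. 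Summing over $n\le N$ and $|\alpha|\le N-n$ should then close the bound at the desired form.

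The main obstacle I anticipate is precisely this weight bookkeeping: the worst contributions in Lemma~\ref{partial}, namely those involving $|f|_{H^s_{\gamma/2+s+1}}$, $|f|_{H^s_{\mathfrak{r}/2+1}}$ and $|f|_{L^2_\mathfrak{r}}$, require trading an extra polynomial weight on $f$ against fractional Sobolev regularity, and closing everything with only the norm $|\!|\!|g|\!|\!|_N$ at one's disposal is exactly what forces the quantitative conditions $(1-s)m\ge 2$ and $(1-2s)m\ge 1$ (when $s<1/2$) that were built into the definition \eqref{FSGWP}; likewise, the constraint $|\alpha|\le N-n$ in that definition, together with the convention that the top weight level is $n=N$, is what ensures that the weight upgrade $mn\mapsto m(n+1)$ never escapes the norm. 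Once this accounting is verified, the estimate for $|P|$ stacks on top of the bounds already proved for $Q$ and $R$ to yield the full lemma.
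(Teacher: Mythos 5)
Your proposal is correct and follows essentially the same route as the paper's proof: reduce to $P$ (since $Q$ and $R$ are bounded in the text immediately preceding the lemma), invoke Lemma~\ref{partial}, package the $G$-side norms via Sobolev embedding and \eqref{Upperxv} into $|\!|\!|G|\!|\!|_5$, trade the weighted fractional Sobolev norms of $f=\langle z\rangle^{mn}\mathsf{D}^{\alpha_2}g$ using the interpolation \eqref{TransDW}--\eqref{TransDW1} into $H^1$ and $L^2_m$ pieces, and then perform the $\langle z\rangle^{mn}\mapsto\langle z\rangle^{m(n+1)}$ weight-bookkeeping against the definition \eqref{FSGWP}. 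The only thing you leave implicit is that the constraint $n+1\le N$ is automatic here, because $I_2$ is nonzero only when $|\alpha|\ge 1$, forcing $n\le N-1$; otherwise your accounting, including the observation that the conditions $(1-s)m\ge 2$ and (when $s<1/2$) $(1-2s)m\ge 1$ are exactly what make the interpolation close, matches the paper's (much terser) argument.
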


	\begin{proof} Recall that $P = \langle \mathsf{Q}_1(\mathsf{D}^{\alpha_1}G, \langle z \rangle^{mn}\mathsf{D}^{\alpha_2}g), \mathsf{D}^{\alpha_1}(\langle z \rangle^{mn}\mathsf{D}^{\alpha_2}g) \rangle$ and  Lemma \ref{partial} can be rewritten as	\begin{equation*}
			|\langle \mathsf{Q}_1(g, f), \partial f \rangle_v| \lesssim (|\partial g|_{H^2} + |\partial g|_{L_4^2} + |g|_{H_1^2} + |g|_{L_5^2})|f|_{H_2^s}^2 + |g|_{H_4^2}(|f|_{H_2^s} + |f|_{H^1})|\partial f|_{L^2} + |g|_{H_3^1}|\partial f|_{L^2}^2.
		\end{equation*}
		The   desired result follows thanks to \eqref{TransDW1} and the definition \eqref{FSGWP}. 
	\end{proof}


	\subsection{Estimate of $I_3$ and $I_4$} We want to prove 
	\begin{lemma}\label{ControlofI34} If $N\ge6$, then $|I_3|+|I_4|\lesssim (|\!|\!|G|\!|\!|_N+1)|\!|\!|g|\!|\!|_N^2$.
\end{lemma}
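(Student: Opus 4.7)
The plan is to estimate $I_4$ and $I_3$ separately, each reducing to a Sobolev bookkeeping on the energy norm $|\!|\!|\cdot|\!|\!|_N$ aided by the interpolation inequalities \eqref{TransDW1}--\eqref{TransDW2}. The design choice $(1-s)m\ge 2$ (and $(1-2s)m\ge 1$ when $s<1/2$) in the definition of $|\!|\!|\cdot|\!|\!|_N$ is precisely what converts the weighted fractional norm $H^s_{\text{weight}}$ into $H^1+L^2_m$, trading one fractional derivative for exactly one weight level of $\langle z\rangle^m$, making the estimate compatible with the telescoping in $n$ that defines $|\!|\!|\cdot|\!|\!|_N$.

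For $I_4$, since $\mathbf{M}$ is Schwartz, every factor $\|\langle z\rangle^{mn}\mathsf{D}^{\alpha_2}\mathbf{M}\|_{L^\infty_xH^s_{\gamma/2+s}}$ is a harmless universal constant; this is where the ``$+1$'' in the bound originates. A standard weighted upper bound for $\mathsf{Q}$ of the form $|\langle\mathsf{Q}(F,G),H\rangle_v|\lesssim |F|_{L^2_\ell}|G|_{H^s_{\gamma/2+s}}|H|_{H^s_{\gamma/2+s}}$ followed by Cauchy--Schwartz in $x$ reduces $I_4$ to
\[
|I_4|\lesssim \sum_{\alpha_1+\alpha_2=\alpha}\|\mathsf{D}^{\alpha_1}g\|_{L^2_xL^2_\ell}\,\|\langle z\rangle^{mn}\mathsf{D}^\alpha g\|_{L^2_xH^s_{\gamma/2+s}},
\]
and both factors are controlled by $|\!|\!|g|\!|\!|_N$ via \eqref{TransDW1}, yielding $|I_4|\lesssim |\!|\!|g|\!|\!|_N^2$.

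For $I_3$, the same upper bound and Cauchy--Schwartz in $x$ reduce matters to bounding $\|\mathsf{D}^{\alpha_1}G\|_{L^\infty_xL^2_\ell}$ multiplied by weighted $L^2_xH^s$ norms on $g$. I split according to the size of $|\alpha_1|$. When $|\alpha_1|\le N-3$, Sobolev embedding $H^2_x(\mathbb{R}^3)\hookrightarrow L^\infty_x$ consumes two extra derivatives and gives $\|\mathsf{D}^{\alpha_1}G\|_{L^\infty_xL^2_\ell}\lesssim\sum_{|\beta|\le 2}\|\mathsf{D}^{\alpha_1+\beta}G\|_{L^2_xL^2_\ell}\lesssim |\!|\!|G|\!|\!|_N$, since $|\alpha_1|+2\le N-1$. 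When $|\alpha_1|\in\{N-2,N-1\}$ the constraint $|\alpha_1|+|\alpha_2|\le N-n$ forces $n=1$ and $|\alpha_2|\le 1$, so I swap roles and put $\mathsf{D}^{\alpha_1}G$ in $L^2_xL^2_\ell$ (directly inside the $n=1$ slice of $|\!|\!|G|\!|\!|_N$) while $\mathsf{D}^{\alpha_2}g$ goes in $L^\infty_xH^s_{\gamma/2+s}$ via Sobolev plus \eqref{TransDW1}, since $|\alpha_2|+2\le 3\le N-1$. For this swap I split $\mathsf{Q}=\mathsf{Q}_1+\mathsf{Q}_2$: Proposition \ref{Q2Estimate} dispatches the $\mathsf{Q}_2$ part requiring only $L^2$ on the test function, while for $\mathsf{Q}_1$ I use the integration-by-parts identity of Lemma \ref{itgbp} to transfer one derivative off the top-order test function, in the spirit of Lemma \ref{partial}.

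The main obstacle is that the top-order factor $\langle z\rangle^{mn}\mathsf{D}^\alpha g$ with $|\alpha|=N-n$ saturates the energy norm, so a naive use of \eqref{TransDW1} would slip the accounting one derivative outside of $|\!|\!|\cdot|\!|\!|_N$. This is resolved by absorbing the top-order $H^s_{\gamma/2+s}$ contribution through Young's inequality into the positive dissipation $\mathfrak{D}_G(\langle z\rangle^{mn}\mathsf{D}^\alpha g)$ produced by $I_1$ in Lemma \ref{ControlofI1}: Proposition \ref{coercivity} together with Corollary \ref{coerupper} shows that, pointwise in $x$, this dissipation controls the weighted $H^s_{\mathfrak{r}/2}$ mass of its argument modulo $L^2$ residuals that already sit inside $|\!|\!|g|\!|\!|_N^2$. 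After this absorption, every remaining term is quadratic in $|\!|\!|g|\!|\!|_N$ multiplied either by $|\!|\!|G|\!|\!|_N$ (from the $\mathsf{D}^{\alpha_1}G$ factors in $I_3$) or by the universal constant coming from $\mathbf{M}$ in $I_4$, which gives the claimed $(|\!|\!|G|\!|\!|_N+1)|\!|\!|g|\!|\!|_N^2$ bound.
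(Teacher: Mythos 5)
Your bookkeeping skeleton for $I_3$ (split by $|\alpha_1|$, Sobolev to pay for $L^\infty_x$, and the interpolations \eqref{TransDW1}--\eqref{TransDW2} to trade fractional regularity for $\langle z\rangle^m$ weight) is the same idea the paper uses, and your treatment of $I_4$ is essentially correct. However, there is a genuine gap in the core of your argument for $I_3$.

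You invoke the symmetric upper bound $|\langle\mathsf{Q}(F,G),H\rangle_v|\lesssim |F|_{L^2_\ell}|G|_{H^s_{\gamma/2+s}}|H|_{H^s_{\gamma/2+s}}$, which places the top-order test function $\langle z\rangle^{mn}\mathsf{D}^\alpha g$ (with $|\alpha|=N-n$) in $H^s$, and you correctly note that this lands one derivative or one weight level outside the energy norm. But the resolution you propose does not work. You claim to absorb the $H^s_{\mathfrak r/2}$ mass of that factor into the dissipation $\mathfrak D_G(\langle z\rangle^{mn}\mathsf D^\alpha g)$ generated by $I_1$, citing Proposition \ref{coercivity} and Corollary \ref{coerupper}. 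Corollary \ref{coerupper}, however, states $D_g(f)\lesssim |g|_{L^2_4}|f|^2_{H^s_{\mathfrak r/2}}+\dots$ -- it bounds the dissipation \emph{from above} by the $H^s$ norm, not the reverse. Proposition \ref{coercivity} likewise only gives an upper bound for $\langle\mathsf Q(g,f),f\rangle_v + D_g(f)$. Nowhere in the paper is a coercivity (lower) bound of the form $D_g(f)\gtrsim |f|^2_{H^s_{\mathfrak r/2}}$ available, so the absorption step has no support; the mechanism you have in mind is the Young-inequality trick used in Lemma \ref{ControlofJ1}/\ref{Comm1}, but that requires the quantity to be bounded by $D_g(\cdot)^{1/2}$ times a remainder, which you have not exhibited for $I_3$. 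Similarly, Lemma \ref{itgbp}/\ref{partial} are tailored to a single-derivative commutator ($|\alpha_1|=1$, i.e.\ $I_2$), so they cannot ``transfer one derivative off the top-order test function'' when $|\alpha_1|\ge 2$.

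The point is that there is no obstacle to resolve in the first place: the paper uses the asymmetric split $a=2s,\ b=0$ in Corollary \ref{HLB1}, giving $|I_{3,\alpha_1,\alpha_2}|\lesssim\int|\mathsf D^{\alpha_1}G|_{L^2_4}\,|\langle z\rangle^{mn}\mathsf D^{\alpha_2}g|_{H^{2s}_{\gamma+2s}}\,|\langle z\rangle^{mn}\mathsf D^\alpha g|_{L^2}\,dx$. The top-order factor then sits in $L^2$, where it lies squarely inside $|\!|\!|g|\!|\!|_N$, and no dissipation is needed anywhere in the estimate of $I_3$ or $I_4$. The $+1$ or $+2$ in the derivative count comes from \eqref{TransDW2} (if $s<1/2$) or \eqref{TransDW1} (if $s\ge 1/2$) applied to the \emph{middle} factor only, which is what drives the threshold $N\ge 5$ versus $N\ge 6$. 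You should rebuild the proof around this asymmetric $\mathsf Q$ estimate; once you do, the case analysis on $|\alpha_1|$ you already set up closes the argument without any appeal to the dissipation or to integration by parts.
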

\begin{proof} We split the proof into three cases.

	\underline{\it (1). Estimate of $I_3$ for $s < 1/2$.} In this situation, we recall that
	\begin{equation*}
		I_3 = \sum_{\substack{\alpha_1 + \alpha_2 = \alpha\\ |\alpha_1| > 0}}\underbrace{\langle \mathsf{Q}(\mathsf{D}^{\alpha_1}G, \langle z \rangle^{mn}\mathsf{D}^{\alpha_2}g), \langle z \rangle^{mn}\mathsf{D}^\alpha g \rangle}_{:=I_{3,\alpha_1,\alpha_2}}.
	\end{equation*}
	By Corollary \ref{HLB1}, we have
		$|I_{3,\alpha_1,\alpha_2}|\lesssim \int|\mathsf{D}^{\alpha_1}G|_{L_4^2}|\langle z \rangle^{mn}\mathsf{D}^{\alpha_2}g|_{H_{\gamma + 2s}^{2s}}|\langle z \rangle^{mn}\mathsf{D}^\alpha g|_{L^2}dx$.  
	Using \eqref{TransDW2} and imposing $L_x^\infty$ norm on $\mathsf{D}^{\alpha_1}G$ or $\langle z \rangle^{mn}\mathsf{D}^{\alpha_2}g$, we have
	\begin{equation*}
		|I_{3,\alpha_1,\alpha_2}| \lesssim |\!|\!|G|\!|\!|_{|\alpha_1| + 3}|\!|\!|g|\!|\!|_{n + |\alpha_2| + 1}|\!|\!|g|\!|\!|_N, \:\:\: |I_{3,\alpha_1,\alpha_2}| \lesssim |\!|\!|G|\!|\!|_{|\alpha_1| + 1}|\!|\!|g|\!|\!|_{n + |\alpha_2| + 3}|\!|\!|g|\!|\!|_N
	\end{equation*}
	By \eqref{FSGWP}, we have $n, |\alpha_1| \ge 1$ and $n + |\alpha_1| + |\alpha_2| \le N$. If $|\alpha_1| \le N - 3$,  the first estimate in the above implies that $|I_{3,\alpha_1,\alpha_2}| \lesssim |\!|\!|G|\!|\!|_N|\!|\!|g|\!|\!|_N^2$. If $|\alpha_1| \ge N - 2$, then $n + |\alpha_2| \le 2$ and the second estimate yields that $ |I_{3,\alpha_1,\alpha_2}|  \lesssim |\!|\!|G|\!|\!|_N|\!|\!|g|\!|\!|_N^2$ if $N\ge5$. We conclude that if $N\ge5$,
	\begin{equation*}
		|I_3| \lesssim |\!|\!|G|\!|\!|_N|\!|\!|g|\!|\!|_N^2.
	\end{equation*}

	\underline{\it (2). Estimate of $I_3$ for $s\ge1/2$.} In this case, we have $I_3=\sum\limits_{\substack{\alpha_1 + \alpha_2 = \alpha\\ |\alpha_1| \ge 2}}I_{3,\alpha_1,\alpha_2}$. Following the similar argument in the above, we have
	\begin{equation*}
		|I_{3,\alpha_1,\alpha_2}|\lesssim |\!|\!|G|\!|\!|_{|\alpha_1| + 3}|\!|\!|g|\!|\!|_{n + |\alpha_2| + 2}|\!|\!|g|\!|\!|_N, \:\:\: |I_{3,\alpha_1,\alpha_2}| \lesssim |\!|\!|G|\!|\!|_{|\alpha_1| + 1}|\!|\!|g|\!|\!|_{n + |\alpha_2| + 4}|\!|\!|g|\!|\!|_N,
	\end{equation*}
	which are enough to conclude that if $N\ge6$, then 
	\begin{equation*}
		|I_3| \lesssim |\!|\!|G|\!|\!|_N|\!|\!|g|\!|\!|_N^2.
	\end{equation*}
	 
\underline{\it (3). Estimate of $I_4$.} We recall that
	\begin{equation*}
		I_4 = \sum_{\alpha_1 + \alpha_2 = \alpha}\langle \mathsf{Q}(\mathsf{D}^{\alpha_1}g, \langle z \rangle^{mn}\mathsf{D}^{\alpha_2}\mathbf{M}), \langle z \rangle^{mn}\mathsf{D}^\alpha g \rangle.
	\end{equation*}
	By Corollary \ref{HLB1}, it is easy to check that $|I_4|\lesssim |\!|\!|g|\!|\!|_N^2$. This ends the proof. \end{proof}


	\subsection{Estimate of $J_1$} We begin with a lemma:
	\begin{lemma}\label{Comm1}
		For $\eta \in (0, 1), n\ge4$ and smooth functions $g= g(v) \ge 0, h = h(v)$ and $f = f(v)$, we have
		\beno
			\langle \langle z \rangle^n\mathsf{Q}(g, h) - \mathsf{Q}(g, \langle z \rangle^nh), f \rangle_v& \lesssim& (|g|_{L_4^2} + |g|_{H^2})|\langle z \rangle^nh|_{L^2}|f|_{L^2} 
				+ |g|_{L_n^2}|h|_{L_2^2}|f|_{L^2} \\&&+ \eta D_g(f) + \eta^{-1}|g|_{L_4^2}|\langle z \rangle^nh|_{L^2}^2.
	\eeno
	\end{lemma}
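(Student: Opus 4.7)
The plan is to exploit the decomposition $\mathsf{Q}=\mathsf{Q}_1+\mathsf{Q}_2$: since $\mathsf{Q}_2(g,h)=h\int(g'_*-g_*)B\,d\sigma dv_*$ is pointwise multiplication of $h$ by an $x,v$-function, the commutator with the $v$-weight $W(v):=\langle z\rangle^n$ (with $x$ frozen) vanishes, and one is left with
\begin{equation*}
\langle z\rangle^n\mathsf{Q}(g,h)-\mathsf{Q}(g,\langle z\rangle^n h)=\int g'_*\bigl(W-W'\bigr)h'\,B\,d\sigma\,dv_*.
\end{equation*}
Pairing with $f$ then produces the single integral
$\mathcal{I}:=\int g'_*(W-W')h'f\,B\,d\sigma dv_* dv$,
which I would split as $\mathcal{I}=\mathcal{I}_{\text{ns}}+\mathcal{I}_{\text{s}}$ by inserting the indicators $\mathbf{1}_{|v'-v|>1}$ and $\mathbf{1}_{|v'-v|\le1}$, mirroring the strategy already used in the proof of Lemma \ref{partial}.

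For $\mathcal{I}_{\text{ns}}$, I would use the crude bound $|W-W'|\le W+W'$ combined with the pre-post collisional change of variables on the $W'$-term to move the weight from $h'$ onto a power of the relative velocity; on the non-singular region $\sin\theta\,|v-v_*|\gtrsim 1$ tames the angular singularity so that $\int B\mathbf{1}_{|v-v'|>1}d\sigma$ is essentially a power of $|v-v_*|^{\gamma}$. Applying the auxiliary estimates of Lemma \ref{nonsingular} to $(g,Wh,f)$ and to $(g,h,f)$ will produce the contributions $|g|_{L^2_n}|h|_{L^2_2}|f|_{L^2}$ and $|g|_{L^2_4}|Wh|_{L^2}|f|_{L^2}$.

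For $\mathcal{I}_{\text{s}}$ I would Taylor-expand $W(v)-W(v')$ around $v'$ to second order, writing
\begin{equation*}
W(v)-W(v')=\nabla W(v')\cdot(v-v')+\int_0^1(1-\kappa)(v-v')^\tau \nabla^2W(v_\kappa)(v-v')\,d\kappa,
\end{equation*}
with $v_\kappa=v'+\kappa(v-v')$. The quadratic remainder generates a factor $|v-v'|^2\sim\theta^2|v-v_*|^2$ that kills the angular singularity; using $\langle v_\kappa\rangle\sim\langle v'\rangle$ (since $|v-v'|\le1$), $|\nabla^2W|\lesssim\langle z\rangle^{n-2}$, and the singular-kernel bound of Lemma \ref{singular} delivers the $|g|_{H^2}|Wh|_{L^2}|f|_{L^2}$ contribution. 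The first-order piece is the delicate one: after pre-post change of variables, its skew part under $v\leftrightarrow v'$ can be rewritten so that the factor $(h'f-hf')$ or simply $(f-f')$ emerges, and Cauchy-Schwarz against the dissipation functional $D_g$ produces exactly the splitting
\begin{equation*}
\eta D_g(f)+\eta^{-1}\int g'_*|\nabla W(v')|^2|v-v'|^2|h'|^2\,B\,d\sigma dv_*dv.
\end{equation*}
Since $|\nabla W(v')|\lesssim\langle v'\rangle^{-1}W(v')$ and $|v-v'|^2\theta^{-2}|v-v_*|^{-2}\lesssim1$, the last integral reduces via the Cancellation Lemma to $|g|_{L^2_4}|Wh|_{L^2}^2$.

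The main obstacle is precisely the symmetrization of the first-order Taylor term: naively estimating $\nabla W(v')\cdot(v-v')$ by absolute values forces one to absorb a full factor of $\theta|v-v_*|^{1+\gamma/2}$, which is not square-integrable near $\theta=0$ in $d\sigma$ when $s\ge 1/2$. One must therefore genuinely exploit the pre-post collisional symmetry so that the singular $\theta^{-1-2s}$ is always paired with a difference $h'f-hf'$ (or $f-f'$) that produces the Dirichlet form $D_g$, and then the weight loss $\langle v'\rangle^{-1}$ coming from $\nabla W$ is what makes the leftover quantity $|\nabla W|^2|v-v'|^2|h'|^2$ integrable against $B$ and bounded by $|g|_{L^2_4}|Wh|_{L^2}^2$. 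All other terms are essentially routine applications of Lemmas \ref{nonsingular} and \ref{singular} together with Corollary \ref{coerupper}.
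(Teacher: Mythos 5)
Your reformulation of the commutator is correct, the split by $\mathbf{1}_{|v'-v|\le 1}$ vs. $\mathbf{1}_{|v'-v|>1}$ is a natural one, and the singular-region treatment (second-order Taylor plus Cauchy--Schwarz against $D_g$) follows the same mechanism as the paper's $\mathfrak{I}_2$--$\mathfrak{I}_3$. But there is a genuine gap in your treatment of the non-singular piece, and a lesser imprecision in the singular piece.

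The non-singular gap: after $|W-W'|\lesssim W+W'$ and one pre-post change of variables, you are left with integrals of the form $\int g_*(\langle z\rangle^n h)\,f'\,B\,\mathbf{1}_{|v'-v|>1}\,d\sigma\,dv_*\,dv$, which is exactly $\mathcal{N}_2(g,\langle z\rangle^n h,f)$. Lemma~\ref{nonsingular} only gives $|g|_{L^1_{\mathfrak{r}}}|\langle z\rangle^n h|_{L^2_{\omega_1}}|f|_{L^2_{\omega_2}}$ with $\omega_1,\omega_2\ge 0$ and $\omega_1+\omega_2=\mathfrak{r}$: the $|v-v_*|^{\gamma+2s}$ weight arising from $\int b(\cos\theta)\mathbf{1}_{|v'-v|>1}d\sigma\lesssim |v-v_*|^{2s}$ can only land on $h$ or $f$, never on $g$. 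When $\mathfrak{r}=\gamma+2s>0$ (which is within the lemma's hypotheses, since only $\gamma\in(-2,0)$, $s\in(0,1)$ are assumed), this produces a bound of the form $|g|_{L^1_{\mathfrak{r}}}|\langle z\rangle^n h|_{L^2_{\mathfrak{r}}}|f|_{L^2}$, which is strictly larger than the target $|g|_{L^2_4}|\langle z\rangle^n h|_{L^2}|f|_{L^2}$. The paper closes exactly this hole by introducing the extra localizer $\chi=\mathbf{1}_{|v-v_*|\le 4|v_*|}\mathbf{1}_{|v'-v|>1}$: on $\chi$, $|v-v_*|^{\mathfrak{r}}\lesssim\langle v_*\rangle^{\mathfrak{r}}$, so the weight goes onto $g$ (via Lemma~\ref{nonsingular+}, yielding $\mathcal{P}(\langle\cdot\rangle^{\mathfrak{r}}|g|,\langle z\rangle^n|h|,|f|)\lesssim|g|_{L^2_4}|\langle z\rangle^n h|_{L^2}|f|_{L^2}$); on the complementary region $\{|v-v_*|>4|v_*|\}$ the paper still Taylor-expands, so the Hessian's weight slack $\langle z\rangle^{n-2}$ can absorb the factor $|v-v_*|^{\gamma+2}\lesssim\langle v\rangle^{\gamma+2}\lesssim\langle z\rangle^2$. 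Your pure $|v'-v|$-split cannot exploit either mechanism on the piece $\{|v-v_*|>4|v_*|\}\cap\{|v'-v|>1\}$.

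A secondary imprecision: you say the linear Taylor term's ``skew part under $v\leftrightarrow v'$'' produces $(f-f')$. The swap $v\leftrightarrow v'$ alone is not a symmetry; you need the full pre-post change, and even then $g'_*\nabla W(v')h'f-g_*\nabla W(v)hf'$ does not factor. The clean route, and what the paper actually does, is to write $f=f'+(f-f')$: the $(f-f')$ piece goes to $D_g$ exactly as you describe, while the remaining $f'$ piece, after a pre-post change, has the form $\int g_*\nabla W(v)\cdot(v'-v)\,h f\,B\,\mathbf{1}\,d\sigma\,dv_*\,dv$, to which Lemma~\ref{lemsymetric} applies, producing the decisive $\theta^2$ gain (this is $\mathfrak{I}_1$). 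Also, what you call an application of the Cancellation Lemma in bounding $\int g'_*|\nabla W(v')|^2|v-v'|^2|h'|^2 B\,\mathbf{1}\,d\sigma\,dv_*\,dv$ is really just the singular-kernel estimate $\mathcal{J}_1$ of Lemma~\ref{singular} (applied to $(g,\langle z\rangle^{n-1}h,\langle z\rangle^{n-1}h)$ after a pre-post change); the Cancellation Lemma concerns $\int(f'-f)B\,d\sigma$ and is not what is used here.
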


	\begin{proof} It is not difficult to compute that
		\begin{equation*}
			\mathfrak{I}:=\langle \langle z \rangle^n\mathsf{Q}(g, h) - \mathsf{Q}(g, \langle z \rangle^nh), f \rangle_v = \int g_*hf'(\langle z' \rangle^n - \langle z \rangle^n)Bd\sigma dvdv_*.
		\end{equation*}
		Let $\mathsf{W}_n(v):= \langle z \rangle^n$ and  $\chi=\chi(v,v_*,\sigma):=1_{|v - v_*| \le 4|v_*|}1_{|v' - v| > 1}$.  One has
			$\nabla \mathsf{W}_n(v) = nv\langle z \rangle^{n - 2}$ and $\nabla^2\mathsf{W}_n(v) = n\langle z \rangle^{n - 2}I + n(n - 2)v \otimes v\langle z \rangle^{n - 4}$.
		By Taylor's expansion
		\begin{equation*}
			\langle z' \rangle^n - \langle z \rangle^n = (v' - v) \cdot \nabla \mathsf{W}_n(v) + \frac{1}{2}(v' - v)^\tau\int_0^1\nabla^2\mathsf{W}_n(\xi)d\kappa (v' - v),
		\end{equation*}
		where $\xi = v + \kappa(v' - v)$, we decompose $\mathfrak{I}$ into four parts, i.e., $\mathfrak{I}=\sum_{i=1}^4\mathfrak{I}_i$, where  
		\begin{equation*}
		\begin{aligned}
			&\mathfrak{I}_1 = \int g_*hf(v' - v) \cdot \nabla \mathsf{W}_n(v)B(1-\chi)d\sigma dvdv_*; 
			\mathfrak{I}_2 = \int g_*h(f' - f)(v' - v) \cdot \nabla \mathsf{W}_n(v)B(1-\chi)d\sigma dvdv_*;\\
			 &\mathfrak{I}_3 = \f12\int g_*hf'(v' - v)^\tau\nabla^2\mathsf{W}_n(\xi) (v' - v)B(1-\chi)d\kappa d\sigma dvdv_*; 
			\mathfrak{I}_4 = \int g_*hf'(\langle z' \rangle^n - \langle z \rangle^n)B\chi d\sigma dvdv_*.\\
		\end{aligned}
		\end{equation*}

		\noindent\underline{\it Estimate of $\mathfrak{I}_1$}.	By Lemma \ref{lemsymetric} and the fact that $\nabla \mathsf{W}_n(v) \lesssim \langle z \rangle^{n - 1}$, we first get that
		\begin{equation*}
		\begin{aligned}
			|\mathfrak{I}_1| 
			&\lesssim \int|g_*\langle z \rangle^{n - 1}hf||v - v_*|^{\gamma + 1}\theta^2b(\cos\theta)(1-\chi)d\sigma dvdv_*.\\
		\end{aligned}
		\end{equation*} 
		Since $1-\chi \le 1_{|v - v_*| > 4|v_*|} + 1_{|v' - v| \le 1}$, we easily obtain that
        \[ |\mathfrak{I}_1|\lesssim (|g|_{L_3^2} + |g|_{H^2})|\langle z \rangle^nh|_{L^2}|f|_{L^2}. \]

		\noindent\underline{\it Estimate of $\mathfrak{I}_2$.} Using the condition that $g \ge 0$ and the notations in  Lemma \ref{singular},     we get from Cauchy-Schwartz inequality that 
		\beno
			&&|\mathfrak{I}_2|   
			\lesssim D_g(f)^{\f12}\bigg(\int g_*\langle z \rangle^{2n - 2}h^2|v - v_*|^{\gamma + 2}\theta^2b(\cos\theta)(1-\chi)d\sigma dvdv_*\bigg)^{\f12}\\&&\lesssim D_g(f)^{\f12}\bigg( \int g_*\langle z \rangle^{2n}h^2dvdv_*+\mathcal{J}_1(g, \langle z \rangle^{n - 1}h, \langle z \rangle^{n - 1}h)\bigg)\lesssim   \eta D_g(f) + \eta^{-1}|g|_{L_4^2}|\langle z \rangle^nh|_{L^2}^2,
		\eeno
        where we use again the fact  $1-\chi \le 1_{|v - v_*| > 4|v_*|} + 1_{|v' - v| \le 1}$.

		\noindent\underline{\it Estimate of $\mathfrak{I}_3$.} It is easy to verify that  $\langle \xi \rangle \lesssim \langle v \rangle$ on the support of $\chi$, which implies that $|\nabla^2\mathsf{W}_n(\xi)|\lesssim \langle z \rangle^{n - 2}$. Following the notations in Lemma \ref{kernel} and Lemma \ref{singular}, we have
		\begin{equation*}
			 |\mathfrak{I}_3|\lesssim \int|g_*\langle z \rangle^{n - 2}hf'||v - v_*|^{\gamma + 2}\theta^2b(\cos\theta)(1-\chi)d\sigma dvdv_*\le \mathcal{I}_2(|g|, \langle z \rangle^n|h|, |f|)+\mathcal{J}_2(|g|, \langle z \rangle^{n - 2}h, f),
		\end{equation*}
		 which yields that $|\mathfrak{I}_3| \lesssim |g|_{L_4^2}|\langle z \rangle^n h|_{L^2}|f|_{L^2}.$

		\noindent\underline{\it Estimate of $\mathfrak{I}_4$.} By the fact that $\langle z' \rangle^n \lesssim \langle z \rangle^n + |v' - v|^n$ and $n\ge4$, we   derive that 
		\beno  |\mathfrak{I}_4|\lesssim \int |g_*hf'|( \langle z \rangle^n+|v'-v|^n)B\chi d\sigma dvdv_*\lesssim \mathcal{P}(\langle \cdot \rangle^\mathfrak{r}|g|, \langle z \rangle^n|h|, |f|)+\int|g_*hf'||v - v_*|^{\gamma + n}\theta^4 b(\cos\theta)\chi d\sigma dvdv_*,  \eeno
        where we follow the notations  in Lemma \ref{nonsingular+}. Let us focus on the second term in the right-hand side.  Cauchy-Schwartz inequality yields that
        \beno &&\int|g_*hf'||v - v_*|^{\gamma + n}\theta^4 b(\cos\theta)\chi d\sigma dvdv_*\lesssim \bigg(\int|\langle v_* \rangle^ng_*|^2|h|\theta^2b(\cos\theta)d\sigma dvdv_*\bigg)^{1/2} 
			  \\
			  &&\times\bigg(\int|h||f'|^2\theta^5b(\cos\theta)d\sigma dvdv_*\bigg)^{1/2}\lesssim  |g|_{L_n^2}^2|h|_{L^1}|f|_{L^2}, \eeno 
        where   Lemma \ref{chv} and \eqref{kernel1} are employed. It is enough to conclude that   
		\begin{equation*}
			|\mathfrak{I}_4|\lesssim |g|_{L_4^2}|\langle z \rangle^nh|_{L^2}|f|_{L^2} + |g|_{L_n^2}|h|_{L_2^2}|f|_{L^2}.
		\end{equation*}

		Putting together  all the estimates together, we complete the proof of the lemma.
	\end{proof}

	\begin{lemma}\label{ControlofJ1} If $N\ge4$, we have 
		\begin{equation*}
			J_1 \lesssim \eta\mathfrak{D}_G(\langle z \rangle^{mn}\mathsf{D}^\alpha g) + \eta^{-1}|\!|\!|G|\!|\!|_N|\!|\!|g|\!|\!|_N^2.
		\end{equation*}
	\end{lemma}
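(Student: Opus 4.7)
The plan is to apply Lemma \ref{Comm1} pointwise in the spatial variable, with the correspondence $g \mapsto G(x,\cdot)$, $h \mapsto \mathsf{D}^\alpha g(x,\cdot)$, $f \mapsto \langle z \rangle^{mn}\mathsf{D}^\alpha g(x,\cdot)$ and the weight exponent taken as $mn$. The hypothesis $mn \ge 4$ of Lemma \ref{Comm1} follows from $m \ge 4$ and $n \ge 1$ imposed by \eqref{FSGWP}. This yields, pointwise in $x$, four contributions whose $x$-integrals must be absorbed into either $\eta\mathfrak{D}_G(\langle z \rangle^{mn}\mathsf{D}^\alpha g)$ or $\eta^{-1}|\!|\!|G|\!|\!|_N |\!|\!|g|\!|\!|_N^2$. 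The dissipation-type contribution $\eta\int D_G(\langle z \rangle^{mn}\mathsf{D}^\alpha g)\,dx$ becomes $\eta\mathfrak{D}_G(\langle z \rangle^{mn}\mathsf{D}^\alpha g)$ directly by the definition \eqref{DefDgg}.

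For the two contributions carrying the factor $|G|_{L^2_4} + |G|_{H^2}$ (the first and fourth terms coming from Lemma \ref{Comm1}), the approach is to place $G$ in $L_x^\infty$ via the Sobolev embedding $H_x^2 \hookrightarrow L_x^\infty$ and convert the resulting $\partial_x,\partial_v$-derivatives into $\mathsf{D}=(\mathsf{X},\mathsf{Y})$ through \eqref{Upperxv}. Since $m \ge 4$, the outcome is a sum of $\|\langle z \rangle^m \mathsf{D}^\gamma G\|_{L^2_x L^2}$ for low-order $|\gamma|$, which is controlled by $|\!|\!|G|\!|\!|_N$; the two remaining $L^2_v$ factors combine via Cauchy--Schwarz into $\|\langle z \rangle^{mn}\mathsf{D}^\alpha g\|_{L^2_x L^2}^2 \le |\!|\!|g|\!|\!|_N^2$, producing contributions of the required shape. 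The delicate case is the second contribution, $\int |G|_{L^2_{mn}}|\mathsf{D}^\alpha g|_{L^2_2}|\langle z \rangle^{mn}\mathsf{D}^\alpha g|_{L^2}\,dx$, because the weight $\langle v \rangle^{mn}$ on $G$ grows with $n$ and is not controlled by only two $x$-derivatives once $n$ becomes large. The remedy is a dichotomy: when $n \le N-2$, place $G$ in $L_x^\infty$, so that $\|G\|_{L_x^\infty L^2_{mn}} \lesssim \sum_{|\beta|\le 2}\|\langle z \rangle^{mn}\mathsf{D}^\beta G\|_{L^2_x L^2} \le |\!|\!|G|\!|\!|_N$, which is legitimate since $n+2 \le N$; when $n \ge 3$, place $\mathsf{D}^\alpha g$ in $L_x^\infty$ instead, so that $\|\mathsf{D}^\alpha g\|_{L_x^\infty L^2_2} \lesssim \sum_{|\beta|\le 2}\|\langle z \rangle^m \mathsf{D}^{\alpha+\beta} g\|_{L^2_x L^2} \le |\!|\!|g|\!|\!|_N$, which is legitimate since $|\alpha|+2 \le (N-n)+2 \le N-1$. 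The two regimes $\{n\le N-2\}$ and $\{n\ge 3\}$ cover every admissible $n\in\{1,\dots,N\}$ precisely when $N \ge 4$, which is exactly the standing hypothesis.

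The main obstacle is this case split and the accompanying bookkeeping of weight exponents against derivative budgets: each of the three nontrivial contributions has to be decomposed so that every intermediate factor lies inside either $|\!|\!|G|\!|\!|_N$ or $|\!|\!|g|\!|\!|_N$ at the correct weight level $n$ and derivative count $N-n$, while the dissipation term is absorbed on the left only after tuning $\eta$. If $N$ were less than $4$, there would be a value of $n$ for which neither $L_x^\infty$-placement in the second term would be admissible, which explains why the hypothesis $N \ge 4$ is sharp for this scheme.
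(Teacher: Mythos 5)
Your proposal is correct and takes essentially the same approach as the paper: apply Lemma \ref{Comm1} pointwise in $x$ with weight exponent $mn$, absorb the dissipation term into $\eta\mathfrak{D}_G$, treat the terms carrying $|G|_{L^2_4}+|G|_{H^2}$ by Sobolev embedding in $x$ using \eqref{Upperxv}, and handle the delicate $\int|G|_{L^2_{mn}}|\mathsf{D}^\alpha g|_{L^2_2}|\langle z\rangle^{mn}\mathsf{D}^\alpha g|_{L^2}dx$ term by a dichotomy on which factor is placed in $L^\infty_x$. The only cosmetic difference is your split $\{n\le N-2\}\cup\{n\ge 3\}$ versus the paper's $\{n\le N-2\}\cup\{n\ge N-1\}$; both cover all $n$ when $N\ge4$ and rest on the same weight/derivative bookkeeping.
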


	\begin{proof}
		Thanks to  Lemma \ref{Comm1}, we directly have
		\begin{equation*}
		\begin{aligned}
			|J_1| &= \big|\langle \langle z \rangle^{mn}\mathsf{Q}(G, \mathsf{D}^\alpha g) - \mathsf{Q}(G, \langle z \rangle^{mn}\mathsf{D}^\alpha g), \langle z \rangle^{mn}\mathsf{D}^\alpha g \rangle\big| \\
			&\lesssim \eta^{-1}(\|G\|_{L^\infty_x L_4^2} + \|G\|_{L^\infty_x H^2}\|\langle z \rangle^{mn}\mathsf{D}^\alpha g\|_{L^2_xL^2}^2 + \int|G|_{L_{mn}^2}|\mathsf{D}^\alpha g|_{L_2^2}|\langle z \rangle^{mn}\mathsf{D}^\alpha g|_{L^2}dx + \eta\mathfrak{D}_G(\langle z \rangle^{mn}\mathsf{D}^\alpha g) \\
			&\lesssim \eta\mathfrak{D}_G(\langle z \rangle^{mn}\mathsf{D}^\alpha g) + \eta^{-1}|\!|\!|G|\!|\!|_4|\!|\!|g|\!|\!|_N^2 + \int|G|_{L_{mn}^2}|\mathsf{D}^\alpha g|_{L_2^2}|\langle z \rangle^{mn}\mathsf{D}^\alpha g|_{L^2}dx.\\
		\end{aligned}
		\end{equation*}
		 We observe that if $n \le N - 2$,   \[\int|G|_{L_{mn}^2}|\mathsf{D}^\alpha g|_{L_2^2}|\langle z \rangle^{mn}\mathsf{D}^\alpha g|_{L^2}dx\le \|G\|_{L^\infty_x L_{mn}^2}\|\mathsf{D}^\alpha g\|_{L^2_xL_2^2}\|\langle z \rangle^{mn}\mathsf{D}^\alpha g\|_{L^2_xL^2} \lesssim |\!|\!|G|\!|\!|_N|\!|\!|g|\!|\!|_N^2.\]
		 If $n \ge N - 1$, then $|\alpha| \le 1$. Thus   if $N\ge4$
		\begin{equation*}
			\int|G|_{L_{mn}^2}|\mathsf{D}^\alpha g|_{L_2^2}|\langle z \rangle^{mn}\mathsf{D}^\alpha g|_{L^2}dx\le\|G\|_{L^2_xL_{mn}^2}\|\mathsf{D}^\alpha g\|_{L^\infty_x L_2^2}\|\langle z \rangle^{mn}\mathsf{D}^\alpha g\|_{L^2_xL^2} \lesssim |\!|\!|G|\!|\!|_N|\!|\!|g|\!|\!|_N^2.
		\end{equation*}
		This ends the proof.
	\end{proof}


	\subsection{Estimate of $J_2$ and $ J_3$.} We begin with a lemma on the commutator.
	\begin{lemma}\label{Comm2}
		For smooth functions $g= g(v), h = h(v)$ and $f = f(v)$ with $n \ge 4$, we have
		\begin{equation*}
		\begin{aligned}
			\langle \langle z \rangle^n\mathsf{Q}(g, h) - \mathsf{Q}(g, \langle z \rangle^nh), f \rangle_v &\lesssim (|g|_{L_4^2} + |g|_{H^2})|\langle z \rangle^nh|_{H_{\mathfrak{r}/2}^s}|f|_{L^2} + |g|_{L_n^2}|h|_{L_2^2}|f|_{L^2};\\
			\langle \langle z \rangle^n\mathsf{Q}(g, h) - \mathsf{Q}(g, \langle z \rangle^nh), f \rangle_v &\lesssim |g|_{L_4^2}(|\langle z \rangle^nh|_{H_{\mathfrak{r}/2}^s} + |\langle z \rangle^nh|_{H^1})|f|_{L^2} + |g|_{L_n^2}|h|_{L_2^2}|f|_{L^2}.\\
		\end{aligned}
		\end{equation*}
	\end{lemma}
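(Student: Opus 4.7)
The lemma is a sharpening of Lemma \ref{Comm1}: only the treatment of the piece $\mathfrak{I}_2$ in that proof needs to change, since the bounds already obtained for $\mathfrak{I}_1,\mathfrak{I}_3,\mathfrak{I}_4$ are of the form $(|g|_{L_4^2}+|g|_{H^2})|\mathsf{W}_n h|_{L^2}|f|_{L^2}+|g|_{L_n^2}|h|_{L_2^2}|f|_{L^2}$, which is majorised by both target right-hand sides via $|\mathsf{W}_n h|_{L^2}\le |\mathsf{W}_n h|_{H^s_{\mathfrak{r}/2}}$ and $|\mathsf{W}_n h|_{L^2}\le |\mathsf{W}_n h|_{H^1}$. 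So my plan is to reuse the identity $\mathfrak{I}=\int g_* h f'(\mathsf{W}_n'-\mathsf{W}_n)B\,d\sigma dv dv_*$, the geometric cutoff $\chi = 1_{|v-v_*|\le 4|v_*|}1_{|v'-v|>1}$ and the second-order Taylor expansion of $\mathsf{W}_n'-\mathsf{W}_n$ to produce exactly the same decomposition $\mathfrak{I}=\mathfrak{I}_1+\mathfrak{I}_2+\mathfrak{I}_3+\mathfrak{I}_4$, and only re-estimate
\[
\mathfrak{I}_2=\int g_* h (f'-f)(v'-v)\cdot\nabla\mathsf{W}_n(v)B(1-\chi)\,d\sigma dv dv_*
\]
without invoking the dissipation $D_g(f)$, transferring all the regularity onto the $\mathsf{W}_n h$-factor.

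For the second, easier estimate, I plan to integrate by parts once in $v$. Writing $(v'-v)\cdot\nabla\mathsf{W}_n(v)=\nabla_v\cdot[(v'-v)\mathsf{W}_n(v)]+\mathsf{W}_n(v)\,\nabla_v\!\cdot\!(v-v')$ and integrating the $\nabla_v$ against $g_* h(f'-f)B(1-\chi)$, the derivative distributes in three ways: when it falls on $h$, a Cauchy--Schwartz against $B\theta^2$ (converging because $|v'-v|\sim\theta|v-v_*|$, exactly as in the estimate of $\mathbb{I}_2^{2,1,1}$ of Lemma \ref{partial} via the change $v\to\xi$ and \eqref{xivchange}) yields the required $|\mathsf{W}_n h|_{H^1}|f|_{L^2}$ contribution; when it falls on $(f'-f)$ the resulting integrand has a factor $|v'-v|^2$ which, combined with the $B$-kernel, is of exactly the type of the Hessian piece $\mathfrak{I}_3$ already controlled in Lemma \ref{Comm1}; when it falls on the cutoff or on $B$ the singular-region analysis of Lemma \ref{singular} produces a $|g|_{L_n^2}|h|_{L_2^2}|f|_{L^2}$ contribution, using the same pointwise truncation $\langle v'\rangle\lesssim\langle v\rangle+\langle v-v_*\rangle$ trick as in the estimates of $\mathbb{I}_2^{2,2}$ and $\mathbb{I}_2^{2,3}$ there.

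For the first estimate, which requires the sharp anisotropic norm $|\mathsf{W}_n h|_{H^s_{\mathfrak{r}/2}}$, I plan to use a weighted Cauchy--Schwartz that places the anisotropic Hilbert--Sobolev seminorm on the $h$-side. Splitting the kernel $B^{1/2}\cdot B^{1/2}$ and estimating $|(v'-v)\cdot \nabla\mathsf{W}_n(v)|\lesssim \sin(\theta/2)|v-v_*|\langle z\rangle^{n-1}$, I would pair the factor $b(\cos\theta)^{1/2}(f'-f)$ in $L^2(g_*\,d\sigma dv dv_*)$ against $b(\cos\theta)^{1/2}(\mathsf{W}_n h)|v-v_*|^{\gamma/2+s}$: on the $f$-side the cutoff $(1-\chi)$ and the splitting into the non-singular region $\{|v-v_*|>4|v_*|\}$ (handled by Lemma \ref{nonsingular}) and the singular region $\{|v'-v|\le 1\}$ (handled by a Taylor expansion in $f$ as in Lemma \ref{partial} but performed in such a way that only $|f|_{L^2}$ survives, the rerouting being done by the commutator identity $\langle v'\rangle^{-\mathfrak{r}/2}[\langle v\rangle^{\mathfrak{r}/2}f]'-\langle v\rangle^{-\mathfrak{r}/2}[\langle v\rangle^{\mathfrak{r}/2}f]=(f'-f)+(\langle v'\rangle^{-\mathfrak{r}/2}-\langle v\rangle^{-\mathfrak{r}/2})[\langle v\rangle^{\mathfrak{r}/2}f]'$); on the $h$-side the resulting integral is exactly the Bobylev-type non-cutoff representation of $|\mathsf{W}_n h|_{H^s_{\mathfrak{r}/2}}^2$, up to a lower-order remainder that is absorbed into $|g|_{L_n^2}|h|_{L_2^2}|f|_{L^2}$ by the same truncation argument as in Lemma \ref{Comm1}.

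The main obstacle is this last rerouting of the anisotropic weight $\langle v\rangle^{\mathfrak{r}/2}$ onto $\mathsf{W}_n h$: because $\mathfrak{r}=\max\{\gamma+2s,0\}$ encodes precisely the loss-of-weight of the non-cutoff collision operator, the factor appears naturally on the $f$-side through the upper bound of Corollary \ref{coerupper}, and moving it to the $h$-side requires using the geometry $|v'-v|\sim\theta|v-v_*|$ together with the sharp form of the kernel $|v-v_*|^\gamma b(\cos\theta)$ to avoid producing either an isotropic $H^s_1$-loss on $h$ or a stray $|f|_{H^s_{\mathfrak{r}/2}}$-loss on $f$; the analysis of the cutoff region $\{|v-v_*|>4|v_*|\}\cap\{|v'-v|>1\}$ is the point where this balancing is most delicate.
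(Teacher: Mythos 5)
Your starting point is different from the paper's, and the difference is not cosmetic: the re-estimation of the term $\mathfrak{I}_2=\int g_*h(f'-f)(v'-v)\cdot\nabla\mathsf{W}_n(v)B(1-\chi)$ that you propose does not go through. In that term the regularity difference sits on $f$ via $f'-f$, and the factor $(v'-v)\cdot\nabla\mathsf{W}_n(v)$ supplies only one power of $|v'-v|\sim\theta|v-v_*|$; to integrate the kernel $\theta\,b(\cos\theta)$ against $(f'-f)$ without creating an $H^s$- or $D_g(f)$-quantity on $f$ you would need a second power of $\theta$, which is not there. Your first plan (weighted Cauchy--Schwartz pairing $b^{1/2}(f'-f)$ against $b^{1/2}\mathsf{W}_n h|v-v_*|^{\gamma/2+s}$) fails immediately: the $h$-side factor $\int g_*|\mathsf{W}_n h|^2|v-v_*|^{\gamma+2s}b(\cos\theta)d\sigma\,dv\,dv_*$ has $\int b\,d\sigma=+\infty$, and the $f$-side factor $\int g_*(f'-f)^2 b$ is a dissipation-type quantity, not $|f|_{L^2}^2$. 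Your second plan (integration by parts in $v$) is also problematic: $\nabla_v(f'-f)=\frac12(\nabla f)'+\frac12\hat{u}(\sigma\cdot(\nabla f)')-\nabla f$ carries no factor of $|v'-v|$, let alone the $|v'-v|^2$ you claim, so the ``falls on $f'-f$'' piece is not a Hessian-type term; and $\nabla_v$ hitting the indicator $\chi$ produces a surface measure that your outline does not address.

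The paper avoids all of this by not generating $\mathfrak{I}_2$ at all: it Taylor-expands $\langle z'\rangle^n-\langle z\rangle^n$ around $v'$ (not $v$), with the cutoffs $\chi$ and $1-\chi$ swapped relative to Lemma~\ref{Comm1}. Writing $h=h'+(h-h')$ in the first-order piece produces two terms. The one with $h'f'\nabla\mathsf{W}_n(v')$ is a pure function of $v'$ (since $g_*B\chi$ depends only on $v_*$, $|v-v_*|$, $\theta$), hence $\mathfrak{J}_1=0$ by the regular change of variables in Lemma~\ref{lemsymetric}. The surviving term $\mathfrak{J}_2$ carries the difference $(h-h')$ on $h$, together with a second factor $|v'-v|$ from $(v'-v)\cdot\nabla\mathsf{W}_n(v')$, which is exactly enough to integrate the angular singularity; and the algebraic identity $\langle z'\rangle^n(h-h')=\langle z\rangle^n h-\langle z'\rangle^n h'+(\langle z'\rangle^n-\langle z\rangle^n)h$, valid on $\chi$ where $\langle z'\rangle\sim\langle z\rangle$, routes that difference cleanly onto $\langle z\rangle^n h$ so that $D_{|g|}(\langle z\rangle^n h)$ (not $D_g(f)$) appears. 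This $v'$-centered expansion plus the vanishing of $\mathfrak{J}_1$ is the key idea your proposal is missing.
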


	\begin{proof} Let $\mathsf{W}_n(v):= \langle z \rangle^n, \chi=\chi(v,v_*,\sigma):=1_{|v - v_*| \le 4|v_*|}1_{|v' - v| > 1}$. By the Taylor's expansion
		\begin{equation*}
			\langle z' \rangle^n - \langle z \rangle^n = (v' - v) \cdot \nabla \mathsf{W}_n(v') - \frac{1}{2}(v' - v)^\tau\int_0^1\nabla^2\mathsf{W}_n(\xi)d\kappa (v' - v),
		\end{equation*}
		where $\xi = v' + \kappa(v - v')$, we have the following decomposition:
		\begin{equation*}
			\mathfrak{J}:=\langle \langle z \rangle^n\mathsf{Q}(g, h) - \mathsf{Q}(g, \langle z \rangle^nh), f \rangle_v = \sum_{i=1}^4\mathfrak{J}_i,
		\end{equation*}
		 where
		\begin{equation*}
		\begin{aligned}
			\mathfrak{J}_1 &= \int g_*h'f'(v' - v) \cdot \nabla \mathsf{W}_n(v')B\chi d\sigma dvdv_*;
			\mathfrak{J}_2  = \int g_*(h - h')f'(v' - v) \cdot \nabla \mathsf{W}_n(v')B\chi d\sigma dvdv_*;\\
			\mathfrak{J}_3&= \int g_*hf'(v' - v)^\tau\nabla^2\mathsf{W}_n(\xi) (v' - v)B\chi d\kappa d\sigma dvdv_*; 
			\mathfrak{J}_4  = \int g_*hf'(\langle z' \rangle^n - \langle z \rangle^n)B(1-\chi)d\sigma dvdv_*.\\
		\end{aligned}
		\end{equation*}

		\noindent \noindent\underline{\it Estimate of $\mathfrak{J}_1, \mathfrak{J}_2$ and $\mathfrak{J}_4$.} We first remark that by Lemma \ref{lemsymetric}, $\mathfrak{J}_1=0$. The   estimates for $\mathfrak{J}_2$ and $\mathfrak{J}_4$ will follow by almost  the same way as those for $\mathfrak{I}_2$ and $\mathfrak{I}_4$ in Lemma \ref{Comm1}. Thus we conclude that
		\begin{equation*}
			|\mathfrak{J}_2|+|\mathfrak{J}_4| \lesssim |g|_{L_4^2}|\langle z \rangle^nh|_{L^2}|f|_{L^2} + |g|_{L_n^2}|h|_{L_2^2}|f|_{L^2}.
		\end{equation*}

	   \noindent \noindent\underline{\it Estimate of $\mathfrak{J}_2$.} Using the facts that $\langle z' \rangle^n(h - h') = \langle z \rangle^nh - \langle z' \rangle^nh' + (\langle z' \rangle^n - \langle z \rangle^n)h$, and $\langle v' \rangle \sim \langle v \rangle, \langle z' \rangle \sim \langle z \rangle$ on the support of $\chi$, we further decompose $\mathfrak{J}_2$ into two parts, i.e., $|\mathfrak{J}_2|\lesssim \mathfrak{J}_2^1+\mathfrak{J}_2^2$, where
		\beno
			\mathfrak{J}_2^1:= \int|g_*||\langle z \rangle^nh - \langle z' \rangle^nh'||\langle z' \rangle^{-1}f'||v' - v|B\chi d\sigma dvdv_*; \eeno 
			\beno \mathfrak{J}_2^2:= \int|g_*hf'||\langle z \rangle^n - \langle z' \rangle^n|\langle z' \rangle^{-1}|v' - v|B\chi d\sigma dvdv_*.\eeno

		By Cauchy-Schwartz inequality, we first get that
		\begin{equation*}
		\begin{aligned}
			\mathfrak{J}_2^1 &\lesssim \big(\int|g_*||\langle z \rangle^nh - \langle z' \rangle^nh'|^2Bd\sigma dvdv_*\big)^{1/2}   \big(\int|g_*||\langle z' \rangle^{-1}f'|^2|v' - v|^2B\chi d\sigma dvdv_*\big)^{1/2}:=(\mathfrak{J}_2^{1,1})^{\f12}(\mathfrak{J}_2^{1,2})^{\f12}.  
		\end{aligned}
		\end{equation*}
		Thanks to Corollary \ref{coerupper} and Lemma \ref{coerupper+}, we have
		\begin{equation*}
		\begin{aligned}
			\mathfrak{J}_2^{1,1}= D_{|g|}(\langle z \rangle^nh) \lesssim (|g|_{L_4^2} + |g|_{H^2})|\langle z \rangle^nh|_{H_{\mathfrak{r}/2}^s}^2; \quad
			\mathfrak{J}_2^{1,1}   \lesssim |g|_{L_4^2}(|\langle z \rangle^nh|_{H_{\mathfrak{r}/2}^s}^2 + |\langle z \rangle^nh|_{H^1}^2).\\
		\end{aligned}
		\end{equation*}
		Since $1-\chi \le 1_{|v - v_*| > 4|v_*|} + 1_{|v' - v| \le 1}$, following the notation  in  Lemma  \ref{singular},  we derive that 
		\begin{equation*}
		\begin{aligned}
			\mathfrak{J}_2^{1,2} &\lesssim \int|g_*||f'|^2\langle z \rangle^{-2}\langle z \rangle^{\gamma + 2}\theta^2b(\cos\theta)d\sigma dvdv_* +  \mathcal{J}_3(|g|, \langle z \rangle^{-1}f, \langle z \rangle^{-1}f)  \lesssim    |g|_{L_\mathfrak{r}^1}|f|_{L^2}^2.\\
		\end{aligned}
		\end{equation*}
		 We conclude that 
		\begin{equation*}
			\mathfrak{J}_2^1 \lesssim (|g|_{L_4^2} + |g|_{H^2})|\langle z \rangle^nh|_{H_{\mathfrak{r}/2}^s}|f|_{L^2},\quad \mathfrak{J}_2^1\lesssim |g|_{L_4^2}(|\langle z \rangle^nh|_{H_{\mathfrak{r}/2}^s} + |\langle z \rangle^nh|_{H^1})|f|_{L^2}.
		\end{equation*}
		
		To estimate $\mathfrak{J}_2^2$, we note that $|\langle z \rangle^n - \langle z' \rangle^n|\chi \lesssim \langle z \rangle^{n - 1}|v' - v|\chi$ since $\langle z' \rangle \chi \sim \langle z \rangle\chi$. Then 
		\begin{equation*}
			\mathfrak{J}_2^2 \lesssim \int|g_*\langle z \rangle^{n - 2}hf'|\theta^2|v - v_*|^{\gamma + 2}b(\cos\theta)\chi d\sigma dvdv_* \le \mathcal{I}_2(|g|, \langle z \rangle^n|h|, |f|) +\mathcal{J}_2(|g|, \langle z \rangle^{n - 2}|h|, |f|),
		\end{equation*} where we use the fact that
		 $1-\chi \le 1_{|v - v_*| > 4|v_*|} + 1_{|v' - v| \le 1}$ and follow the notations in Lemma \ref{kernel} and Lemma \ref{singular}. Therefore, we are led to that  $\mathfrak{J}_2^2\lesssim |g|_{L_4^2}|\langle z \rangle^nh|_{L^2}|f|_{L^2}.$
	 
		 Put together all the estimates, and we complete the proof.
	\end{proof}

	\begin{lemma}\label{ControlofJ2} For $N\ge5$, we have
		\begin{equation*}
			J_2 \lesssim |\!|\!|G|\!|\!|_N|\!|\!|g|\!|\!|_N^2.
		\end{equation*}
	\end{lemma}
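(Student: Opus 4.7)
My plan is to reduce the sum defining $J_2$ to a family of pointwise (in $x$) commutator bounds provided by Lemma \ref{Comm2}, integrate in $x$ by H\"older's inequality, and then convert every mixed norm that appears to the single energy norm $|\!|\!|\cdot|\!|\!|_N$. Concretely, for each $(\alpha_1,\alpha_2)$ with $\alpha_1+\alpha_2=\alpha$ and $|\alpha_1|\ge 1$, I apply Lemma \ref{Comm2} pointwise in $x$ with the substitution $g\mapsto \mathsf{D}^{\alpha_1}G$, $h\mapsto \mathsf{D}^{\alpha_2}g$, $f\mapsto \langle z\rangle^{mn}\mathsf{D}^\alpha g$, and weight exponent $mn$. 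This bounds the $v$-integrand of each summand by a sum of a leading term of the form $|\mathsf{D}^{\alpha_1}G|_X\,|\langle z\rangle^{mn}\mathsf{D}^{\alpha_2}g|_Y\,|\langle z\rangle^{mn}\mathsf{D}^\alpha g|_{L^2}$, where $(X,Y)$ is either $(L^2_4+H^2,H^s_{\mathfrak r/2})$ (first variant) or $(L^2_4,H^s_{\mathfrak r/2}+H^1)$ (second variant), together with a subleading term $|\mathsf{D}^{\alpha_1}G|_{L^2_{mn}}\,|\mathsf{D}^{\alpha_2}g|_{L^2_2}\,|\langle z\rangle^{mn}\mathsf{D}^\alpha g|_{L^2}$.

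After integrating in $x$ via H\"older, I place one of the two velocity-dependent factors in $L^\infty_x$ using the Sobolev embedding $H^2_x\hookrightarrow L^\infty_x$, which costs two $\mathsf{D}$-derivatives. I then translate every resulting norm into the energy norm via three tools: the identity \eqref{Upperxv} that controls $|\partial_x|+|\partial_v|$ by $|\mathsf{D}|$ (with the uniform constant $|t|/\langle t\rangle\le 1$); the pointwise estimate $|\mathsf{D}^\beta\langle z\rangle^{mn}|\lesssim \langle z\rangle^{mn}$ that lets $\mathsf{D}$-derivatives commute through the polynomial weight up to a harmless constant; and the interpolation \eqref{TransDW1} that upgrades $H^s_{\mathfrak r/2}\hookrightarrow H^1+L^2_m$, converting the anisotropic target norm into the isotropic ingredients appearing in $|\!|\!|\cdot|\!|\!|_N$.

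The case analysis splits on the size of $|\alpha_1|$. When $|\alpha_1|\le N-3$, I use the second variant of Lemma \ref{Comm2} and place $L^\infty_x$ on $\mathsf{D}^{\alpha_1}G$, obtaining the leading bound $|\!|\!|G|\!|\!|_{|\alpha_1|+3}\,|\!|\!|g|\!|\!|_{n+|\alpha_2|+1}\,|\!|\!|g|\!|\!|_N$; both $|\alpha_1|+3\le N$ and (since $|\alpha_1|\ge 1$) $n+|\alpha_2|+1\le N$ hold. When $|\alpha_1|\ge N-2$, the constraint $n+|\alpha_1|+|\alpha_2|\le N$ forces $n+|\alpha_2|\le 2$, so placing $L^\infty_x$ on $\langle z\rangle^{mn}\mathsf{D}^{\alpha_2}g$ (two Sobolev derivatives in $x$ plus one for $H^1_v$) yields $|\!|\!|G|\!|\!|_{|\alpha_1|+1}\,|\!|\!|g|\!|\!|_{n+|\alpha_2|+3}\,|\!|\!|g|\!|\!|_N$, and $n+|\alpha_2|+3\le 5\le N$. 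The subleading term is treated in parallel: placing $L^\infty_x$ on $\mathsf{D}^{\alpha_1}G$ when $|\alpha_2|\ge 2$ gives $|\!|\!|G|\!|\!|_{n+|\alpha_1|+2}\,|\!|\!|g|\!|\!|_{|\alpha_2|+1}\,|\!|\!|g|\!|\!|_N$, while placing $L^\infty_x$ on $\mathsf{D}^{\alpha_2}g$ when $|\alpha_2|\le 2$ gives $|\!|\!|G|\!|\!|_{n+|\alpha_1|}\,|\!|\!|g|\!|\!|_{|\alpha_2|+3}\,|\!|\!|g|\!|\!|_N$, both admissible for $N\ge 5$.

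\textbf{Main obstacle.} The delicate point is the tight bookkeeping: the energy norm $|\!|\!|\cdot|\!|\!|_N$ couples the weight slot $n$ with the derivative count $|\alpha|$ through the single constraint $n+|\alpha|\le N$, the Sobolev embedding costs two derivatives, and the interpolation \eqref{TransDW1} costs one more. One must therefore select the correct variant of Lemma \ref{Comm2} and decide which factor carries the $L^\infty_x$ placement in each sub-case so that neither the weight index nor the derivative count exceeds $N$; this combinatorial balance is what pins the threshold at $N\ge 5$.
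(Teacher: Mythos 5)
Your proposal is correct and follows essentially the same route as the paper: apply the second estimate of Lemma \ref{Comm2} summand by summand with $n\mapsto mn$, integrate in $x$, place $L^\infty_x$ (two $\mathsf{D}$-derivatives via Sobolev) on whichever factor has room under the constraint $n+|\alpha_1|+|\alpha_2|\le N$, and convert every norm into $|\!|\!|\cdot|\!|\!|_N$ via \eqref{Upperxv}, $|\mathsf{D}^\beta\langle z\rangle^{mn}|\lesssim\langle z\rangle^{mn}$, and \eqref{TransDW1}; the case split on $|\alpha_1|\le N-3$ versus $|\alpha_1|\ge N-2$ for the leading term matches the paper exactly. Your explicit treatment of the subleading $|\mathsf{D}^{\alpha_1}G|_{L^2_{mn}}|\mathsf{D}^{\alpha_2}g|_{L^2_2}|\langle z\rangle^{mn}\mathsf{D}^\alpha g|_{L^2}$ term, splitting on $|\alpha_2|$ rather than $|\alpha_1|$, is a correct and useful elaboration of the paper's terse ``similar argument'' (a literal reuse of the $|\alpha_1|$-split would fail there when $|\alpha_2|\in\{0,1\}$).
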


	\begin{proof} Let $J_2=\sum\limits_{\substack{\alpha_1 + \alpha_2 = \alpha\\ |\alpha_1| > 0}} J_{2,\alpha_1,\alpha_2}$ with $J_{2,\alpha_1,\alpha_2}:=\langle \langle z \rangle^{mn}\mathsf{Q}(\mathsf{D}^{\alpha_1}G, \mathsf{D}^{\alpha_2}g) - \mathsf{Q}(\mathsf{D}^{\alpha_1}G, \langle z \rangle^{mn}\mathsf{D}^{\alpha_2}g), \langle z \rangle^{mn}\mathsf{D}^\alpha g \rangle.$
		Thanks to the second estimate in Lemma \ref{Comm2}, if $N\ge5$ and $\mathfrak{r}=\max\{\gamma+2s,0\}$, we claim that 
		\beno
			 |J_{2,\alpha_1,\alpha_2}|&\lesssim& \int|\mathsf{D}^{\alpha_1}G|_{L_4^2}|\langle z \rangle^{mn}\mathsf{D}^{\alpha_2}g|_{H_{\mathfrak{r}/2}^s \cap H^1}|\langle z \rangle^{mn}\mathsf{D}^\alpha g|_{L^2}dx + \int|\mathsf{D}^{\alpha_1}G|_{L_{mn}^2}|\mathsf{D}^{\alpha_2}g|_{L_2^2}|\langle z \rangle^{mn}\mathsf{D}^\alpha g|_{L^2}dx.\\  &\lesssim& |\!|\!|G|\!|\!|_N|\!|\!|g|\!|\!|_N^2.
		\eeno

		To prove it,  for the first term, if $|\alpha_1| \le N - 3$, by \eqref{TransDW1}, it is bounded by
			$$\|\mathsf{D}^{\alpha_1}G\|_{L^\infty_x L_4^2}\|\langle z \rangle^{mn}\mathsf{D}^{\alpha_2}g\|_{L^2_x(H_{\mathfrak{r}/2}^s \cap H^1)}\|\langle z \rangle^{mn}\mathsf{D}^\alpha g\|_{L^2_xL^2} \lesssim |\!|\!|G|\!|\!|_N|\!|\!|g|\!|\!|_N^2.$$
		And if $|\alpha_1| \ge N - 2$, by \eqref{TransDW1}, it is bounded by
		\begin{equation*}
			\|\mathsf{D}^{\alpha_1}G\|_{L^2_xL_4^2}\|\langle z \rangle^{mn}\mathsf{D}^{\alpha_2}g\|_{L^\infty_x(H_{\mathfrak{r}/2}^s \cap H^1)}\|\langle z \rangle^{mn}\mathsf{D}^\alpha g\|_{L^2_xL^2} \lesssim |\!|\!|G|\!|\!|_N|\!|\!|g|\!|\!|_N^2.
		\end{equation*}
		 The similar argument can be applied to the second term. Thus we end the proof.
	\end{proof}

	\begin{lemma}\label{ControlofJ3} 
		We have  $J_3 \lesssim |\!|\!|g|\!|\!|_N^2.$
	\end{lemma}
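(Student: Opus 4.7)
The plan is to estimate $J_3$ by applying Lemma \ref{Comm2} pointwise in $x$ to each summand, exploiting that $\mathbf{M}=(2\pi)^{-3}e^{-(|x|^2+|v|^2)/2}$ is a Gaussian in both variables and hence behaves like a Schwartz function in every relevant norm.

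First I would note that, since the vector fields $\mathsf{X},\mathsf{Y}$ in \eqref{DefiXY} are linear combinations of $\partial_x$ and $\partial_v$ with coefficients $1/\langle t\rangle$ and $t/\langle t\rangle$ bounded by $1$, each derivative $\mathsf{D}^{\alpha_2}\mathbf{M}$ is a polynomial in $(x,v)$ (with $t$-uniform bounds) times $\mathbf{M}$ itself. Consequently $\langle z\rangle^{mn}\mathsf{D}^{\alpha_2}\mathbf{M}$ remains of the form polynomial $\times$ Gaussian, so for any $\alpha_2, n$ the quantities
\[
|\langle z\rangle^{mn}\mathsf{D}^{\alpha_2}\mathbf{M}(x,\cdot)|_{H^s_{\mathfrak{r}/2}}, \qquad |\mathsf{D}^{\alpha_2}\mathbf{M}(x,\cdot)|_{L^2_2}
\]
decay faster than any polynomial in $x$ and are bounded in $L^\infty_x$ uniformly in $t$.

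Next I would apply the first inequality of Lemma \ref{Comm2} pointwise in $x$ with $g\mapsto \mathsf{D}^{\alpha_1}g(x,\cdot)$, $h\mapsto \mathsf{D}^{\alpha_2}\mathbf{M}(x,\cdot)$, and $f\mapsto \langle z\rangle^{mn}\mathsf{D}^\alpha g(x,\cdot)$, and integrate in $x$. Placing the $\mathbf{M}$-dependent factor in $L^\infty_x$ and using Cauchy--Schwarz on the remaining two slots yields
\[
|J_3|\lesssim \sum_{\alpha_1+\alpha_2=\alpha}\Bigl(\|\mathsf{D}^{\alpha_1}g\|_{L^2_x(L^2_4\cap H^2)}+\|\mathsf{D}^{\alpha_1}g\|_{L^2_xL^2_{mn}}\Bigr)\|\langle z\rangle^{mn}\mathsf{D}^\alpha g\|_{L^2_xL^2}.
\]

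Finally, using \eqref{Upperxv} to convert $v$-derivatives of $g$ into $\mathsf{D}$-derivatives (with bounded coefficients), the $H^2_v$ norm of $\mathsf{D}^{\alpha_1}g$ is controlled by at most two additional $\mathsf{D}$-derivatives, while the weight $L^2_{mn}$ matches the $n$-th weight class appearing in $|\!|\!|\cdot|\!|\!|_N^2$. Since $n+|\alpha_1|\le n+|\alpha|\le N$, each factor is absorbed into the energy norm, giving $J_3\lesssim |\!|\!|g|\!|\!|_N^2$. The main obstacle, such as it is, is only the index bookkeeping to check that the two extra $v$-derivatives on $\mathsf{D}^{\alpha_1}g$ together with the weight $\langle z\rangle^{mn}$ still fit under the envelope $n+|\alpha|\le N$; all the analytic work has already been done in Lemma \ref{Comm2} and in the Schwartz decay of $\mathbf{M}$.
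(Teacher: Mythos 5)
Your strategy of putting all the $\mathbf M$-dependent factors in $L^\infty_x$ is exactly the one the paper uses, but you have selected the wrong branch of Lemma~\ref{Comm2}, and this makes the bookkeeping fail. Applying the \emph{first} inequality of Lemma~\ref{Comm2} with $g\mapsto\mathsf D^{\alpha_1}g$ places the factor $|\mathsf D^{\alpha_1}g|_{H^2}$ on the perturbation. After converting the two $v$-derivatives into $\mathsf D$-derivatives via \eqref{Upperxv}, controlling this by the energy norm requires
\[
\|\langle z\rangle^{m}\mathsf D^{\alpha_1+\beta}g\|_{L^2_xL^2}\lesssim|\!|\!|g|\!|\!|_N,\qquad |\beta|\le 2,
\]
which forces $1+|\alpha_1|+2\le N$, i.e.\ $|\alpha_1|\le N-3$. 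But in the sum defining $J_3$ there is no restriction on $|\alpha_1|$: when $n=1$ and $\alpha_2=0$ one has $|\alpha_1|=|\alpha|=N-1$, so the two extra $v$-derivatives push the total count to $N+2$, well outside the envelope. Your closing remark ``Since $n+|\alpha_1|\le n+|\alpha|\le N$, each factor is absorbed'' checks only the weight index and the raw derivative count of $\alpha_1$, and misses the two additional derivatives coming from the $H^2_v$ norm.

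The fix — and what the paper actually does — is to use the \emph{second} inequality of Lemma~\ref{Comm2}, which trades regularity from the $g$-slot to the $h$-slot: the factor on $\mathsf D^{\alpha_1}g$ is then only $|\mathsf D^{\alpha_1}g|_{L^2_4}$ and $|\mathsf D^{\alpha_1}g|_{L^2_{mn}}$, while the $H^s_{\mathfrak r/2}$ and $H^1$ norms (which the paper packages as $H^1_1$) land on $\langle z\rangle^{mn}\mathsf D^{\alpha_2}\mathbf M$, where they are uniformly bounded in $L^\infty_x$ because $\mathbf M$ is a Gaussian. With that choice, $\|\mathsf D^{\alpha_1}g\|_{L^2_xL^2_{mn}}$ and $\|\mathsf D^{\alpha_1}g\|_{L^2_xL^2_4}$ are both controlled by $|\!|\!|g|\!|\!|_N$ precisely because $n+|\alpha_1|\le n+|\alpha|\le N$ (and $m\ge 4$), so the bookkeeping closes without needing any additional derivatives on $g$. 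The rest of your argument — the observation that $\mathsf D^{\alpha_2}\mathbf M$ is (polynomial)$\times$(Gaussian) and hence Schwartz uniformly in $t$, and the $L^\infty_x \times L^2_x \times L^2_x$ Hölder split — is correct and matches the paper.
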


	\begin{proof} Let $J_3=\sum\limits_{\alpha_1 + \alpha_2 = \alpha} J_{3,\alpha_1,\alpha_2}$ with  $J_{3,\alpha_1,\alpha_2}:=\langle \langle z \rangle^{mn}\mathsf{Q}(\mathsf{D}^{\alpha_1}g, \mathsf{D}^{\alpha_2}\mathbf{M}) - \mathsf{Q}(\mathsf{D}^{\alpha_1}g, \langle z \rangle^{mn}\mathsf{D}^{\alpha_2}\mathbf{M}), \langle z \rangle^{mn}\mathsf{D}^\alpha g \rangle$.
		By Lemma \ref{Comm2}, we get that 
	\beno
			|J_{3,\alpha_1,\alpha_2}|  
			\lesssim \|\mathsf{D}^{\alpha_1}g\|_{L^2_xL_4^2}\|\langle z \rangle^{mn}\mathsf{D}^{\alpha_2}\mathbf{M}\|_{L^\infty_x H_1^1}\|\langle z \rangle^{mn}\mathsf{D}^\alpha g\|_{L^2_xL^2} \\
			+ \|\mathsf{D}^{\alpha_1}g\|_{L^2_xL_{mn}^2}\|\langle z \rangle^{mn}\mathsf{D}^{\alpha_2}\mathbf{M}\|_{L^\infty_x L_2^2}\|\langle z \rangle^{mn}\mathsf{D}^\alpha g\|_{L^2_xL^2} \lesssim |\!|\!|g|\!|\!|_N^2.
	\eeno 
		 This ends the proof.
	\end{proof}


	\subsection{Proof of Theorem \ref{GWPPRSSBE}: global well-posedness and scattering theory} Now we are in a position to give a detailed proof to result $(i)$ in Theorem \ref{GWPPRSSBE}.
	\smallskip

	\begin{proof}[Proof of Theorem \ref{GWPPRSSBE}: global well-posedness and scattering theory] We divide the proof into several steps.

	\underline{\it Step 1: A priori estimate.}   Apply  Lemma \ref{ControlofI1}, Lemma \ref{ControlofI2}, Lemma \ref{ControlofI34}, Lemma \ref{ControlofJ1}, Lemma \ref{ControlofJ2} and Lemma \ref{ControlofJ3} to   \eqref{EnergyG}, then we conclude that
	\beno 
\frac{1}{2}\frac{d}{dt}|\!|\!|g|\!|\!|_6^2 + \langle t \rangle^{-3 - \gamma}\sum_{n = 1}^6\sum_{|\alpha| \le 6 - n}\mathfrak{D}_G(\langle z \rangle^{mn}\mathsf{D}^\alpha g)\lesssim    \langle t \rangle^{-3 - \gamma}(|\!|\!|G|\!|\!|_6 + 1)|\!|\!|g|\!|\!|_6^2.\eeno
Using the fact that  $|\!|\!|G|\!|\!|_6 \le |\!|\!|g|\!|\!|_6 + |\!|\!|\mathbf{M}|\!|\!|_6 \lesssim |\!|\!|g|\!|\!|_6 + 1$, we further derive that 
	\beno \frac{1}{2}\frac{d}{dt}|\!|\!|g|\!|\!|_6^2 \lesssim \langle t \rangle^{-3 - \gamma}(|\!|\!|g|\!|\!|_6 + 1)|\!|\!|g|\!|\!|_6^2, \eeno
	which is enough to get that 
	\beno \sup_{t\in[0,\infty[} |\!|\!|g(t)|\!|\!|_6^2\lesssim |\!|\!|g_0|\!|\!|_6^2\lesssim \epsilon_0. \eeno  

	\underline{\it Step 2: Uniqueness.} Let  $G,H$ be two non-negative solutions to \eqref{DefiSSBES} with the same initial data.  Then $h:=H-G$ verifies that 
	\[\partial_th + \langle t \rangle^{-2}\mathsf{T}h = \langle t \rangle^{-3 - \gamma}[\mathsf{Q}(H, h) + \mathsf{Q}(h, G)].\]
	By Lemma \ref{ControlofI1}, Lemma \ref{ControlofJ1}, Lemma \ref{ControlofI34} and Lemma \ref{ControlofJ3}(with $n=N=1$), it is not difficult to get that 
	\beno \frac{1}{2}\frac{d}{dt}\|\langle z \rangle^m h\|_{L^2_xL^2}^2 + \langle t \rangle^{-3 - \gamma}\mathfrak{D}_{G + h}(\langle z \rangle^m h) \lesssim  \langle t \rangle^{-3 - \gamma}(|\!|\!|H|\!|\!|_6 + |\!|\!|G|\!|\!|_6)\|\langle z \rangle^mh\|_{L^2_xL^2}^2.\eeno  From this together with \eqref{EstGWP} for $G$ and $H$, we 
	 immediately deduce the uniqueness thanks to the Gronwall inequality.

	 \underline{\it Step 3: Non-negativity and local existence.} We first address that the non-cutoff Boltzmann collision operator $\mathsf{Q}$ can be defined via cutoff approximation(see \cite{HJZ}). As a matter of fact, if 
	 \beno  \mathsf{Q}_\eta(g,h):=\int B(g_*'h'-g_*h)\mathrm{1}_{\sin(\theta/2)\ge\eta}d\sigma dv_*,  \eeno
	 then for any $g\in L^1_{\gamma+2}, h\in L^2_{\gamma+2}$, the following holds in the sense of tempered distributions:
	 \ben\label{Qap}\mathsf{Q}(g,h)=\lim_{\eta\rightarrow0} \mathsf{Q}_\eta(g,h).\een
	 
	  Motivated by \eqref{Qap}, we introduce the following approximating equation to \eqref{DefiSSBES}, i.e.,
      \ben\label{APSSBE1}  \left \{
\begin{array}{lr}
\eta\in]0,1],\,G_\eta^0=G_\eta^0(t,x,v):=\mathbf{M},\, G_\eta^n=G_\eta^n(t,x,v);\\
\pa_t G^n_\eta+\lr{t}^{-2}\mathsf{T}G^n_\eta=\lr{t}^{-3-\gamma}\mathsf{Q}_\eta(G^{n-1}_\eta, G^n_\eta);\\
G^{n}_\eta|_{t=0}=G_0\ge0.
\end{array}	
\right.
 \een 
  Because of the cutoff on the deviation angle, the equation \eqref{APSSBE1} is a hyperbolic system and $\mathsf{Q}_\eta$ can be decomposed into the gain term and lost term. Thus, (i). by Duhamel's principle, the non-negativity of $G^n$ follows the non-negativity of $G^{n-1}$ and the initial data $G_0$; (2). by the estimates in {\it Step 1}, one may obtain that there exists a universal lifespan $T_*\le1$(independent of $n$ and $\eta$) such that for $n\in\N$,
	 \beno \sup_{t\in[0,T_*]}|\!|\!|G_\eta^{n}(t)|\!|\!|_6^2\le  |\!|\!|\mathbf{M}|\!|\!|_6^2+2. \eeno 
These are enough to get the   non-negative  and local solution   to the following Cauchy problem
     \ben\label{APSSBE2}  \left \{
\begin{array}{lr}
\pa_t G_\eta+\lr{t}^{-2}\mathsf{T}G_\eta=\lr{t}^{-3-\gamma}\mathsf{Q}_\eta(G_\eta, G_\eta);\\
G|_{t=0}=G_0\ge0,
\end{array}	
\right.
 \een 
with the uniform-in-$\eta$ estimate that  $\sup_{t\in[0,T_*]}|\!|\!|G_\eta(t)|\!|\!|_6^2\lesssim 1.$ Applying the compactness argument to $\{G_{1/n}\}_{n\in\N}$, it is not difficult to get the  the non-negativity and local existence of solution to \eqref{DefiSSBES}. From this together with the estimates in {\it Step 1} and  {\it Step 2}, we complete the proof to the global well-posedness of \eqref{DefiSSBES} near $\mathbf{M}$.

 \underline{\it Step 4: Scattering theory.} We split the proof into two parts. The first part is to show 
 \ben\label{EstTQL1} \sup_{t\in[0,\infty[}\|\mathsf{T}G(t)\|_{L^1_{x,v}} + \|\mathsf{Q}(G, G)(t)\|_{L^1_{x,v}} \lesssim 1.\een
By the definition of $\mathsf{T}$, it is easy to see that $\|\mathsf{T}G(t)\|_{L^1_{x,v}}\lesssim |\!|\!|G(t)|\!|\!|_6\lesssim1$. For the collision operator $\mathsf{Q}$, we have the following decomposition: $\mathsf{Q}(G,G)=\sum_{i=1}^4\mathsf{Q}_i(G,G)$, where  
\beno && \mathsf{Q}_1(G,G):=\int (G_*'G'-G_*G)B\mathrm{1}_{|v'-v|\ge1}d\sigma dv_*;\, \mathsf{Q}_2(G,G):=\int  B(G' - G)(G'_* - G_*)\mathrm{1}_{|v'-v|\le1}d\sigma dv_*;\\
&&\mathsf{Q}_3(G,G):=\int BG_*(G' - G) \mathrm{1}_{|v'-v|\le1}d\sigma dv_*;\quad \mathsf{Q}_4(G,G):=\int  BG(G'_* - G_*)\mathrm{1}_{|v'-v|\le1}d\sigma dv_*.
 \eeno 
 Since $\mathsf{Q}_1$ is a cutoff collision operator, we directly have $\|\mathsf{Q}_1(G,G)\|_{L^1_{x,v}}\lesssim |\!|\!|G(t)|\!|\!|^2_6\lesssim1$. For $\mathsf{Q}_2$, by Taylor expansion for $(G' - G)$ and $(G'_* - G_*)$, we deduce that
 \beno  \|\mathsf{Q}_2(G,G)\|_{L^1_{x,v}}\le \int |\na G|(\xi_1)|\na G|(\xi_2)|v-v_*|^{\gamma+2}\theta^2 d\sigma dv_*dvdxd\kappa d\kappa_1, \eeno 
where $\xi=\kappa v+(1-\kappa)v'$ and $\xi_1=\kappa_1 v_*+(1-\kappa_1)v'_*$. Thanks to \eqref{xivchange} and \eqref{xiv*change}, we derive that $\|\mathsf{Q}_2(G,G)\|_{L^1_{x,v}}\lesssim |\!|\!|G(t)|\!|\!|^2_6\lesssim1$. We claim that  $\|\mathsf{Q}_3(G,G)\|_{L^1_{x,v}}+\|\mathsf{Q}_3(G,G)\|_{L^1_{x,v}}\lesssim |\!|\!|G(t)|\!|\!|^2_6\lesssim1$. This follows the Taylor expansion up to the second order of $G'-G$ and $G'_*-G_*$. We skip the details here and complete the proof of the claim.

From the equation \eqref{DefiSSBES}, we have 
\beno G(t) = G(0) - \int_0^t\langle \mathsf{s} \rangle^{-2}(\mathsf{T}G)(\mathsf{s})d\mathsf{s} + \int_0^t\langle \mathsf{s} \rangle^{-3 - \gamma}\mathsf{Q}(G, G)(\mathsf{s})d\mathsf{s}. \eeno 
Thanks to \eqref{EstTQL1}, we deduce that 
\ben\label{DefiGinfty} G_\infty=G_\infty(x,v):= G(0) - \int_0^\infty \langle \mathsf{s} \rangle^{-2}(\mathsf{T}G)(\mathsf{s})d\mathsf{s} + \int_0^\infty\langle \mathsf{s} \rangle^{-3 - \gamma}\mathsf{Q}(G, G)(\mathsf{s})d\mathsf{s}\in L^1_{x,v}. \een
Moreover, $G\in C([0,\infty];L^1_{x,v})$ and it holds that 
\beno \|G(t)-G_\infty\|_{L^1_{x,v}}\le \int_t^\infty \langle \mathsf{s} \rangle^{-2}\|\mathsf{T}G)(\mathsf{s})\|_{L^1_{x,v}}d\mathsf{s} + \int_t^\infty\langle \mathsf{s} \rangle^{-3 - \gamma}\|\mathsf{Q}(G, G)(\mathsf{s})\|_{L^1_{x,v}}d\mathsf{s}\lesssim \max\{\lr{t}^{-1}, \lr{t}^{-\gamma-2}\}.\eeno 
	This ends the proof of the scattering theory. \end{proof} 
 

\section{Proof of Theorem \ref{GWPPRSSBE}: propagation of regularity}
	In this section, we will prove the propagation of analytic smoothness of the equation \eqref{DefiSSBES}. We address   that the following restrictions hold:
	\begin{equation*}
		s \in (0, \frac{1}{3}), \:\:\: \gamma \in ]-2, -2s]. \end{equation*}
To prove the desired result, we begin with several technical lemmas.

\begin{lemma}\label{lemTe1}
		There exists a constant $C = C(\delta)$ such that, for any $a_1, a_2, a, b \in \mathbb{N}$ satisfying that $a_1 + a_2 = a, 2a_1 \le a, b \le 4$, the following inequality holds:
		\begin{equation*}
			C_a^{a_1}\frac{1}{(a!)^{1 + \delta}} \le C(\delta)(a_1 + 1)^{-7/2}\frac{1}{((a_1 + b)!)^{1 + \delta}}\frac{1}{(a_2!)^{1 + \delta}}.
		\end{equation*}
	\end{lemma}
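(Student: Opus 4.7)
My approach is to rewrite the inequality in a form where each factor can be controlled separately, with the polynomial loss $(a_1+1)^{7/2}$ absorbed by extra decay coming from the constraint $2a_1 \le a$.

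Expanding $C_a^{a_1} = a!/(a_1!\,a_2!)$ and rearranging, the claimed inequality is equivalent to
\[
(a_1+1)^{7/2}\cdot\frac{(a_1+b)!}{a_1!}\cdot\Bigl(\frac{(a_1+b)!\,a_2!}{a!}\Bigr)^{\delta}\le C(\delta).
\]
The middle factor is easy: $(a_1+b)!/a_1! = \prod_{j=1}^{b}(a_1+j)\le(a_1+b)^b\le C(a_1+1)^4$ since $b\le 4$. So it remains to show that the last factor decays faster than $(a_1+1)^{-15/2}$.

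The key identity, valid when $a_2\ge b$, is
\[
\frac{(a_1+b)!\,a_2!}{a!}=b!\prod_{j=1}^{a_2-b}\frac{b+j}{a_1+b+j},
\]
obtained by writing out $a!/(a_1+b)!$ and $a_2!/b!$ as products of consecutive integers. Since each factor is $\le 1$, we may truncate to the first $k$ terms (for any $k\le a_2-b$). The elementary bound $\frac{b+j}{a_1+b+j}\le\frac{2(b+j)}{a_1+1}$ (which holds because $a_1+1\le 2(a_1+b+j)$ as soon as $b+j\ge 1$) then yields
\[
\frac{(a_1+b)!\,a_2!}{a!}\le\frac{2^{k}(b+k)!}{(a_1+1)^{k}}.
\]
Given $\delta>0$, choose $k_\delta:=\lceil 15/(2\delta)\rceil$; whenever $a_2\ge b+k_\delta$ the bound above with $k=k_\delta$ gives $\bigl(\tfrac{(a_1+b)!\,a_2!}{a!}\bigr)^\delta\le C(\delta)(a_1+1)^{-15/2}$, and combining with the two earlier factors produces a universal bound $C(\delta)$.

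For the residual case $a_2<b+k_\delta$, the constraint $a_1\le a_2$ forces $a_1$ (and hence $a$) to be bounded by a constant depending only on $\delta$, so the entire left-hand side is trivially bounded by $C(\delta)$. The main (mild) obstacle is essentially bookkeeping: picking the truncation level $k$ in a $\delta$-dependent way and verifying that the exponents match up to give precisely the prescribed loss $(a_1+1)^{-7/2}$; no deeper combinatorial input is needed, only the consecutive-product representation above together with the hypothesis $2a_1\le a$.
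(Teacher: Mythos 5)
Your proof is correct, but it takes a genuinely different route from the paper's. The paper works with the same equivalent reformulation
\[
(a_1+1)^{7/2}\Bigl[\tfrac{(a_1+b)!}{a_1!}\Bigr]^{1+\delta}\lesssim (C_a^{a_1})^\delta,
\]
then observes $C_a^{a_1}\ge C_{2a_1}^{a_1}$ (from $2a_1\le a$) and applies Stirling's formula to the central binomial coefficient, which grows like $4^{a_1}/\sqrt{a_1}$ and hence dominates any fixed polynomial in $a_1$ raised to the $\delta$-th power. You instead avoid any asymptotic input: you express $(a_1+b)!\,a_2!/a!$ as the explicit product $b!\prod_{j=1}^{a_2-b}\frac{b+j}{a_1+b+j}$ of factors $\le 1$, truncate to a $\delta$-dependent number $k_\delta=\lceil 15/(2\delta)\rceil$ of terms, and use the pointwise bound $\frac{b+j}{a_1+b+j}\le\frac{2(b+j)}{a_1+1}$ to extract a decay $(a_1+1)^{-k_\delta}$ whose $\delta$-th power absorbs the polynomial prefactor; the remaining range $a_2<b+k_\delta$ forces (via $a_1\le a_2$, again from $2a_1\le a$) all parameters into a $\delta$-bounded box and is handled trivially. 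Both arguments are valid and of comparable length; yours is more elementary and self-contained, while the paper's is slicker once one is willing to quote Stirling.
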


	\begin{proof}
		It is equivalent to prove that
		\ben\label{EstTec1}
			(a_1 + 1)^{7/2}\big[\frac{(a_1 + b)!}{a_1!}\big]^{1 + \delta} \lesssim (C_a^{a_1})^\delta.
		\een
		Obviously it holds for $a_1 = 0$. Let $a_1 \ge 1$. Since $b \le 4$, we have $(L.H.S.)$ of \eqref{EstTec1} is bounded by
$(1+a_1)^{8 + 4\delta}$.  Using Stirling's formula, one may derive that for $b>1$, one may derive that 
		\begin{equation*}
			\lim_{b \to \infty}\frac{(C_{2b}^b)^\delta}{(1+b)^{8 + \frac{4}{\delta}}} \gtrsim \lim_{b \to \infty}\big(\frac{\sqrt{b}(2b/e)^{2b}}{b(b/e)^{2b}}\big)^\delta \frac{1}{(1+b)^{8 + \frac{4}{\delta}}} \gtrsim \lim_{b \to \infty}\frac{2^{2b\delta}}{b^{8 + \delta + \frac{4}{\delta}}} = +\infty.
		\end{equation*}
		Then \eqref{EstTec1} follows since $C_a^{a_1} \ge C_{2a_1}^{a_1}$.  We end the proof.
	\end{proof}

    To prove the propagation of smoothness, we further introduce the function spaces as follows:
    \ben  &&\label{FSBN} \|f(t)\|_{BN}^2:=\sum_{\alpha\in\N^6}\|f(t)\|_{B_\alpha}^2: = \sum_{\alpha\in\N^6}(|\alpha| + 1)\frac{q(t)^{2(|\alpha| + 1)}}{(|\alpha|!)^{2 + 2\delta}}\|\langle z \rangle^4\mathsf{D}^\alpha f(t)\|_{L^2_xL^2}^2;\\
    &&\label{FSZ} \|f(t)\|_{Z}^2:=\sum_{\alpha\in\N^6}\|f(t)\|^2_{Z_\alpha} :=\sum_{\alpha\in\N^6} \frac{q(t)^{2(|\alpha| + 1)}}{(|\alpha|!)^{2 + 2\delta}}\|\langle z \rangle^4\mathsf{D}^\alpha f(t)\|_{L^2_xH^s}^2;\\
    &&\label{FSD}\|f(t)\|_{D}^2:=\sum_{\alpha\in\N^6}\|f(t)\|_{D_\alpha}:= \sum_{\alpha\in\N^6}(|\alpha| + 1)\frac{p(t)^{2(|\alpha| + 1)}}{(|\alpha|!)^{2 + 2\delta}}\|\langle z \rangle^4\mathsf{D}^\alpha f(t)\|_{L^2_xL^2}^2.\een
 To simply  \eqref{FSAN} and \eqref{FSMA}, we also define: 
 \ben\label{FSAMalpha}
    \|f(t)\|_{A_\alpha}^2:= \frac{q(t)^{2(|\alpha| + 1)}}{(|\alpha|!)^{2 + 2\delta}}\|\langle z \rangle^4\mathsf{D}^\alpha f(t)\|_{L^2_xL^2}^2,\,
   \|f(t)\|^2_{M_\alpha} = \frac{p(t)^{2(|\alpha| + 1)}}{(|\alpha|!)^{2 + 2\delta}}\|\langle z \rangle^4\mathsf{D}^\alpha f(t)\|_{L^2_xL^2}^2.
		\een
 
 Next we have 
	\begin{lemma}\label{lemTe2} For $\eta\ll1$, 
		it holds that $\|g(t)\|^2_Z \lesssim \eta\|g(t)\|_{BN}^2+ C_\eta\|g\|_{AN}^2$.
	\end{lemma}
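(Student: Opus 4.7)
The strategy is to interpolate in the $v$-variable and then apply Young's inequality twice with $\alpha$-dependent parameters, matching the different weights appearing in $Z$, $BN$, and $AN$. For each $\alpha$, H\"older in $x$ combined with the elementary Fourier interpolation $|f|_{H^s}^2\le|f|_{H^1}^{2s}|f|_{L^2}^{2(1-s)}$ yields
\[
\|\lr{z}^4\mathsf{D}^\alpha g\|_{L^2_xH^s}^2\le\|\lr{z}^4\mathsf{D}^\alpha g\|_{L^2_xH^1_v}^{2s}\|\lr{z}^4\mathsf{D}^\alpha g\|_{L^2_xL^2}^{2(1-s)}.
\]
Since $\partial_v=(\mathsf{Y}-t\mathsf{X})/\lr{t}$ and $|\partial_v\lr{z}^4|\lesssim\lr{z}^3$, the $H^1_v$-norm on the right is bounded by $\|\lr{z}^4\mathsf{D}^\alpha g\|_{L^2_xL^2}^2+\sum_{i=1}^6\|\lr{z}^4\mathsf{D}^{\alpha+e_i}g\|_{L^2_xL^2}^2$. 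A scaled Young then splits the product as $s\lambda_\alpha\cdot(H^1_v\text{-part})+(1-s)\lambda_\alpha^{-s/(1-s)}\cdot(L^2\text{-part})$, with $\lambda_\alpha>0$ still free.

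Multiplying by $q(t)^{2(|\alpha|+1)}/(|\alpha|!)^{2+2\delta}$ and summing over $\alpha$, the shifted part is handled via the weight identity
\[
\frac{q^{2(|\alpha|+1)}/(|\alpha|!)^{2+2\delta}}{q^{2(|\beta|+1)}/(|\beta|!)^{2+2\delta}}=q^{-2}(|\alpha|+1)^{2+2\delta}\qquad\text{for }|\beta|=|\alpha|+1,
\]
combined with $\|g\|_{B_\beta}^2=(|\beta|+1)\|g\|_{A_\beta}^2$: each shifted contribution collapses to $\sim\lambda_\alpha(|\alpha|+1)^{1+2\delta}\|g\|_{B_\beta}^2$ (up to bounded factors of $q^{-2}$ and $(|\alpha|+2)^{-1}$). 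Choosing $\lambda_\alpha=\eta(|\alpha|+1)^{-(1+2\delta)}$ and reindexing $\sum_\alpha\sum_{i=1}^6\le 6\sum_\beta$ absorbs this piece into $C\eta\|g\|_{BN}^2$; the unshifted $H^1_v$-term contributes at most $C\eta\|g\|_{AN}^2\le C\eta\|g\|_{BN}^2$.

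The residual $L^2$-piece equals $(1-s)\eta^{-s/(1-s)}\sum_\alpha(|\alpha|+1)^{s(1+2\delta)/(1-s)}\|g\|_{A_\alpha}^2$. Here the restriction $\delta\le(1-3s)/(2s)$ is used decisively: it gives $s(1+2\delta)\le 1-2s$, and therefore the exponent is at most $(1-2s)/(1-s)$, which is strictly less than $1$ for every $s>0$. A second, elementary Young inequality of the form $x^{1-\varepsilon}\le\eta' x+C_{\eta',\varepsilon}$ applied with $x=|\alpha|+1$ then yields
\[
\sum_\alpha(|\alpha|+1)^{s(1+2\delta)/(1-s)}\|g\|_{A_\alpha}^2\le\eta'\|g\|_{BN}^2+C_{\eta'}\|g\|_{AN}^2.
\]
Taking $\eta'=\eta^{1/(1-s)}$ so that $\eta^{-s/(1-s)}\eta'=\eta$ and relabeling constants produces $\|g\|_Z^2\lesssim\eta\|g\|_{BN}^2+C_\eta\|g\|_{AN}^2$. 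The main technical obstacle is the combinatorial bookkeeping of weights under the index shift and the verification that the precise range $\delta\in]0,(1-3s)/(2s)]$ (together with $s\in(0,1/3)$) is exactly what makes the two Young steps compatible.
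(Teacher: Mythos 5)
Your proof is correct and uses the same two ingredients as the paper (the $H^s$-interpolation between $L^2$ and $H^1$, and the constraint $2+2\delta\le(1-s)/s$ to absorb the index shift $\alpha\to\alpha+e_i$), but it organizes the convexity argument differently. The paper applies a single H\"older inequality over the sum in $\alpha$, distributing $(|\alpha|+1)^{1-s}$ to the $L^2$-factor and $(|\alpha|+1)^{-(1-s)}$ to the $H^1$-factor; this yields the clean multiplicative bound $\|g\|_Z^2\lesssim\|g\|_{BN}^{2(1-s)}\|g\|_{AN}^{2s}$, after which the stated lemma is one implicit Young. You instead apply Young per $\alpha$ with an $\alpha$-dependent parameter $\lambda_\alpha=\eta(|\alpha|+1)^{-(1+2\delta)}$ chosen to neutralize the factor $(|\alpha|+1)^{2+2\delta}$ produced by the index shift; the cost is that the residual $L^2$-piece now carries a growing polynomial factor $(|\alpha|+1)^{s(1+2\delta)/(1-s)}$, which you control by a second Young, using (correctly) that the exponent is $\le(1-2s)/(1-s)<1$ exactly when $\delta\le(1-3s)/(2s)$. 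So you use the restriction on $\delta$ at a later stage than the paper does, and you pay with one extra Young step, but you arrive at the same bound; the paper's H\"older route is more economical and also produces the sharper intermediate interpolation identity as a byproduct.
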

	\begin{proof}
		Using   interpolation inequality, we get that
		\beno
			&&\|g\|_Z^2  = \sum_{\alpha\in\N^6}\frac{q(t)^{2(|\alpha| + 1)}}{(|\alpha|!)^{2 + 2\delta}}\|\langle z \rangle^4\mathsf{D}^\alpha g\|_{L^2_xH^s}^2 \lesssim \sum_{\alpha\in\N^6}\frac{q(t)^{2(|\alpha| + 1)}}{(|\alpha|!)^{2 + 2\delta}}\|\langle z \rangle^4\mathsf{D}^\alpha g\|_{L^2_xL^2}^{2 - 2s}\|\langle z \rangle^4\mathsf{D}^\alpha g\|_{L^2_xH^1}^{2s}\\ 
			&&\lesssim \big(\sum_{\alpha\in\N^6}(|\alpha| + 1)\frac{q(t)^{2(|\alpha| + 1)}}{(|\alpha|!)^{2 + 2\delta}}\|\langle z \rangle^4\mathsf{D}^\alpha g\|_{L^2_xL^2}^2\big)^{1 - s} \times \big(\sum_{\alpha\in\N^6}(|\alpha| + 1)^{-\f{1 - s}{s}}\frac{q(t)^{2(|\alpha| + 1)}}{(|\alpha|!)^{2 + 2\delta}}\|\langle z \rangle^4\mathsf{D}^\alpha g\|_{L^2_xH^1}^2\big)^s\\
			&&= \|g\|_{BN}^{2(1 - s)} \times \big(\sum_{\alpha\in\N^6}\sum_{|\beta| \le 1}(|\alpha| + 1)^{-\f{1 - s}{s}}\frac{q(t)^{2(|\alpha| + 1)}}{(|\alpha|!)^{2 + 2\delta}}\|\langle z \rangle^4\mathsf{D}^{\alpha + \beta}g\|_{L^2_xL^2}^2\big)^s.\\
		\eeno
        If $|\beta| = 1$, then from the facts that 
			$q(t)^{2|\beta|} \sim 1, \:\:\: \frac{(|\alpha + \beta|)!}{|\alpha|!} = |\alpha| + 1, \:\:\: 2 + 2\delta \le \frac{1 - s}{s}$,
		we conclude that 
		\begin{equation*}
		\begin{aligned}
			&\sum_{\alpha\in\N^6}\sum_{|\beta| = 1}(|\alpha| + 1)^{-\f{1 - s}{s}}\frac{q(t)^{2(|\alpha| + 1)}}{(|\alpha|!)^{2 + 2\delta}}\|\langle z \rangle^4\mathsf{D}^{\alpha + \beta}g\|_{L^2_xL^2}^2  \le \sum_{\alpha\in\N^6}\sum_{|\beta| = 1}\frac{q(t)^{2(|\alpha + \beta| + 1)}}{((|\alpha + \beta|)!)^{2 + 2\delta}}\|\langle z \rangle^4\mathsf{D}^{\alpha + \beta}g\|_{L^2_xL^2}^2 \le 6\|g\|_{AN}^2. 
		\end{aligned}
		\end{equation*}
      The desired result follows.
	\end{proof}

 Now we are in a position to prove $(ii)$ of Theorem \ref{GWPPRSSBE}.

 \begin{proof}[Proof of Theorem \ref{GWPPRSSBE}:Propagation of regularity.] We split the proof into two steps.
 \smallskip

\noindent\underline{\it Step 1: Propagation of the norm $\|\cdot\|_{AN}$.}	 Let $\alpha \in \mathbb{N}^6$. Recalling \eqref{Eq-g}, we first derive that
	\beno
		\partial_t[q(t)^{|\alpha| + 1}\langle z \rangle^4\mathsf{D}^\alpha g] + \langle t \rangle^{-2}\mathsf{T}[q(t)^{|\alpha| + 1}\langle z \rangle^4\mathsf{D}^\alpha g] - \frac{(|\alpha| + 1)q'(t)}{q(t)}q(t)^{|\alpha| + 1}
		\langle z \rangle^4\mathsf{D}^\alpha g \\= \langle t \rangle^{-3 - \gamma}q(t)^{|\alpha| + 1}\langle z \rangle^4\mathsf{D}^\alpha[\mathsf{Q}(G, g) + \mathsf{Q}(g, \mathbf{M})].
\eeno
	By basic energy method, one has
	\begin{multline*}
		\frac{1}{2}\frac{d}{dt}\frac{q(t)^{2(|\alpha| + 1)}}{(|\alpha|!)^{2 + 2\delta}}\|\langle z \rangle^4\mathsf{D}^\alpha g\|_{L_x^2L^2}^2
		- \frac{q'(t)}{q(t)}(|\alpha| + 1)\frac{q(t)^{2(|\alpha| + 1)}}{(|\alpha|!)^{2 + 2\delta}}\|\langle z \rangle^4\mathsf{D}^\alpha g\|_{L_x^2L^2}^2 \\
		= \langle t \rangle^{-3 - \gamma}\frac{q(t)^{2(|\alpha| + 1)}}{(|\alpha|!)^{2 + 2\delta}}\langle \mathsf{D}^\alpha[\mathsf{Q}(G, g) + \mathsf{Q}(g, \mathbf{M})], \langle z \rangle^8\mathsf{D}^\alpha g \rangle.
	\end{multline*} 
	Since 
 $\frac{3}{4} < q(t) \le 1, \:\:\: q'(t) = -(4T_\gamma)^{-1}\langle t \rangle^{-3 - \gamma}
 		$, by \eqref{FSBN} and \eqref{FSAMalpha}, it holds that
	\ben\label{EnerggAalpha}
		\frac{d}{dt}\|g\|^2_{A_\alpha} + C(\gamma)\langle t \rangle^{-3 - \gamma}\|g\|^2_{B_\alpha} \lesssim \langle t \rangle^{-3 - \gamma}\frac{q(t)^{2(|\alpha| + 1)}}{(|\alpha|!)^{2 + 2\delta}}\langle \mathsf{D}^\alpha[\mathsf{Q}(G, g) + \mathsf{Q}(g, \mathbf{M})], \langle z \rangle^8\mathsf{D}^\alpha g \rangle.
	\een

 \underline{Estimate of the term involving $\mathsf{Q}(G, g)$.} It can be reduced to consider following two terms:
	 \beno \mathfrak{R}_1:=\sum_{\alpha_1 + \alpha_2 = \alpha}C_\alpha^{\alpha_1}\langle \mathsf{Q}(\mathsf{D}^{\alpha_1}G, \langle z \rangle^4\mathsf{D}^{\alpha_2}g), \langle z \rangle^4\mathsf{D}^\alpha g \rangle;
	 \\\mathfrak{R}_2:=
		\sum_{\alpha_1 + \alpha_2 = \alpha}C_\alpha^{\alpha_1}\langle \langle z \rangle^4\mathsf{Q}(\mathsf{D}^{\alpha_1}G, \mathsf{D}^{\alpha_2}g) - \mathsf{Q}(\mathsf{D}^{\alpha_1}G, \langle z \rangle^4\mathsf{D}^{\alpha_2}g), \langle z \rangle^4\mathsf{D}^\alpha g \rangle.
	\eeno
	 
	Since  $\gamma + 2s \le 0$, by  Corollary \ref{HLB1}, we have 
	 
	\begin{equation*}
	|\mathfrak{R}_1|	\lesssim \sum_{\alpha_1 + \alpha_2 = \alpha}C_\alpha^{\alpha_1}\int|\langle z \rangle^4\mathsf{D}^{\alpha_1}G|_{L^2}|\langle z \rangle^4\mathsf{D}^{\alpha_2}g|_{H^s}|\langle z \rangle^4\mathsf{D}^\alpha g|_{H^s}dx.
	\end{equation*}
	Thanks to Lemma \ref{Comm2}, we deduce that 
	\begin{equation*}
	\begin{aligned}
		|\mathfrak{R}_2|\lesssim \sum_{\alpha_1 + \alpha_2 = \alpha}C_\alpha^{\alpha_1}\int|\langle z \rangle^4\mathsf{D}^{\alpha_1}G|_{H^2}|\langle z \rangle^4\mathsf{D}^{\alpha_2}g|_{H^s}|\langle z \rangle^4\mathsf{D}^\alpha g|_{L^2}dx; \\
		 |\mathfrak{R}_2|\lesssim \sum_{\alpha_1 + \alpha_2 = \alpha}C_\alpha^{\alpha_1}\int|\langle z \rangle^4\mathsf{D}^{\alpha_1}G|_{L^2}|\langle z \rangle^4\mathsf{D}^{\alpha_2}g|_{H^1}|\langle z \rangle^4\mathsf{D}^\alpha g|_{L^2}dx.\\
	\end{aligned}
	\end{equation*}

	These imply that 
	\begin{equation}\label{I01}
	\begin{aligned}
		|(\mathfrak{R}_1+\mathfrak{R}_2)| &\lesssim \sum_{\alpha_1 + \alpha_2 = \alpha}C_\alpha^{\alpha_1}\|\langle z \rangle^4\mathsf{D}^{\alpha_1}G\|_{L^\infty_x H^2}\|\langle z \rangle^4\mathsf{D}^{\alpha_2}g\|_{L^2_xH^s}\|\langle z \rangle^4\mathsf{D}^\alpha g\|_{L^2_xH^s};\\
		|(\mathfrak{R}_1+\mathfrak{R}_2)|  &\lesssim \sum_{\alpha_1 + \alpha_2 = \alpha}C_\alpha^{\alpha_1}\|\langle z \rangle^4\mathsf{D}^{\alpha_1}G\|_{L^2_xL^2}\|\langle z \rangle^4\mathsf{D}^{\alpha_2}g\|_{L^\infty_x H^{1}}\|\langle z \rangle^4\mathsf{D}^\alpha g\|_{L^2_xH^s}.\\
	\end{aligned}
	\end{equation}
	We separate  $(\mathfrak{R}_1+\mathfrak{R}_2)$ into two parts: $(\mathfrak{R}_1+\mathfrak{R}_2)_1$ and $(\mathfrak{R}_1+\mathfrak{R}_2)_2$, which correspond to cases  $2|\alpha_1| \le |\alpha|$  and $2|\alpha_1| > |\alpha|$ respectively. 
    Using the first inequality of \eqref{I01}, we get that  
	\begin{equation*}
	 |(\mathfrak{R}_1+\mathfrak{R}_2)_1| \lesssim \sum_{\substack{\alpha_1 + \alpha_2 = \alpha\\ 2|\alpha_1| \le |\alpha|}}\sum_{|\beta| \le 4}C_\alpha^{\alpha_1}\|\langle z \rangle^4\mathsf{D}^{\alpha_1 + \beta}G\|_{L^2_xL^2}\|\langle z \rangle^4\mathsf{D}^{\alpha_2}g\|_{L^2_xH^s}\|\langle z \rangle^4\mathsf{D}^\alpha g\|_{L^2_xH^s}.
	\end{equation*}
	Noting that $C_\alpha^{\alpha_1} \le C_{|\alpha|}^{|\alpha_1|},$ by Lemma \ref{lemTe1}, we deduce that
	\begin{equation}\label{I101}
		(|\alpha|!)^{-2 - 2\delta}|(\mathfrak{R}_1+\mathfrak{R}_2)_1|  \lesssim \sum_{\substack{\alpha_1 + \alpha_2 = \alpha\\ 2|\alpha_1| \le |\alpha|}}\sum_{|\beta| \le 4}\frac{\|\langle z \rangle^4\mathsf{D}^{\alpha_1 + \beta}G\|_{L^2_xL^2}}{(|\alpha_1 + \beta|!)^{1 + \delta}}
		\frac{\|\langle z \rangle^4\mathsf{D}^{\alpha_2}g\|_{L^2_xH^s}}{(|\alpha_2|!)^{1 + \delta}}\frac{\|\langle z \rangle^4\mathsf{D}^\alpha g\|_{L^2_xH^s}}{(|\alpha_1| + 1)^{7/2}(|\alpha|!)^{1 + \delta}}.
	\end{equation} 
	Since $q(t) \in (3/4, 1], |\beta| \le 4$, then $q(t)^{-|\beta| - 1} \lesssim 1$. Substituting it into \eqref{I101}, by \eqref{FSZ}, we finally get that
	\begin{equation*}
		\frac{q(t)^{2(|\alpha| + 1)}}{(|\alpha|!)^{2 + 2\delta}}|(\mathfrak{R}_1+\mathfrak{R}_2)_1|  \lesssim\sum_{\substack{\alpha_1 + \alpha_2 = \alpha\\ 2|\alpha_1| \le |\alpha|}}\sum_{|\beta| \le 4} \|G\|_{A_{\alpha_1 + \beta}}\|g\|_{Z_{\alpha_2}}\frac{\|g\|_{Z_\alpha}}{(|\alpha_1| + 1)^7},
	\end{equation*}
	which yields that 
	\begin{equation*}
	\begin{aligned}
		\sum_{\alpha\in\N^6}\frac{q(t)^{2(|\alpha| + 1)}}{(|\alpha|!)^{2 + 2\delta}}|(\mathfrak{R}_1+\mathfrak{R}_2)_1| &\lesssim   \|G\|_{AN}^2\|g\|^2_Z + \|g\|_Z^2.\\
	\end{aligned}
	\end{equation*}

	For  $(\mathfrak{R}_1+\mathfrak{R}_2)_2,$ we use the second inequality in \eqref{I01} to have
	\begin{equation*}
		|(\mathfrak{R}_1+\mathfrak{R}_2)_2| \lesssim \sum_{\substack{\alpha_1 + \alpha_2 = \alpha\\ 2|\alpha_1| > |\alpha|}}\sum_{|\beta| \le 3}C_\alpha^{\alpha_1}\|\langle z \rangle^4\mathsf{D}^{\alpha_1}G\|_{L^2_xL^2}\|\langle z \rangle^4\mathsf{D}^{\alpha_2 + \beta}g\|_{L^2_xH^s}\|\langle z \rangle^4\mathsf{D}^\alpha g\|_{L^2_xH^s}.
	\end{equation*}
	The similar argument can be applied to get the same estimate as $(\mathfrak{R}_1+\mathfrak{R}_2)_1$. Thus, we have   
	 \begin{equation*}
	\begin{aligned}
		\sum_{\alpha\in\N^6}\frac{q(t)^{2(|\alpha| + 1)}}{(|\alpha|!)^{2 + 2\delta}}|(\mathfrak{R}_1+\mathfrak{R}_2)| &\lesssim   \|G\|_{AN}^2\|g\|^2_Z + \|g\|_Z^2.\\
	\end{aligned}
	\end{equation*}

	 \underline{Estimate of the term involving $\mathsf{Q}(g, \mathbf{M})$.}   Let $\mathfrak{L}:=\langle \mathsf{D}^\alpha\mathsf{Q}(g, \mathbf{M}), \langle z \rangle^8\mathsf{D}^\alpha g \rangle$. Following the argument in the before, we deduce that 
	\begin{equation*}\sum_{\alpha\in\N^6} \frac{q(t)^{2(|\alpha| + 1)}}{(|\alpha|!)^{2 + 2\delta}}\mathfrak{L} \lesssim \|g\|_{AN}^2\|\mathbf{M}\|_Z^2 + \|g\|_Z^2.
	\end{equation*}
	\smallskip

	Putting these estimates together, we are led to that
	\begin{equation}\label{001}
		\frac{d}{dt}\|g\|_{AN}^2 + C(\gamma)\langle t \rangle^{-3 - \gamma}\|g\|_{BN}^2 \lesssim \langle t \rangle^{-3 - \gamma}(\|G\|_{AN}^2\|g\|_Z^2 + \|g\|_{AN}^2\|\mathbf{M}\|_Z^2 + \|g\|_Z^2).
	\end{equation}
	 Applying Lemma \ref{lemTe2}, we have
	\begin{equation*}
		\frac{d}{dt}\|g\|_{AN}^2 + C(\gamma)\langle t \rangle^{-3 - \gamma}\|g\|_{BN}^2 \lesssim C_\eta \langle t \rangle^{-3 - \gamma}(\|g\|_{AN}^2 + 1)\|g\|_{AN}^2 + \eta\langle t \rangle^{-3 - \gamma}(\|g\|_{AN}^2+1)\|g\|_{BN}^2.
	\end{equation*}
     From this together with the initial condition that $\|g(0)\|_{AN}^2=\epsilon_1\ll1$, we obtain the upper bound in \eqref{EstPRLB}. 
\smallskip

\noindent\underline{\it Step 2: Propagation of the norm $\|\cdot\|_{MA}$.}	 Let $\alpha \in \mathbb{N}^6$. We first derive that
\beno
			\partial_t[p(t)^{|\alpha| + 1}\langle z \rangle^4\mathsf{D}^\alpha g] + \langle t \rangle^{-2}\mathsf{T}[p(t)^{|\alpha| + 1}\langle z \rangle^4\mathsf{D}^\alpha g] - \frac{(|\alpha| + 1)p'(t)}{p(t)}p(t)^{(|\alpha| + 1)^{1 + \delta}}
			\langle z \rangle^4\mathsf{D}^\alpha g \\= \langle t \rangle^{-3 - \gamma}p(t)^{|\alpha| + 1}\langle z \rangle^4\mathsf{D}^\alpha[\mathsf{Q}(G, g) + \mathsf{Q}(g, \mathbf{M})].
		\eeno
		The basic energy implies that 
		\beno
			\frac{1}{2}\frac{d}{dt}\frac{p(t)^{2(|\alpha| + 1)}}{(|\alpha|!)^{2 + 2\delta}}\|\langle z \rangle^4\mathsf{D}^\alpha g\|_{L^2_xL^2}^2
			- \frac{p'(t)}{p(t)}(|\alpha| + 1)\frac{p(t)^{2(|\alpha| + 1)^{1 + \delta}}}{(|\alpha|!)^{2 + 2\delta}}\|\langle z \rangle^4\mathsf{D}^\alpha g\|_{L^2_xL^2}^2 \\
			= \langle t \rangle^{-3 - \gamma}\frac{p(t)^{2(|\alpha| + 1)}}{(|\alpha|!)^{2 + 2\delta}}\langle \mathsf{D}^\alpha[\mathsf{Q}(G, g) + \mathsf{Q}(g, \mathbf{M})], \langle z \rangle^8\mathsf{D}^\alpha g \rangle.
		\eeno

	Noting that $\frac{1}{4} \le p(t) < \frac{1}{2}, \:\:\: p'(t) = (4T_\gamma)^{-1}\langle t \rangle^{-3 - \gamma}$, by \eqref{FSD} and \eqref{FSAMalpha},  we get that	
\ben\label{EnergygMalpha}
			\frac{d}{dt}\|g\|^2_{M_\alpha} - C(\gamma)\langle t \rangle^{-3 - \gamma}\|g\|_{D_\alpha}^2 \gtrsim
			\langle t \rangle^{-3 - \gamma}\frac{p(t)^{2(|\alpha| + 1)}}{(|\alpha|!)^{2 + 2\delta}}\langle \mathsf{D}^\alpha[\mathsf{Q}(G, g) + \mathsf{Q}(g, \mathbf{M})], \langle z \rangle^8\mathsf{D}^\alpha g \rangle.
		\een
We observe that the (R.H.S) of  \eqref{EnergygMalpha} enjoys almost the same structure as (R.H.S) of  \eqref{EnerggAalpha}. Thus by repeating the argument used in {\it Step 1}, we may derive that 
\beno \frac{d}{dt}\|g\|^2_{MA} - C(\gamma)\langle t \rangle^{-3 - \gamma}\|g\|_{D}^2 \gtrsim -C_\eta\langle t \rangle^{-3 - \gamma}(\|g\|^2_{MA} + 1)\|g\|_{MA}^2 - \eta\langle t \rangle^{-3 - \gamma}(\|g\|_{MA}^2+1)\|g\|_{D}^2. \eeno 
Since $\|g\|^2_{MA}\le \|g\|^2_{AN}\ll1$, we finally conclude that
\begin{equation*}
			\frac{d}{dt}\|g\|_{MA}^2 \gtrsim -\langle t \rangle^{-3 - \gamma}\|g\|_{MA}^2,
		\end{equation*}
		which implies the lower bound in \eqref{EstPRLB}. 
		\smallskip

\underline{\it Step 3: $G_\infty\neq \mathbf{M}$.} 
	 Suppose $G_\infty = \mathbf{M}$. Since now $\gamma+2s\le0$ and $g=G-\mathbf{M}=G-G_\infty$, by repeating the argument in proof of Theorem \ref{GWPPRSSBE}(in particular, in ${\it Step 4}$), one may further have    
	 \[\lim_{t\rightarrow\infty}\|\langle z \rangle^4 g(t)\|_{L^1_{x,v}}=0.\]  Next we claim that for any fixed $N \in \mathbb{N}$, it holds that
	  \begin{equation}\label{tinftyNMA}
			\lim_{t \to \infty}\sum_{|\alpha| \le N}\frac{p(t)^{2(|\alpha| + 1)}}{(|\alpha|!)^{2 + 2\delta}}\|\langle z \rangle^4\mathsf{D}^\alpha g\|_{L_x^2L^2}^2 = 0.
		\end{equation}
		This  follows \eqref{Upperxv} and the computation that
		\beno
			\sum_{|\alpha| \le N}\frac{p(t)^{2(|\alpha| + 1)}}{(|\alpha|!)^{2 + 2\delta}}\|\langle z \rangle^4\mathsf{D}^\alpha g(t)\|_{L_x^2L^2}^2 \lesssim 
			C(N)\|\langle z \rangle^4g(t)\|_{H_{x, v}^{2N + 4}}\|\langle z \rangle^4g(t)\|_{H_{x, v}^{-4}} \le C(N)\|g(t)\|_{AN}\|\langle z \rangle^4g(t)\|_{L_{x, v}^1}.
		\eeno

		Thanks to Theorem \ref{GWPPRSSBE}, and the facts that $p(t) \in [1/4, 1/2), q(t) \in (3/4, 1]$, we have
		\begin{equation*}
			\sum_{|\alpha| > N}\frac{p(t)^{2(|\alpha| + 1)}}{(|\alpha|!)^{2 + 2\delta}}\|\langle z \rangle^4\mathsf{D}^\alpha g(t)\|_{L_x^2L^2}^2 = \sum_{|\alpha| > N}\big(\frac{p(t)}{q(t)}\big)^{2(|\alpha| + 1)}\frac{q(t)^{2(|\alpha| + 1)}}{(|\alpha|!)^{2 + 2\delta}}\|\langle z \rangle^4\mathsf{D}^\alpha g(t)\|_{L_x^2L^2}^2 \lesssim (\frac{2}{3})^{2N}.
		\end{equation*}
		From this together with \eqref{tinftyNMA}, we derive that there exist constants $N_1,t_*\gg1$ such that for any $t\ge t_*$,
        \beno \|g(t)\|_{MA}^2=\sum_{|\alpha| > N_1}\frac{p(t)^{2(|\alpha| + 1)}}{(|\alpha|!)^{2 + 2\delta}}\|\langle z \rangle^4\mathsf{D}^\alpha g\|_{L_x^2L^2}^2+ \sum_{|\alpha| \le N_1}\frac{p(t)^{2(|\alpha| + 1)}}{(|\alpha|!)^{2 + 2\delta}}\|\langle z \rangle^4\mathsf{D}^\alpha g\|_{L_x^2L^2}^2\lesssim \epsilon_2^5,\eeno 
        which contradicts with \eqref{EstPRLB}. We conclude the desired result and end the proof.
	\end{proof}


	\section{Proof of Theorem \ref{StabilityBE}}

	This section is devoted to the proof of Theorem \ref{StabilityBE}.  Thanks to \eqref{GtoF2S} and \eqref{FandG}, result $(i)$ in Theorem \ref{GWPPRSSBE} will immediately imply result $(i)$ in Theorem \ref{StabilityBE}. Thus, we only need to give a detailed proof to result $(ii)$ in Theorem \ref{StabilityBE}.

   \begin{proof}[Proof of Theorem \ref{StabilityBE}: result $(ii)$.] We prove it by contradiction argument. Suppose \eqref{FMdistance} does not hold. Since for any $t\ge0$, $\|(F - \mathcal{M})(t)\|_{L^1_{x,v}} = \|(G - \mathbf{M})(t)\|_{L^1_{x,v}}$, it implies that there exists a sequence $\{t_n\}_{n\in\N}$ such that $ \|(G - \mathbf{M})(t_n)\|_{L^1_{x,v}}\le 1/n$. We claim that this implies that $G_\infty=\mathbf{M}$ which contradicts with the result $G_\infty\neq\mathbf{M}$. Thus we get \eqref{FMdistance}. Now it suffices to prove the claim. If $\sup_n \{t_n\}<\infty$, then by the fact that $G\in C([0,\infty];L^1_{x,v})$, we deduce that there exists a finite time $T_*$ such that $G(T_*)=\mathbf{M}$. By $H$-theorem stated in Theorem \ref{ConsHthSSBE}, we have for any $t\ge T_*$,   $\mathcal{H}[G|\mathbf{M}](t)=0$, which in turn implies that $G_\infty=\mathbf{M}$. If $\sup_n \{t_n\}=\infty$, then by scattering theory \eqref{scatteringL1}, we also derive that  $G_\infty=\mathbf{M}$. This ends the proof.
\end{proof}

	\section{Appendix}
  The Appendix consists of two parts. The first part provides a detailed proof for Proposition \ref{CharacterTM}. The second part is devoted to the list of the knowledge on the basic properties on the non-cutoff equation.    

\subsection{Proof of Proposition \ref{CharacterTM}} We first remark that Proposition \ref{CharacterTM} is given in the preprint paper \cite{LM}. To make our paper self-contained, we provide a detailed proof  but the key idea are based on \cite{LM}. 
\smallskip

To derive Proposition \ref{CharacterTM}, we only need to show

\begin{lemma}\label{LemforM2}  Let  $a, b, c, d \in \mathbb{R}, p \in \mathbb{R}^6$ and $A$ is a $3 \times 3$ skew-symmetric matrix. Suppose that $\mathcal{M}_2(t, x, v) = \exp\{-\frac{1}{2}[a|v|^2 + 2bv \cdot (x - tv) + c|x - tv|^2 + 2v^\tau Ax + p \cdot (v, x - tv)] + d\}$ satisfies $\Phi_1(\mathcal{M}_2)=1$ and $\Phi_2(\mathcal{M}_2 )=\Phi_3(\mathcal{M}_2)=0$. Then it hold that
\begin{itemize}
\item  $p=0,a, c, ac - b^2 > 0$, $P$ defined in \eqref{DefzP} is positive definite and $e^d = \frac{\sqrt{\det P}}{(2\pi)^3}$;
\item  If  $Q:= (ac - b^2)I + A^2$, then  $Q$ is also positive definite and $\det P = \det Q$;
\item \begin{equation*}
		\mathcal{M}_2(t, x, v) = \frac{\sqrt{\det Q}}{(2\pi)^3}\exp\bigg\{-\frac{1}{2}\big[a|v|^2 + 2bv \cdot (x - tv) + c|x - tv|^2 + 2v^\tau Ax\big]\bigg\}.
	\end{equation*}
\end{itemize} 
	\end{lemma}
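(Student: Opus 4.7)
The plan is to recast the exponent of $\mathcal{M}_2$ as a quadratic-plus-linear form in the new variables $z = (v,w)$ with $w := x - tv$, and then extract every claim from standard Gaussian calculus on $\R^6$. The change of variables $(x,v) \mapsto (w,v)$ has unit Jacobian, so all the moments in the lemma transform into ordinary moments of a (possibly shifted) Gaussian. The first observation is that $v^\tau A x = v^\tau A(w + tv) = v^\tau A w$, using $v^\tau A v = 0$ from the skew-symmetry of $A$. Symmetrizing the cross term via $w^\tau A v = -v^\tau A w$, the exponent rewrites as
\[
-\tfrac{1}{2}z^\tau P z - \tfrac{1}{2}p\cdot z + d, \qquad P := \begin{pmatrix} aI_3 & bI_3 + A \\ bI_3 - A & cI_3 \end{pmatrix}.
\]

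Finiteness of $\Phi_1(\mathcal{M}_2) = 1$ forces $P$ to be positive definite. Inspecting the $2\times 2$ principal submatrix of $P$ indexed by the $i$-th and $(i+3)$-th coordinates, which equals $\begin{pmatrix} a & b \\ b & c \end{pmatrix}$ since $A$ has vanishing diagonal, instantly yields $a, c > 0$ and $ac - b^2 > 0$. Next, the Schur complement of the $aI$ block gives
\[
cI - \tfrac{1}{a}(bI - A)(bI + A) = \tfrac{1}{a}\bigl[(ac - b^2)I + A^2\bigr] = \tfrac{1}{a}Q,
\]
using $(bI - A)(bI + A) = b^2 I - A^2$. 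Hence $P > 0$ is equivalent to $Q > 0$, and taking determinants yields $\det P = a^3 \det(Q/a) = \det Q$.

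To eliminate the linear term and determine $d$, complete the square: with $z_0 := -\tfrac{1}{2}P^{-1}p$ one has $-\tfrac{1}{2}z^\tau P z - \tfrac{1}{2}p\cdot z + d = -\tfrac{1}{2}(z - z_0)^\tau P(z - z_0) + \mathrm{const}$, so $\mathcal{M}_2$ in $z$-coordinates is a Gaussian centred at $z_0$. The conditions $\Phi_2(\mathcal{M}_2) = 0$ and $\Phi_3(\mathcal{M}_2) = 0$ are exactly the vanishing of the $v$- and $w$-blocks of this mean, hence $P^{-1} p = 0$, and invertibility of $P$ forces $p = 0$. The normalization $\Phi_1(\mathcal{M}_2) = 1$ then fixes $e^d = \sqrt{\det P}/(2\pi)^3 = \sqrt{\det Q}/(2\pi)^3$, and substitution returns the stated explicit formula. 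The only delicate point is the algebraic simplification of the cross term via skew-symmetry of $A$ together with the accompanying Schur-complement identity; once these are in hand, the rest of the argument is a routine Gaussian computation.
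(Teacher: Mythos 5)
Your proof is correct and reaches every claim in the lemma, but at several points it takes a genuinely different (and arguably cleaner) route than the paper. After both proofs rewrite the exponent as $-\tfrac12 z^\tau P z - \tfrac12 p\cdot z + d$ with the same block matrix $P$ (the paper does this implicitly and you make the unit-Jacobian change $(x,v)\mapsto(x-tv,v)$ and the cancellation $v^\tau A v=0$ explicit, which is a welcome clarification), the arguments diverge. For $p=0$, the paper avoids assuming $P$ invertible by a symmetrization trick over the half-spaces $\{p\cdot z\gtrless 0\}$, showing an integral is simultaneously zero and $\le 0$; you instead first note that finiteness of $\Phi_1$ forces $P>0$, then read off $p$ from the Gaussian mean $-\tfrac12 P^{-1}p$ via $\Phi_2=\Phi_3=0$, which requires invertibility but is shorter. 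For $a,c,ac-b^2>0$, the paper substitutes $v=kx$ with $|x|=1$ to get a scalar quadratic in $k$, whereas you read the same inequalities off the $2\times 2$ principal submatrices $\bigl(\begin{smallmatrix} a & b\\ b & c\end{smallmatrix}\bigr)$ of $P$ (noting $A$ has zero diagonal) — both one-liners, yours more systematic. The biggest structural difference is your Schur-complement calculation $cI - a^{-1}(bI-A)(bI+A) = a^{-1}Q$: it simultaneously yields $P>0\iff Q>0$ and $\det P = a^3\det(a^{-1}Q)=\det Q$, while the paper treats these two facts separately, asserting the determinant identity as ``not difficult to check'' and establishing positivity of $Q$ by the special substitution $v=-(bx+Ax)/a$. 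Your Schur-complement route unifies those two steps and makes the determinant identity mechanical; it also clarifies why the positivity of $Q$ is exactly equivalent to that of $P$ once $a>0$. Both approaches are valid, but yours packages the linear-algebra claims more economically.
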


 \begin{proof}    If we set \begin{equation}\label{DefzP}
		z:= \begin{pmatrix} v \\ x \end{pmatrix} \in \mathbb{R}^6, \:\:\:\:\:\: P:=
		\begin{pmatrix} aI & bI + A \\ bI - A & cI \end{pmatrix}
	\end{equation}
	then it holds that
 $a|v|^2 + 2bv \cdot x + c|x|^2 + 2v^\tau Ax =  z^\tau Pz.$
	Since $\Phi_1(\mathcal{M}_2)=1$ and $\Phi_2(\mathcal{M}_2 )=\Phi_3(\mathcal{M}_2)=0$., we get that
	\begin{equation}\label{ProptypP}
		\int_{\R^6} e^{-\frac{1}{2}(z^\tau Pz + p \cdot z) + d}dz = 1, \:\:\:\:\:\: \int_{\R^6} ze^{-\frac{1}{2}(z^\tau Pz + p \cdot z) + d}dz = 0.
	\end{equation}

 First we prove $p=0$.  To see it, we introduce   integral region $I = \{z \in \R^6 : p \cdot z \ge 0\}$ and $J = \{z \in \R^6 : p \cdot z \le 0\}$. It is easy to check that $J = -I$ and moreover by \eqref{ProptypP},
\beno
		0 &=& \int_Ip \cdot ze^{-\frac{1}{2}(z^\tau Pz + p \cdot z) + d}dz + \int_Jp \cdot ze^{-\frac{1}{2}(z^\tau Pz + p \cdot z) + d}dz \\
		&=& \int_I p \cdot ze^{-\frac{1}{2}(z^\tau Pz + d)} (e^{-\frac{1}{2}p \cdot z}-e^{ \frac{1}{2}p \cdot z}) dz \le 0, \eeno
	which implies that $p \cdot z = 0$,  for a.e. $z \in \R^6$. Thus we have $p = 0$.

	By definition \eqref{DefzP}, $P$ is a symmetric matrix. Then there exists a orthogonal matrix $C$ and the diagonal matrix $D := \mathrm{diag}\{\lambda_1, \cdots, \lambda_6\}$ such that $P = C^\tau DC$. Use change of variable in \eqref{ProptypP}, then we  get that
	\begin{equation*}
		1 = \int_{\R^6} e^{-\frac{1}{2}z^\tau C^\tau DCz + d}dz = \int_{\R^6} e^{-\frac{1}{2}z^\tau Dz + d}dz=e^d\int_{\R^6} \prod_{i = 1}^6e^{-\frac{1}{2}\lambda_iz_i^2}dz.
	\end{equation*}
	This forces to have $\lambda_i>0$ for $1\le i\le 6$ which implies that  $P$ is positive definite. Besides, by the fact that $\int_{\R^3}e^{-\frac{1}{2}\lambda_iz_i^2}dz_i=\lambda_i^{-\f12}\sqrt{2\pi}$,  we have $e^d = \frac{\sqrt{\det P}}{(2\pi)^3}$.   Recall $z^\tau Pz = a|v|^2 + 2bv \cdot x + c|x|^2 + 2v^\tau Ax > 0$  and $P$ is positive definite. If we take $v = kx$ with $|x| = 1$ and use the fact that  $A$ is skew-symmetric,  then $v^\tau Ax = kx^\tau Ax = 0$  and $ak^2 + 2bk + c > 0$ for all $k \in \R$. Thus we have  $a, c, ac - b^2 > 0$.

	By definition of $P$, it is not difficult to check that $\det P = \det[(ac-b^2)I + A^2] = \det Q$. To show that $Q$ is also positive definite,  if $v = -(bx + Ax)/a$, then we have
	\begin{equation*}
	\begin{aligned}
		z^\tau Pz &= \frac{|bx + Ax|^2}{a} - \frac{2(bx + Ax) \cdot bx}{a} + c|x|^2 - \frac{2(bx + Ax)^\tau Ax}{a} \\
		&= \frac{1}{a}(b^2|x|^2 + 2bx \cdot Ax + |Ax|^2 - 2b^2|x|^2 - 2bx \cdot Ax + ac|x|^2 - 2|Ax|^2) \\
		&= \frac{1}{a}[(ac - b^2)|x|^2 + x^\tau A^2x] = \frac{x^\tau Qx}{a},
	\end{aligned}
	\end{equation*}
which implies our desired result.
\end{proof}

\subsection{Basic properties of non-cutoff collision operator} We list some basic tools for the non-cutoff equation. We begin with a lemma on the change of variables.

\begin{lemma}\label{chv} (\cite{ADVW}) We have  \\
(1) (Regular change of variables)
\[\int F(v', |v - v_*|, \theta)d\sigma dv = \int\frac{1}{\cos^3(\theta/2)}F(v, \frac{|v - v_*|}{\cos(\theta/2)}, \theta)d\sigma dv.
\]
(2) (Singular change of variables)
\[\int F(v', |v - v_*|, \theta)d\sigma dv_* = \int\frac{1}{\sin^3(\theta/2)}F(v_*, \frac{|v - v_*|}{\sin(\theta/2)}, \theta)d\sigma dv_*.
\]
\end{lemma}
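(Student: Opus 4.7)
These are the classical regular/singular changes of variables of Alexandre--Desvillettes--Villani--Wennberg; my plan is to prove both by combining a Cartesian Jacobian computation with a rescaling of the angular variable.

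For the regular identity (1), I will fix $v_*$ and first treat $\sigma$ as a parameter. Differentiating $v'=\tfrac{v+v_*}{2}+\tfrac{|v-v_*|}{2}\sigma$ gives $\partial v'/\partial v=\tfrac12 I+\tfrac12\sigma\hat n^\top$ with $\hat n=(v-v_*)/|v-v_*|$, and the matrix-determinant lemma yields $\det(\partial v'/\partial v)=(1+\cos\theta)/8=\cos^2(\theta/2)/4$, so that $dv=(4/\cos^2(\theta/2))\,dv'$ at fixed $\sigma$. Squaring $v'-v_*=\tfrac{|v-v_*|}{2}(\hat n+\sigma)$ produces the geometric identity $|v'-v_*|=|v-v_*|\cos(\theta/2)$, and the same computation shows that the angle $\theta^\sharp$ between $(v'-v_*)/|v'-v_*|$ and $\sigma$ satisfies $\theta=2\theta^\sharp$. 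Renaming $v'\to v$ therefore converts the LHS of (1) into $\int F(v,|v-v_*|/\cos\theta^\sharp,2\theta^\sharp)\,(4/\cos^2\theta^\sharp)\,dv\,d\sigma$, where $\theta^\sharp$ is now the standard deviation angle in the new variables. To put this in the form appearing on the RHS of (1), where the argument of $F$ is $\theta$ rather than $2\theta^\sharp$, I will perform the angular rescaling $\theta^\sharp\leftrightarrow\theta/2$ inside $d\sigma=\sin\vartheta\,d\vartheta\,d\varphi$, whose Jacobian works out to $1/(4\cos(\theta/2))$; combining $(4/\cos^2(\theta/2))\cdot(1/(4\cos(\theta/2)))=1/\cos^3(\theta/2)$ produces exactly the claimed weight.

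For the singular identity (2), the plan is the same with $v$ fixed and $(v_*,\sigma)$ free, using $v'-v=-\tfrac{|v-v_*|}{2}(\hat n-\sigma)$ in place of $v'-v_*=\tfrac{|v-v_*|}{2}(\hat n+\sigma)$. The analogous computation gives the geometric identity $|v'-v|=|v-v_*|\sin(\theta/2)$, a Cartesian Jacobian of $\sin^2(\theta/2)/4$, and after the parallel angular rescaling the weight $1/\sin^3(\theta/2)$. Note that $\sin^{-3}(\theta/2)$ is \emph{not} integrable against the angular kernel $b(\cos\theta)\sin\theta\sim\theta^{-1-2s}$ near $\theta=0$, hence the name ``singular'' and the need to pair (2) with a cancellation device such as a Taylor expansion of $f'-f$ (as is already done in Lemma~\ref{itgbp} and Lemma~\ref{Comm1}).

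The main obstacle is the careful simultaneous bookkeeping of the three coupled transformations: the Cartesian Jacobian (which alone yields only $\cos^{-2}(\theta/2)$), the geometric re-expression of $|v-v_*|$, and the one-dimensional angular rescaling that halves the deviation angle (contributing the final $\cos^{-1}(\theta/2)$). Skipping the last step gives the wrong exponent, which is why the $d\sigma$-reparametrization through $\theta\mapsto\theta/2$ has to be handled explicitly; the identical mechanism accounts for the cubic power in the singular case.
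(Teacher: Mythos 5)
The paper cites Lemma~\ref{chv} from \cite{ADVW} and supplies no proof of its own, so there is no in-paper argument to compare against; your proposal is therefore the only derivation on the table, and the question is simply whether it is correct and complete.

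For the regular identity (1) your argument checks out in every detail: $\partial v'/\partial v = \tfrac12 I + \tfrac12 \sigma\hat n^\top$ has determinant $\tfrac18(1+\cos\theta)=\tfrac14\cos^2(\theta/2)$ by the matrix determinant lemma; the geometric identities $|v'-v_*|=|v-v_*|\cos(\theta/2)$ and $\theta^\sharp=\theta/2$ (with $\theta^\sharp$ the new deviation angle) follow from $v'-v_*=\tfrac{|v-v_*|}{2}(\hat n+\sigma)$; and the angular re-parametrization $\theta^\sharp\mapsto\theta/2$ gives $\sin\theta^\sharp\,d\theta^\sharp=\tfrac12\sin(\theta/2)\,d\theta=\tfrac{1}{4\cos(\theta/2)}\sin\theta\,d\theta$, so that $\tfrac{4}{\cos^2(\theta/2)}\cdot\tfrac{1}{4\cos(\theta/2)}=\cos^{-3}(\theta/2)$. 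This is exactly the standard ADVW computation.

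The singular case (2) is correct in its final numbers but you gloss over the one step that is genuinely different, and ``the parallel angular rescaling'' is misleading. After the change $v_*\mapsto v'$ (and renaming $v'\to v_*$) the new deviation angle $\Theta$ --- the angle between $(v-v_*)/|v-v_*|$ and $\sigma$ --- satisfies
\begin{equation*}
\cos\Theta=\frac{(v-v')\cdot\sigma}{|v-v'|}=\frac{\cos\theta-1}{2\sin(\theta/2)}=-\sin(\theta/2),
\qquad\text{i.e.}\qquad \Theta=\frac{\pi}{2}+\frac{\theta}{2},
\end{equation*}
\emph{not} $\Theta=\theta/2$. The re-parametrization is thus the affine shift $\Theta=\pi/2+\theta/2$; it so happens that $d\Theta=\tfrac12\,d\theta$ and $\sin\Theta=\cos(\theta/2)$, so $\sin\Theta\,d\Theta=\tfrac12\cos(\theta/2)\,d\theta=\tfrac{1}{4\sin(\theta/2)}\sin\theta\,d\theta$, and multiplying by the Cartesian Jacobian $4/\sin^2(\theta/2)$ does yield the claimed weight $\sin^{-3}(\theta/2)$. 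You should make this explicit: as written, the reader is left to guess whether $\Theta$ halves (it does not --- grazing collisions $\theta\to 0$ are sent to $\Theta\to\pi/2$, which is precisely why this is the \emph{singular} change), and the fact that the Jacobian nevertheless comes out to $1/(4\sin(\theta/2))$ is a pleasant accident of the affine shift that deserves a sentence of its own.
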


\begin{lemma}\label{Cancellation} (Cancellation Lemma, \cite{ADVW} Lemma 1) For any smooth function $f$, it holds that
$\int_{\R^3 \times \mathbb{S}^{2}} B(v-v_*, \sigma) (f'-f) dv d\sigma= (f*S )(v_*),$
where
\[
S(z) = |\mathbb{S}^1| \int_{0}^{\frac \pi 2} \sin \theta \left[ \frac 1 {\cos^3(\theta/2)} B\left(\frac {|z|}  {\cos (\theta/2)} , \cos \theta \right) - B(|z|, cos \theta) \right].
\]
\end{lemma}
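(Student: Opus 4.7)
The plan is to reduce everything to the regular pre-post change of variables provided in Lemma \ref{chv}(1). The first move is simply to split the integrand and treat the gain-like term and loss-like term separately:
\[
\int_{\R^3\times\mathbb{S}^2} B(v-v_*,\sigma)(f'-f)\,d\sigma dv \;=\; \int_{\R^3\times\mathbb{S}^2} B(v-v_*,\sigma)f(v')\,d\sigma dv \;-\; \int_{\R^3\times\mathbb{S}^2} B(v-v_*,\sigma)f(v)\,d\sigma dv.
\]
For the first integral, I would freeze $v_*$ and apply Lemma \ref{chv}(1) with the integrand $F(v',|v-v_*|,\theta)=B(|v-v_*|,\cos\theta)f(v')$, using that $B(v-v_*,\sigma)=|v-v_*|^\gamma b(\cos\theta)$ depends only on $|v-v_*|$ and $\cos\theta$. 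This produces the Jacobian factor $\cos^{-3}(\theta/2)$ and replaces $|v-v_*|$ by $|v-v_*|/\cos(\theta/2)$ inside $B$, while replacing $f(v')$ by $f(v)$.

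After this substitution both pieces depend on the same variable $v$, and combining them pointwise in $v$ gives
\[
\int_{\R^3\times\mathbb{S}^2} B(v-v_*,\sigma)(f'-f)\,d\sigma dv \;=\; \int_{\R^3}f(v)\!\int_{\mathbb{S}^2}\!\bigl[\cos^{-3}(\theta/2) B(|v-v_*|/\cos(\theta/2),\cos\theta) - B(|v-v_*|,\cos\theta)\bigr]d\sigma\,dv.
\]
The inner $\sigma$-integral depends on $v,v_*$ only through $|v-v_*|$, so I would parametrize $\sigma$ by $(\theta,\varphi)\in[0,\pi/2]\times[0,2\pi)$ (the restriction $\theta\le\pi/2$ coming from assumption $\mathbf{(A3)}$), integrate out the azimuthal angle $\varphi$ to produce the factor $|\mathbb{S}^1|$, and use $d\sigma=\sin\theta\,d\theta\,d\varphi$. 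This exactly yields $S(v-v_*)$ as defined in the statement, and since $S$ is a radial function the outer $v$-integral is the convolution $(f\ast S)(v_*)$.

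The calculation is essentially one clean change of variables, so there is no real obstacle beyond verifying the Jacobian and the transformation rule $|v'-v_*|=\cos(\theta/2)|v-v_*|$ (which is where the factor $1/\cos(\theta/2)$ inside $B$ comes from). The only mild technical point I would check is that the truncation to $\theta\in[0,\pi/2]$ is compatible with the change of variables $v\mapsto v'$: the map is a diffeomorphism from $\R^3$ onto $\R^3$ for each fixed $(v_*,\sigma)$ in that range, so Lemma \ref{chv}(1) applies as stated. No regularity beyond smoothness of $f$ is needed, and the resulting kernel $S$ is integrable in a neighborhood of the origin because the bracket $\cos^{-3}(\theta/2)B(|z|/\cos(\theta/2),\cos\theta)-B(|z|,\cos\theta)$ vanishes to sufficient order at $\theta=0$ to offset the non-integrable singularity of $b(\cos\theta)\sin\theta\sim\theta^{-1-2s}$.
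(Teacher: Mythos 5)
The paper does not actually supply a proof of this lemma; it is cited verbatim from \cite{ADVW} (Lemma 1 there), so there is nothing to compare against line-by-line. Evaluating your argument on its own merits:

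Your overall plan (apply the regular pre--post change of variables of Lemma~\ref{chv}(1) to the $f(v')$ term, recombine with the $f(v)$ term, then integrate out the azimuthal angle to produce the factor $|\mathbb{S}^1|\sin\theta\,d\theta$) is exactly the standard route, and the way you feed $F(w,r,\theta)=B(r,\cos\theta)f(w)$ into Lemma~\ref{chv}(1) to get the Jacobian $\cos^{-3}(\theta/2)$ and the dilation $r\mapsto r/\cos(\theta/2)$ is correct. However, there is a genuine gap at the very first step. The splitting
\[
\int B(v-v_*,\sigma)(f'-f)\,d\sigma dv=\int B\,f'\,d\sigma dv-\int B\,f\,d\sigma dv
\]
is not legitimate for the non-cutoff kernel: since $b(\cos\theta)\sim\theta^{-1-2s}$ is not locally integrable at $\theta=0$, each of the two integrals on the right diverges, while only the combined integrand is controlled (via $|f'-f|\lesssim|v-v_*|\,\theta$). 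You cannot apply the change of variables to one of two infinite pieces and then subtract. The correct argument truncates to $\{\theta>\varepsilon\}$ first: on this set both pieces are finite and Lemma~\ref{chv}(1) applies to the gain-like one; recombining yields $\int f(v)\,[\cos^{-3}(\theta/2)B(|v-v_*|/\cos(\theta/2),\cos\theta)-B(|v-v_*|,\cos\theta)]\mathbf{1}_{\theta>\varepsilon}\,d\sigma dv$; and then one passes to the limit $\varepsilon\to0$ on both sides, using the Lipschitz bound on $f$ for the left side and the $O(\theta^2)$ cancellation of the bracket for the right side. You do observe that the bracket vanishes to sufficient order, but you present this only as a posteriori evidence that $S$ is integrable near the origin, whereas it is precisely what makes the $\varepsilon\to0$ limit work and hence justifies the manipulation you started with. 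That reordering is the missing (and central) idea of the Cancellation Lemma. The other point you flag, compatibility of the support restriction $\theta\le\pi/2$ with the diffeomorphism $v\mapsto v'$, is indeed harmless and is already built into Lemma~\ref{chv}(1) as stated.
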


\begin{lemma}[\cite{ADVW}]\label{lemsymetric} It hold that
\beno 
			 \int(v' - v)F(v', |v - v_*|, \theta)d\sigma dv = 0; \quad
  \int(v' - v)f(\theta)d\sigma = -(v - v_*)\int\sin^2\frac{\theta}{2}f(\theta)d\sigma.
		\eeno
\end{lemma}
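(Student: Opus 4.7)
The plan is to prove the two identities separately, using the spherical geometry of the binary collision rules.

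\textbf{Second identity.} This one is a direct computation. Decompose $\sigma$ with respect to the axis $k:=(v-v_*)/|v-v_*|$ as $\sigma = \cos\theta\, k + \sin\theta\, e$, where $e$ is a unit vector in the plane orthogonal to $k$, so that $d\sigma = \sin\theta\, d\theta\, de$. Since $v' - v = \tfrac12|v-v_*|(\sigma - k)$, elementary trigonometry (using $\cos\theta-1=-2\sin^2(\theta/2)$ and $\sin\theta=2\sin(\theta/2)\cos(\theta/2)$) yields
\[ v' - v = -|v-v_*|\sin^2(\theta/2)\, k + |v-v_*|\sin(\theta/2)\cos(\theta/2)\, e. \]
As $f(\theta)$ is independent of $e$, the $e$-contribution vanishes because $\int e\, de = 0$ on the orthogonal unit circle, and the $k$-contribution produces exactly $-(v-v_*)\int \sin^2(\theta/2) f(\theta)\, d\sigma$.

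\textbf{First identity.} Since $v_*$ is fixed (not integrated), the standard pre--post collisional change of variables is unavailable; I look instead for a hidden swap symmetry. First, shift $u=v-v_*$ so that $dv=du$, then pass to polar coordinates $u = r\hat u$ with $r\geq 0$, $\hat u\in \mathbb{S}^2$ and $du = r^2\, dr\, d\hat u$. The collision formulae then read
\[ v'=v_*+\tfrac{r}{2}(\hat u+\sigma),\quad |v-v_*|=r,\quad \theta=\angle(\hat u,\sigma),\quad v'-v=\tfrac{r}{2}(\sigma-\hat u). \]
Crucially, all three arguments of $F$ are invariant under the swap $\hat u\leftrightarrow \sigma$ (both $\hat u+\sigma$ and the angle between the two unit vectors are symmetric), while the prefactor $v'-v$ is \emph{antisymmetric} under that swap. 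Since the integration measure $d\hat u\, d\sigma$ on $\mathbb{S}^2\times \mathbb{S}^2$ is itself symmetric, exchanging $\hat u$ and $\sigma$ sends the integral to its negative, forcing it to vanish.

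\textbf{The main obstacle} is locating the right reparameterization for the first identity. With $v_*$ held fixed, none of the volume-preserving changes of variables (pre--post collisional, regular, singular of Lemma \ref{chv}) apply directly, and a naive approach (as for the second identity) leaves an integral that does not obviously cancel. The resolution is to promote $v$ to the pair $(r,\hat u)\in \mathbb{R}_+\times \mathbb{S}^2$ centered at $v_*$: this places $\hat u$ and $\sigma$ on the same footing, after which the identity reduces to a one-line antisymmetry argument. The second identity is a routine trigonometric integration by comparison.
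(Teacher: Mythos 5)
Your proof is correct, and the argument for the first identity — passing to polar coordinates $v=v_*+r\hat u$ and exploiting the antisymmetry of $v'-v=\tfrac r2(\sigma-\hat u)$ against the manifest $\hat u\leftrightarrow\sigma$ symmetry of $(v',|v-v_*|,\theta)$ and of the product measure on $\mathbb{S}^2\times\mathbb{S}^2$ — is the standard symmetrization used in \cite{ADVW}, to which the paper simply refers without reproducing a proof. The trigonometric computation for the second identity (projecting $\sigma-k$ onto $k$ and the orthogonal circle, using $\cos\theta-1=-2\sin^2(\theta/2)$ and $\int e\,de=0$) is likewise the routine route, so no gap to report.
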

Next we focus on some estimates on the collision operator.
	\begin{lemma}\label{HLB}
		[\cite{H}]. For $a, b \in [0, 2s], w_1, w_2 \in \mathbb{R}$ with $a + b = 2s, w_1 + w_2 = \gamma + 2s$. Then for smooth functions $g = g(v), h = h(v)$ and $f = f(v)$, we have \\
		(1)if $ \gamma + 2s > 0$
		\begin{equation*}
			|\langle \mathsf{Q}(g, h), f \rangle_v| \lesssim (|g|_{L_{\gamma + 2s + w}^1} + |g|_{L^2})|h|_{H_{w_1}^a}|f|_{H_{w_2}^b};
		\end{equation*}
		where $w = (-w_1)^+ + (-w_2)^+$ \\
		(2) if $\gamma + 2s = 0$
		\begin{equation*}
			|\langle \mathsf{Q}(g, h), f \rangle_v| \lesssim_\delta (|g|_{L_{w_3}^1} + |g|_{L^2})|h|_{H_{w_1}^a}|f|_{H_{w_2}^b};
		\end{equation*}
		where $\delta > 0, w_3 = \max\{\delta, w\}$ \\
		(3) if $-1 < \gamma + 2s < 0$
		\begin{equation*}
			|\langle \mathsf{Q}(g, h), f \rangle_v| \lesssim (|g|_{L_{w_4}^1} + |g|_{L_{|\gamma + 2s|}^2})|h|_{H_{w_1}^a}|f|_{H_{w_2}^b},
		\end{equation*}
		where $w_4 = \max\{|\gamma + 2s|, \gamma + 2s + w\}$ \\
	\end{lemma}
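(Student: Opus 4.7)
My plan is to combine a dyadic angular decomposition with a regular/singular kinetic split, following the standard approach for non-cutoff bilinear estimates. First, I would decompose the angular factor as $b(\cos\theta) = \sum_{k\ge 0} b_k(\cos\theta)$ with $b_k$ supported on $\theta \sim 2^{-k}$ and of size $2^{k(1+2s)}$ on its support. The piece $b_0$ is then handled by Lemma \ref{Cancellation} and crude H\"older bounds, while the tail $\sum_{k\ge 1} b_k$ is the source of the fractional regularity loss of order $2s$. In parallel I would split $|v-v_*|^\gamma$ according to whether $|v-v_*| \le 1$ or $|v-v_*| > 1$, in order to isolate the kinetic singularity from the large-velocity tail.

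For each dyadic piece $\mathsf{Q}_k(g,h) := \int g'_*(h'-h) |v-v_*|^\gamma b_k(\cos\theta)\,d\sigma\,dv_*$, the key step is either (a) to invoke Bobylev's identity, which writes the Fourier transform as $\int \hat{B}_k(|\xi|,\sigma)\hat{g}(\xi^-)\hat{h}(\xi^+)\,d\sigma$ and, after summation in $k$, produces a factor $\langle D_v\rangle^{2s}$ acting on $h$; or (b) to Taylor-expand $h'-h = (v'-v)\cdot\nabla h + O(|v'-v|^2)$ and exploit the geometric identity $|v'-v|\sim \theta|v-v_*|$ together with the regular/singular change of variables in Lemma \ref{chv}. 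Pairing with $f$ and applying Cauchy--Schwarz then distributes the $2s$ derivatives between $h$ and $f$ via interpolation, which is exactly the freedom $a+b=2s$ in the statement. The weight split $w_1+w_2 = \gamma+2s$ comes from commuting $\langle v\rangle^{w_i}$ through the operator via $\langle v'\rangle \lesssim \langle v\rangle + \langle v_*\rangle$, which transfers the surplus moment $\langle v_*\rangle^{\gamma+2s}$ onto $g_*$.

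The case distinction on the sign of $\gamma+2s$ then reflects purely the integrability of $|v-v_*|^\gamma$ against $g_*$ once the $2s$ regularity has been absorbed by $h$ and $f$. When $\gamma+2s>0$, the leftover kinetic factor is dominated by $\langle v-v_*\rangle^{\gamma+2s}$ and only an $L^1$ moment of $g$ is required, giving case (1). When $\gamma+2s=0$, a logarithmic borderline forces an $\varepsilon$-loss in the moment, which explains the $\delta>0$ in case (2). When $-1<\gamma+2s<0$, the remaining singularity $|v-v_*|^{\gamma+2s}$ is genuine and must be absorbed via a Hardy--Littlewood--Sobolev-type inequality, producing the $L^2_{|\gamma+2s|}$ norm on $g$ in case (3). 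I expect the hardest part to be obtaining the \emph{sharp} $L^1$-moment exponents $w = (-w_1)^+ + (-w_2)^+$ (and its analogues $w_3,w_4$): this requires carefully distinguishing the positive and negative parts of $w_1,w_2$ when pushing weights through the bilinear operator, rather than settling for the naive $|w_1|+|w_2|$ bound that a direct commutator computation would yield.
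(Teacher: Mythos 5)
The paper itself does not prove Lemma~\ref{HLB}; it is imported verbatim from the reference~\cite{H} (``Sharp bounds for Boltzmann and Landau collision operators''), so there is no in-paper argument to compare your proposal against. What you have written is a plausible high-level road map that is broadly consistent with the proof strategy in~\cite{H} and in the wider non-cutoff literature (dyadic angular decomposition, Bobylev's identity or Taylor expansion, regular/singular kinetic split, Cancellation Lemma for the $\mathsf{Q}_2$-type piece, and a case analysis governed by the sign of $\gamma+2s$).

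That said, you should treat your submission as an outline rather than a proof. The very content of the lemma is the \emph{sharpness} of the weight and moment exponents --- the freedom $a+b=2s$ with any nonnegative split, and in particular the exponent $w=(-w_1)^++(-w_2)^+$ rather than the naive $|w_1|+|w_2|$ --- and you explicitly flag that you have not carried out the step that produces it. That step is not a refinement of bookkeeping one can hand-wave past: it requires a careful geometric/phase-space decomposition (the reference uses a combined frequency--velocity dyadic decomposition to track how much of $\langle v\rangle$ lands on $v_*$, $v$, or $v'$), and without it you will only recover a lossy version of the estimate. Two smaller points: first, for case (3) the bound $|g|_{L^2_{|\gamma+2s|}}$ comes from a Cauchy--Schwarz argument exploiting that $|v-v_*|^{2(\gamma+2s)}$ is locally integrable when $\gamma+2s>-3/2$ (the stated range $\gamma+2s>-1$ is even more restrictive), together with the $\langle v_*\rangle$-moment picked up on the far region; calling it a Hardy--Littlewood--Sobolev step is roughly the right idea but not the mechanism that produces a weighted $L^2$ norm. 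Second, your Bobylev route gives the \emph{Fourier-side} version most naturally, and converting the resulting symbol estimate into the weighted Sobolev inequality with the advertised $w_1,w_2,w$ is precisely where the work is concentrated, so option (b) (Taylor expansion plus the change-of-variables Lemma~\ref{chv}) is probably the more tractable path to the stated form.
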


	\begin{corollary}\label{HLB1}
		Let $a, b, w_1, w_2 $ satisfy the conditions  in lemma \ref{HLB} with $w_1w_2 \ge 0$. There holds
		\begin{equation*}
			|\langle \mathsf{Q}(g, h), f \rangle_v| \lesssim |g|_{L_4^2}|h|_{H_{w_1}^a}|f|_{H_{w_2}^b}.
		\end{equation*}
	\end{corollary}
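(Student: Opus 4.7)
The plan is to apply Lemma \ref{HLB} in each of its three regimes and then reduce every resulting weighted norm of $g$ on the right-hand side to $|g|_{L^2_4}$ via the crude Cauchy--Schwarz embedding $|g|_{L^1_\ell} \lesssim |g|_{L^2_{\ell + 2}}$.

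First I would exploit the sign hypothesis $w_1 w_2 \ge 0$ to simplify the auxiliary weight $w := (-w_1)^+ + (-w_2)^+$ appearing in Lemma \ref{HLB}(1). Since $w_1 + w_2 = \gamma + 2s$ and $w_1, w_2$ share a sign (or one of them vanishes), either both are nonnegative, in which case $w = 0$, or both are nonpositive, in which case $w = -(w_1+w_2) = |\gamma + 2s|$. In particular $w \le |\gamma + 2s| < 2$ throughout.

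Next I would handle the three regimes of Lemma \ref{HLB} in turn. If $\gamma + 2s > 0$ then $w = 0$, so the weighted norm appearing on the $g$-factor is $|g|_{L^1_{\gamma + 2s}} + |g|_{L^2}$, and it is controlled by $|g|_{L^2_{\gamma + 2s + 2}} + |g|_{L^2} \lesssim |g|_{L^2_4}$ since $\gamma < 0$ and $s < 1$. If $\gamma + 2s = 0$ then the sign hypothesis forces $w_1 = w_2 = 0$, hence $w = 0$ and we may choose $\delta$ small so that $|g|_{L^1_\delta} + |g|_{L^2}$ is again majorized by $|g|_{L^2_4}$. Finally, if $-1 < \gamma + 2s < 0$ then $w = |\gamma + 2s|$, hence $w_4 = \max\{|\gamma+2s|,\, \gamma + 2s + w\} = |\gamma + 2s| < 1$, and both $|g|_{L^1_{w_4}}$ and $|g|_{L^2_{|\gamma+2s|}}$ are dominated by $|g|_{L^2_4}$.

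The main concern is simply a careful case analysis rather than any real obstacle; in effect the corollary is a uniform repackaging of the trilinear estimates of Lemma \ref{HLB} under the sign assumption $w_1 w_2 \ge 0$, which kills the $L^1$ ingredient in favor of the single norm $|g|_{L^2_4}$.
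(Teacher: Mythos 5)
The paper states Corollary \ref{HLB1} immediately after Lemma \ref{HLB} without supplying a proof, so there is no written argument to compare against; nevertheless, your proposal is correct and is the natural verification.

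Your case analysis is complete. The sign hypothesis $w_1w_2\ge 0$ indeed forces $w=(-w_1)^++(-w_2)^+=0$ when $\gamma+2s\ge 0$ (both $w_1,w_2$ must be nonnegative since their sum is, and they cannot have opposite signs), and forces $w=-(w_1+w_2)=|\gamma+2s|$ when $-1<\gamma+2s<0$ (both must be nonpositive). The Cauchy--Schwarz step $|g|_{L^1_\ell}\lesssim |g|_{L^2_{\ell+2}}$ is valid in $\R^3$ because $\int\lr{v}^{-4}dv<\infty$, and in each case the weight $\ell$ on the $L^1$ norm is at most $2$ (in case (1), $\gamma+2s<2$ since $\gamma<0$, $s<1$; in case (3), $w_4=|\gamma+2s|<1$ by the hypothesis $\gamma+2s>-1$ of Lemma \ref{HLB}(3); in case (2), choose $\delta\le 2$). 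Hence all $g$-norms appearing are dominated by $|g|_{L^2_4}$. One small remark: the middle case $\gamma+2s=0$ with $w_1w_2\ge 0$ forces $w_1=w_2=0$ (as you note), so the $\delta$ of Lemma \ref{HLB}(2) can be taken as any fixed small value, e.g.\ $\delta=1$, and the conclusion follows without further adjustment.
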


	\begin{lemma}\label{coerupper+}
		It holds that
		\begin{equation*}
			\int g_*f^2|v - v_*|^\gamma dvdv_* \lesssim |g|_{L_{|\gamma|}^1}|f|_{H_{\gamma/2}^{|\gamma|/2}}^2,
		\end{equation*}
		from this together with \eqref{coerandQ} and Corollary \ref{HLB1}, we get that $D_g(f) \lesssim |g|_{L_4^2}(|f|_{H_{r/2}^s}^2 + |f|_{H^1}^2).$
	\end{lemma}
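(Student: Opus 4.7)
The plan is to prove the main inequality by splitting the $v_*$-integration into the ``far'' region $\{|v - v_*| > 1\}$, where the kernel $|v-v_*|^\gamma$ is pointwise bounded by $1$, and the ``singular'' region $\{|v - v_*| \le 1\}$, where it is locally integrable but unbounded. On the far region I would use the elementary estimate $|v-v_*|^\gamma \lesssim \langle v\rangle^\gamma \langle v_*\rangle^{|\gamma|}$, which (for $\gamma<0$) is a direct consequence of Peetre's inequality $\langle v\rangle \lesssim \langle v-v_*\rangle \langle v_*\rangle$. This separates the variables and immediately produces $|g|_{L^1_{|\gamma|}} |\langle\cdot\rangle^{\gamma/2} f|_{L^2}^2 \le |g|_{L^1_{|\gamma|}}|f|^2_{H^{|\gamma|/2}_{\gamma/2}}$.

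For the singular region the key observation is that $\langle v\rangle \sim \langle v_*\rangle$ whenever $|v-v_*|\le 1$, so the $v_*$-weight can be transferred onto the $f$-factor: introducing $h(v) := \langle v\rangle^{\gamma/2} f(v)$, the contribution is controlled by
\[
\int g(v_*) \langle v_*\rangle^{|\gamma|} \bigg(\int h(v)^2 \, |v-v_*|^{-|\gamma|}\,dv\bigg)\,dv_*,
\]
after dropping the indicator (which only shrinks a nonnegative integrand). The inner $v$-integral is then bounded \emph{uniformly in $v_*$} by Hardy's fractional inequality on $\mathbb{R}^3$, $\int |h(v)|^2 |v-v_*|^{-2\alpha}\,dv \lesssim \||D_v|^\alpha h\|_{L^2}^2$ for $0<\alpha<3/2$, applied with $\alpha = |\gamma|/2$ (permitted since $|\gamma|<3$). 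The translation invariance of the homogeneous Sobolev norm is precisely what yields a bound independent of $v_*$; this is the heart of the argument and, if anywhere, the main technical obstacle (one must verify that the Hardy exponent $|\gamma|/2$ stays in the admissible range $(0, 3/2)$ for all $\gamma\in(-3, 0)$, which it does). The outer integration in $v_*$ then produces the factor $|g|_{L^1_{|\gamma|}}$, giving the advertised estimate.

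For the consequence on $D_g(f)$, I would rewrite $D_g(f)$ via \eqref{coerandQ} as $-\langle \mathsf{Q}(g,f), f\rangle_v + \tfrac12 \int g_*[(f')^2 - f^2] B\,d\sigma dv_* dv$. The first piece is handled directly by Corollary~\ref{HLB1} with $a=b=s$ and $w_1=w_2=\mathfrak{r}/2$, yielding $|g|_{L^2_4}|f|^2_{H^s_{\mathfrak{r}/2}}$. The second piece, by the Cancellation Lemma~\ref{Cancellation}, reduces (up to an explicit angular constant $C_\gamma$) to $\int g_* f^2 |v-v_*|^\gamma\,dv dv_*$, to which the inequality just proved applies. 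Finally Cauchy--Schwarz gives $|g|_{L^1_{|\gamma|}} \lesssim |g|_{L^2_4}$ (the weight $\langle v\rangle^{2|\gamma|-8}$ being integrable in the admissible parameter range), while $\langle v\rangle^{\gamma/2}\le 1$ together with the embedding $H^1\hookrightarrow H^{|\gamma|/2}$ (which holds when $|\gamma|\le 2$, i.e. in the range $\gamma>-2$ actually used in the sequel) upgrades $|f|^2_{H^{|\gamma|/2}_{\gamma/2}}$ to $|f|_{H^1}^2$, completing the argument.
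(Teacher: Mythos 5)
The paper states Lemma~\ref{coerupper+} without giving a proof, so there is nothing to compare against directly; I assess your proposal on its own.

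Your argument for the main inequality is correct and is the standard route for such singular-kernel convolution bounds. On $\{|v-v_*|>1\}$, Peetre's inequality in the form $\langle v-v_*\rangle^{\gamma}\lesssim \langle v\rangle^\gamma\langle v_*\rangle^{|\gamma|}$ separates variables cleanly. On $\{|v-v_*|\le1\}$, the local equivalence $\langle v\rangle\sim\langle v_*\rangle$ lets you shift the weight onto $g_*$, and the fractional Hardy inequality $\int |h|^2|v-v_*|^{-|\gamma|}dv\lesssim \||D_v|^{|\gamma|/2}h\|_{L^2}^2$, valid precisely because $|\gamma|/2<3/2$ for $\gamma\in(-3,0)$, gives a bound uniform in $v_*$ by translation invariance. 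Your derivation of the consequence via \eqref{coerandQ}, the Cancellation Lemma (whose kernel $S(z)$ is indeed $\sim|z|^\gamma$ because the factor $\cos^{-3-\gamma}(\theta/2)-1=O(\theta^2)$ tames the $\theta^{-1-2s}$ singularity of $b$, so the angular integral converges), and Corollary~\ref{HLB1} with $a=b=s$ and $w_1=w_2$ is exactly what the lemma's wording points to, and you correctly flag the hidden restrictions: $|g|_{L^1_{|\gamma|}}\lesssim |g|_{L^2_4}$ needs $\gamma>-5/2$, and the embedding into $|f|_{H^1}^2$ needs $\gamma\ge-2$ — both satisfied in the paper's working range $\gamma\in(-2,0)$.

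One point you pass over silently: after the Hardy step you land on $|\langle\cdot\rangle^{\gamma/2}f|_{H^{|\gamma|/2}}$ (derivative applied after multiplying by the weight), whereas the target norm $|f|_{H^{|\gamma|/2}_{\gamma/2}}$ by the paper's convention \eqref{not-mix-x-v-norm-energy} is $|\langle\cdot\rangle^{\gamma/2}\langle D_v\rangle^{|\gamma|/2}f|_{L^2}$ (weight applied after the derivative). The equivalence of these two weighted Sobolev norms — i.e. commuting $\langle\cdot\rangle^{\gamma/2}$ past $\langle D_v\rangle^{|\gamma|/2}$ — is a standard but not entirely trivial fact (the commutator is of lower order in both weight and regularity), and it deserves at least a sentence, especially as $|\gamma|/2$ can approach $3/2$.
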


	\begin{lemma}\label{v'bound}
	 The following inequality  holds: $|v - v_*| \lesssim \langle v_* \rangle\langle v' \rangle$.
	\end{lemma}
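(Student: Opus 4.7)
The plan is to exploit the symmetric $\sigma$-parametrization of the post-collisional velocities together with the angular restriction $\theta\in[0,\pi/2]$ guaranteed by assumption (A3). The key geometric observation is that, writing $m:=(v+v_*)/2$, one has $v-m=(v-v_*)/2$ and $v'-m=(|v-v_*|/2)\sigma$, so a direct computation gives
\[
|v'-v_*|^2=\Bigl|\frac{v-v_*}{2}+\frac{|v-v_*|}{2}\sigma\Bigr|^2=\frac{|v-v_*|^2}{2}\Bigl(1+\frac{v-v_*}{|v-v_*|}\cdot\sigma\Bigr)=|v-v_*|^2\cos^2(\theta/2).
\]

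Because assumption (A3) restricts the support of $B$ to $\theta\in[0,\pi/2]$, we have $\cos(\theta/2)\ge 1/\sqrt{2}$, so the above identity yields $|v-v_*|\le\sqrt{2}\,|v'-v_*|$. Then the triangle inequality and the elementary bound $|v'|+|v_*|\le 2\langle v'\rangle\langle v_*\rangle$ give
\[
|v-v_*|\le\sqrt{2}\,|v'-v_*|\le\sqrt{2}\bigl(|v'|+|v_*|\bigr)\lesssim \langle v'\rangle\langle v_*\rangle,
\]
which is the claimed inequality.

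There is no real obstacle here: the lemma is a purely kinematic consequence of the symmetric parametrization combined with the $\theta\le\pi/2$ truncation. It is worth emphasising, however, that (A3) is essential — without it, letting $\theta\to\pi$ forces $v'\to v_*$ while $|v-v_*|$ stays bounded away from zero, so $|v-v_*|$ cannot in general be controlled by $\langle v_*\rangle\langle v'\rangle$ alone.
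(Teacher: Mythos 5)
Your proof is correct, and it is actually cleaner than the paper's. The paper splits into two cases: if $|v| < 3|v_*|$ it bounds $|v - v_*| \le 4|v_*|$ directly; if $|v| \ge 3|v_*|$ it expands $|v'|^2$ via $v' = \frac{v+v_*}{2} + \frac{|v-v_*|}{2}\sigma$, drops the term $\frac{\cos\theta}{2}|v-v_*|^2$ (this is where $(\mathbf{A3})$ enters, since $\cos\theta\ge 0$), and bounds the remaining cross term to conclude $|v'|^2 \ge \frac{1}{18}|v|^2$, hence $|v-v_*| \lesssim |v| \lesssim |v'|$. Your route bypasses the case split entirely: the identity $|v'-v_*| = |v-v_*|\cos(\theta/2)$ (a standard kinematic fact, easily verified from the $\sigma$-parametrization) combined with $\cos(\theta/2)\ge 1/\sqrt 2$ under $(\mathbf{A3})$ gives $|v-v_*|\le \sqrt 2\,|v'-v_*|$ in one line, and the triangle inequality finishes. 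Both arguments rely on $(\mathbf{A3})$ in an essential way, but your use of it is more transparent, and your closing remark correctly identifies why the lemma fails without the angular truncation. The only trade-off is that the paper's argument, being cruder, would survive under mildly weaker assumptions on the support of $b$ (anything keeping $\cos\theta$ bounded below), whereas your sharp constant $\sqrt 2$ is specific to the $\theta\le\pi/2$ cutoff; this distinction is immaterial here.
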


	\begin{proof}
		If $|v| < 3|v_*|$, then $|v - v_*| \le 4|v_*| \le 4\langle v_* \rangle\langle v' \rangle$. If $|v| \ge 3|v_*|$, then $|v - v_*| \le 4|v|/3$. We get that $|v'| \gtrsim |v|$ since $v' = \frac{v + v_*}{2} + \frac{|v - v_*|}{2}\sigma$ and
		\begin{equation*}
		\begin{aligned}
			|v'|^2 &= \frac{|v|^2 + |v_*|^2}{2} + \frac{(v + v_*) \cdot \sigma}{2}|v - v_*| = \frac{|v|^2 + |v_*|^2}{2} + \frac{\cos\theta}{2}|v - v_*|^2 + v_* \cdot \sigma|v - v_*| \\
			&\ge \frac{|v|^2}{2}- |v_*||v - v_*| \ge \frac{1}{18}|v|^2 + \frac{4}{9}|v|^2- \frac{4}{3}|v_*||v| \ge \frac{1}{18}|v|^2.
		\end{aligned}
		\end{equation*}
		We end the proof.
	\end{proof}

	\begin{lemma}\label{kernel}
		Let  $\mathcal{I}_1(g, h, f) := \int g_*hf\theta^2b(\cos\theta)d\sigma dvdv_*, \mathcal{I}_2(g, h, f) := \int g_*hf'\theta^2b(\cos\theta)d\sigma dvdv_*$. If $s < 1/2$, $\mathcal{I}_3(g, h, f) := \int g_*hf'\theta b(\cos\theta)d\sigma dvdv_*.$ For $1\le k\le 3$, it holds that
		\begin{equation*}
			|\mathcal{I}_k(g, h, f)| \lesssim |g|_{L^1}|h|_{L^2}|f|_{L^2}.
		\end{equation*}
	\end{lemma}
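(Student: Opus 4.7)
The plan is to reduce all three estimates to two ingredients: (i) finiteness of the angular integrals $\int_{\mathbb{S}^2}\theta^\ell b(\cos\theta)\,d\sigma$ for appropriate $\ell$, which comes directly from assumption (A1), and (ii) the regular change of variables $v\mapsto v'$ supplied by Lemma \ref{chv}. More precisely, using $\sin\theta\,b(\cos\theta)\lesssim \theta^{-1-2s}$ together with polar coordinates on the hemisphere $\theta\in[0,\pi/2]$ (permitted by (A3)), one has $\int_{\mathbb{S}^2}\theta^2 b(\cos\theta)\,d\sigma\lesssim\int_0^{\pi/2}\theta^{1-2s}\,d\theta<\infty$ for every $s<1$, and $\int_{\mathbb{S}^2}\theta\, b(\cos\theta)\,d\sigma\lesssim\int_0^{\pi/2}\theta^{-2s}\,d\theta<\infty$ precisely when $s<1/2$. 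This is the structural reason why $\mathcal{I}_3$ requires the extra hypothesis on $s$.

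For $\mathcal{I}_1$, the argument is immediate: integrating $d\sigma$ first yields a uniform constant, and Cauchy--Schwarz in $v$ against the $L^1$ weight $|g_*|$ gives
\[
|\mathcal{I}_1(g,h,f)|\lesssim \int |g_*|\,|h(v)|\,|f(v)|\,dv\,dv_*\lesssim |g|_{L^1}|h|_{L^2}|f|_{L^2}.
\]
For $\mathcal{I}_k$ with $k\in\{2,3\}$, I would apply Cauchy--Schwarz on the full measure $d\sigma\,dv\,dv_*$ against the nonnegative weight $|g_*|\theta^{\ell}b(\cos\theta)$, where $\ell=2$ for $\mathcal{I}_2$ and $\ell=1$ for $\mathcal{I}_3$, splitting the product $h(v)f(v')$:
\[
|\mathcal{I}_k|^2\le\Bigl(\int|g_*|h(v)^2\theta^{\ell}b(\cos\theta)\,d\sigma dv dv_*\Bigr)\Bigl(\int|g_*|f(v')^2\theta^{\ell}b(\cos\theta)\,d\sigma dv dv_*\Bigr).
\]
The first factor is controlled exactly as in $\mathcal{I}_1$ by $|g|_{L^1}|h|_{L^2}^2$. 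For the second factor, at fixed $(v_*,\sigma)$ I would perform the regular change of variables $v\mapsto v'$ from Lemma \ref{chv}; since the integrand depends on $|v-v_*|$ only through $\theta$, the transformation only produces the Jacobian $\cos^{-3}(\theta/2)$, which by (A3) is uniformly bounded on $\theta\in[0,\pi/2]$. Thus the second factor is likewise bounded by $|g|_{L^1}|f|_{L^2}^2$, and combining the two yields the claim.

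The main (and essentially only) obstacle is calibrating when the angular moment is finite: for $\ell=2$ no restriction beyond $s<1$ is needed, whereas for $\ell=1$ one is forced into $s<1/2$, matching the hypothesis in the statement. Everything else is routine Cauchy--Schwarz plus the bounded Jacobian from the regular change of variables; no weights $|v-v_*|^\gamma$ appear in the integrands, so no additional velocity weights are required on $g$, $h$, or $f$.
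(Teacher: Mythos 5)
Your proof is correct and follows essentially the same route as the paper's: Cauchy--Schwarz with the weight $|g_*|\theta^\ell b(\cos\theta)$, plus the regular change of variables $v\mapsto v'$ from Lemma~\ref{chv} and the convergence of the angular integrals \eqref{kernel1} and \eqref{kernel2}. One small wording slip: you say ``the integrand depends on $|v-v_*|$ only through $\theta$'' — in fact the integrand $f(v')^2\theta^\ell b(\cos\theta)$ has \emph{no} $|v-v_*|$-dependence at all (and $\theta$ is an independent variable, not a function of $|v-v_*|$), which is precisely why the rescaling $|v-v_*|\mapsto |v-v_*|/\cos(\theta/2)$ in Lemma~\ref{chv} is harmless here; the argument is unaffected.
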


	\begin{proof} We only give a detailed proof to $\mathcal{I}_2$ and $\mathcal{I}_3$. We first observe that
		\begin{equation}\label{kernel1}
			\int\theta^2b(\cos\theta)d\sigma \sim \int_0^{\pi/2}\theta^{1 - 2s}d\theta \sim 1.
		\end{equation}
	By Cauchy-Schwartz inequality and Lemma \ref{chv}, we have 
		\begin{equation*}
			|\mathcal{I}_2(g, h, f)| \lesssim \big(\int|g_*||h|^2dvdv_*\big)^{1/2} \times \big(\int|g_*||f|^2dvdv_*\big)^{1/2} \le |g|_{L^1}|h|_{L^2}|f|_{L^2}.
		\end{equation*}
  Note that if $s < 1/2$, then
		\begin{equation}\label{kernel2}
			\int\theta b(\cos\theta)d\sigma \sim \int_0^{\pi/2}\theta^{-2s}d\theta \sim 1.
		\end{equation}
	Again by Cauchy-Schwartz inequality and Lemma \ref{chv}, we can get the desired result. This ends
		 the proof.
	\end{proof}

	\begin{lemma}\label{singular} Let $\mathcal{J}_1(g, h, f) := \int g_*hf|v - v_*|^{\gamma + 2}\theta^2b(\cos\theta)1_{|v' - v| \le 1}d\sigma dvdv_*$, $\mathcal{J}_2(g, h, f) := \int g_*hf'|v - v_*|^{\gamma + 2}\theta^2b(\cos\theta)1_{|v' - v| \le 1}d\sigma dvdv_*$ and $\mathcal{J}_3(g, h, f) := \int g_*h'f'|v - v_*|^{\gamma + 2}\theta^2b(\cos\theta)1_{|v' - v| \le 1}d\sigma dvdv_*.$ For $1\le k\le 3$, it holds that
		\begin{equation*}
			|\mathcal{J}_k(g, h, f)| \lesssim |g|_{L_\mathfrak{r}^1}|h|_{L_{\omega_1}^2}|f|_{L_{\omega_2}^2}, \:\:\: \omega_1, \omega_2 \ge 0, \omega_1 + \omega_2 = \mathfrak{r}.
		\end{equation*}
	\end{lemma}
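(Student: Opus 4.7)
The plan is to reduce all three integrals to a pure convolution kernel in $v-v_*$ by integrating out $\sigma$ first, and then to split into the regimes $|v-v_*|\le 1$ and $|v-v_*|>1$. Using $|v'-v|=|v-v_*|\sin(\theta/2)$, the indicator $1_{|v'-v|\le 1}$ reduces to a function of $(|v-v_*|,\theta)$; combined with $\sin\theta\,b(\cos\theta)\sim\theta^{-1-2s}$ this yields
\[\int \theta^2 b(\cos\theta)\,1_{|v'-v|\le 1}\,d\sigma \lesssim \min\bigl\{1,\,|v-v_*|^{-2+2s}\bigr\},\]
so the effective kinetic kernel is controlled by $|v-v_*|^{\gamma+2}1_{|v-v_*|\le 1}+|v-v_*|^{\gamma+2s}1_{|v-v_*|>1}$.

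For $\mathcal{J}_1$, I split along this dichotomy. In the far region $|v-v_*|>1$, I use $|v-v_*|^{\gamma+2s}\le|v-v_*|^{\mathfrak{r}}\lesssim\langle v\rangle^{\mathfrak{r}}+\langle v_*\rangle^{\mathfrak{r}}$ and distribute $\langle v\rangle^{\mathfrak{r}}=\langle v\rangle^{\omega_1}\langle v\rangle^{\omega_2}$ across $h$ and $f$, producing $|g|_{L^1_\mathfrak{r}}|h|_{L^2_{\omega_1}}|f|_{L^2_{\omega_2}}$ after Cauchy--Schwarz in $v$. In the near region $|v-v_*|\le 1$, the equivalence $\langle v\rangle\sim\langle v_*\rangle$ lets me transport the weights between $v$ and $v_*$ at no cost, and Cauchy--Schwarz in $(v,v_*)$ together with Young's convolution inequality applied to $g*K_0$, $K_0(y)=|y|^{\gamma+2}1_{|y|\le 1}$, delivers the missing factor $|g|_{L^1}\le |g|_{L^1_\mathfrak{r}}$ (using that $\|K_0\|_{L^\infty}\lesssim 1$ since $\gamma+2>0$, and that $K_0\in L^1$).

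For $\mathcal{J}_3$, the integrand factors as $g_*\,(hf)(v')\,\Phi(|v-v_*|,\theta)$. I apply the regular change of variables $v\mapsto v'$ from Lemma \ref{chv}(1); since $\theta\in[0,\pi/2]$ the Jacobian $\cos^{-3}(\theta/2)$ is bounded, the scaling $|v-v_*|\mapsto|v-v_*|/\cos(\theta/2)$ only affects constants, and the transformed indicator retains the same qualitative form. This reduces $\mathcal{J}_3$ to an integral of $\mathcal{J}_1$-type, and the $\mathcal{J}_1$ estimate applies verbatim. For $\mathcal{J}_2$, I use the factorisation $hf'=\bigl(h\langle v\rangle^{\omega_1}\bigr)\bigl(f\langle v'\rangle^{\omega_2}\bigr)\langle v\rangle^{-\omega_1}\langle v'\rangle^{-\omega_2}$ together with $\langle v\rangle\sim\langle v'\rangle$ on $\{|v'-v|\le 1\}$, so that the combined weight reads $\langle v\rangle^{-\mathfrak{r}}$. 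Cauchy--Schwarz in the measure $g_*\,d\sigma\,dv\,dv_*$ then splits matters into a $\mathcal{J}_1$-type integral for $H:=h\langle\cdot\rangle^{\omega_1}$ and a $\mathcal{J}_3$-type integral for $F:=f\langle\cdot\rangle^{\omega_2}$, both equipped with the regulariser $\langle v\rangle^{-\mathfrak{r}}$ which precisely absorbs the far-field growth of the effective kernel, and the product of their square roots yields $|g|_{L^1_\mathfrak{r}}|h|_{L^2_{\omega_1}}|f|_{L^2_{\omega_2}}$.

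The main obstacle is the simultaneous bookkeeping of the near-diagonal singularity $|v-v_*|^{\gamma+2}$, which can only be controlled via integrable convolution kernels (and not by polynomial weights), and the far-field growth $|v-v_*|^{\mathfrak{r}}$, which can only be controlled by distributing polynomial weights with $\omega_1+\omega_2=\mathfrak{r}$. The two mechanisms must be reconciled in the Cauchy--Schwarz step for $\mathcal{J}_2$: the weight split must be inserted in such a way that the regulariser $\langle v\rangle^{-\mathfrak{r}}$ precisely matches the far-field growth, while in the near region the equivalence $\langle v\rangle\sim\langle v_*\rangle$ renders the same weight harmless; careful execution is required to ensure that no moment on $g$ beyond $\mathfrak{r}$ is forced.
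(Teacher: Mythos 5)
Your proof is correct. For $\mathcal{J}_1$ and $\mathcal{J}_3$ you follow essentially the same route as the paper: integrate the angular variable against the truncated kernel to obtain $\int\theta^2 b(\cos\theta)1_{|v'-v|\le 1}d\sigma\lesssim\min\{1,|v-v_*|^{2s-2}\}$ (this combines the paper's \eqref{kernel1} and \eqref{singular1} into a single effective kernel), split into near and far regions of $|v-v_*|$, and reduce $\mathcal{J}_3$ to $\mathcal{J}_1$ via the regular change of variables of Lemma \ref{chv}(1), noting that the Jacobian and rescaling of the radial variable are harmless on $\theta\in[0,\pi/2]$. The one point of genuine divergence is $\mathcal{J}_2$: the paper's proof distributes the weight $|v-v_*|^{\mathfrak{r}}$ across the three slots using the unconditional inequality $|v-v_*|\lesssim\langle v_*\rangle\langle v'\rangle$ from Lemma \ref{v'bound}, and then runs Cauchy--Schwarz with the measure $|g_*||v-v_*|^{2-2s}\theta^2 b\,1_{|v'-v|\le 1}\,d\sigma\,dv\,dv_*$. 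You instead exploit that $\langle v\rangle\sim\langle v'\rangle$ on the support of the indicator $1_{|v'-v|\le 1}$, which lets you transfer the target weight $\langle v'\rangle^{-\omega_2}$ onto $v$, combine it with $\langle v\rangle^{-\omega_1}$ into a single regulariser $\langle v\rangle^{-\mathfrak{r}}$, and then close the Cauchy--Schwarz by reducing to $\mathcal{J}_1$- and $\mathcal{J}_3$-type integrals that you have already handled. This avoids invoking Lemma \ref{v'bound}; the price is that your factorisation only works because of the truncation $|v'-v|\le 1$ (which fortunately is exactly the regime in which $\mathcal{J}_2$ lives), whereas the paper's Lemma \ref{v'bound} is an unconditional estimate. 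Both routes yield the claimed bound.
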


	\begin{proof} We separate the proof into three parts.
\smallskip

		\noindent \underline{Estimate of $\mathcal{J}_1(g, h, f)$.} If $\gamma + 2s > 0$, by the definition,  $\mathfrak{r} = \gamma + 2s$. Use $|v' - v| = \sin(\theta/2)|v - v_*|$, then
		\begin{equation}\label{singular1}
			\int\theta^2b(\cos\theta)1_{|v' - v| \le a}d\sigma \sim \int_0^{\pi/2}\theta^{1 - 2s}1_{\sin\frac{\theta}{2} \le a|v - v_*|^{-1}}d\theta \lesssim a^{2 - 2s}|v - v_*|^{2s - 2}.
		\end{equation}
		Then the desired result follows since
		\begin{equation*}
			|\mathcal{J}_1(g, h, f)| \lesssim \int|g_*hf||v - v_*|^{\gamma + 2s}dvdv_* \le \int|\langle v_* \rangle^rg_*\langle v \rangle^{\omega_1}h\langle v \rangle^{\omega_2}f|dvdv_* \le |g|_{L_r^1}|h|_{L_{\omega_1}^2}|f|_{L_{\omega_2}^2}.
		\end{equation*}

		If $\gamma + 2s \le 0, \mathfrak{r}  = 0$. We split integral region into two parts: $|v - v_*| > 1$ and $|v - v_*| \le 1$. For the first one, we   use \eqref{singular1} to get that
		\begin{equation*}
			|\mathcal{J}_1(g, h, f)| \lesssim \int|g_*hf||v - v_*|^{\gamma + 2s}dvdv_* \le \int|g_*hf|dvdv_* \le |g|_{L^1}|h|_{L^2}|f|_{L^2}.
		\end{equation*}
		For the second one, since $\gamma + 2 > 0$, by \eqref{kernel1}, we have
		\begin{equation*}
			|\mathcal{J}_1(g, h, f)| \lesssim \int|g_*hf|\theta^2b(\cos\theta)d\sigma dvdv_* \lesssim \int|g_*hf|dvdv_* \le |g|_{L^1}|h|_{L^2}|f|_{L^2}.
		\end{equation*}

		\noindent\underline{Estimate of $\mathcal{J}_2(g, h, f)$.} If $\gamma + 2s > 0, \mathfrak{r}= \gamma + 2s$. By Lemma \ref{v'bound}, we have $|v - v_*|^\mathfrak{r} \lesssim \langle v_* \rangle^\mathfrak{r}\langle v \rangle^{\omega_1}\langle v' \rangle^{\omega_2}$.  Cauchy-Schwartz inequality implies that
		\begin{equation*}
		\begin{aligned}
			|\mathcal{J}_2(g, h, f)|  
			&\le \int|\langle v_* \rangle^rg_*\langle v \rangle^{\omega_1}h\langle v' \rangle^{\omega_2}f'||v - v_*|^{2 - 2s}\theta^2b(\cos\theta)1_{|v' - v| \le 1}d\sigma dvdv_* \\
			&\le \big(\int|\langle v_* \rangle^rg_*||\langle v \rangle^{\omega_1}h|^2|v - v_*|^{2 - 2s}\theta^2b(\cos\theta)1_{|v' - v| \le 1}d\sigma dvdv_*\big)^{1/2} \\
			&\times \big(\int|\langle v_* \rangle^rg_*||\langle v' \rangle^{\omega_2}f'|^2|v - v_*|^{2 - 2s}\theta^2b(\cos\theta)1_{|v' - v| \le 1}d\sigma dvdv_*\big)^{1/2}.\\
		\end{aligned}
		\end{equation*}
		By Lemma \ref{chv} and \eqref{singular1}, we have
		\begin{equation*}
			|\mathcal{J}_2(g, h, f)| \lesssim \big(\int|\langle v_* \rangle^\mathfrak{r}g_*||\langle v \rangle^{\omega_1}h|^2dvdv_*\big)^{1/2} \times \big(\int|\langle v_* \rangle^\mathfrak{r}g_*||\langle v \rangle^{\omega_2}f|^2dvdv_*\big)^{1/2} \le |g|_{L_\mathfrak{r}^1}|h|_{L_{\omega_1}^2}|f|_{L_{\omega_2}^2}.
		\end{equation*}

		If $\gamma + 2s \le 0, \mathfrak{r} = 0$, again we split the integral region into: $|v - v_*| > 1$ and $|v - v_*| \le 1$. For the first one, using the fact that$|v - v_*|^{\gamma + 2} \lesssim |v - v_*|^{2 - 2s}$, we may use Cauchy-Schwartz inequality as above to get the desired result. While for the second one, we  transfer the estimate  to $\mathcal{I}_2(g, h, f)$ to get the desired result.

		\noindent\underline{Estimate of $\mathcal{J}_3(g, h, f)$.} By Lemma \ref{chv}, we directly have
		\begin{equation*}
			|\mathcal{J}_3(g, h, f)| \lesssim \int|g_*hf|\theta^2b(\cos\theta)1_{|v' - v| \le \cos^3(\theta/2)}d\sigma dvdv_*.
		\end{equation*}
		Then use \eqref{singular1} and Cauchy-Schwartz inequality to complete the proof.
	\end{proof}

	\begin{lemma}\label{nonsingular}
		Let $\mathcal{N}_1(g, h, f) := \int g_*hfB1_{|v' - v| > 1}d\sigma dvdv_*$, $\mathcal{N}_2(g, h, f) := \int g_*hf'B1_{|v' - v| > 1}d\sigma dvdv_*$ and $\mathcal{N}_3(g, h, f) := \int g_*h'f'B1_{|v' - v| > 1}d\sigma dvdv_*$. It holds that for $1\le k\le 3$, 
		\begin{equation*}
			|\mathcal{N}_k(g, h, f)| \lesssim |g|_{L_\mathfrak{r}^1}|h|_{L_{\omega_1}^2}|f|_{L_{\omega_2}^2}, \:\:\: \omega_1, \omega_2 \ge 0, \omega_1 + \omega_2 = \mathfrak{r}.
		\end{equation*}
	\end{lemma}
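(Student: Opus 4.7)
The plan is to exploit the angular truncation $\mathbf{1}_{|v'-v|>1}$: since $|v'-v|=\sin(\theta/2)|v-v_*|$, this truncation forces both $|v-v_*|\gtrsim 1$ (taming the $|v-v_*|^\gamma$ soft factor) and $\theta\gtrsim 1/|v-v_*|$ (cutting off the angular singularity). The proof then parallels the structure of Lemma \ref{singular}, with the truncated region playing the role of the singular region.

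First I would establish the two basic angular bounds. A direct computation using $b(\cos\theta)\sim\theta^{-1-2s}$ gives
$$\int b(\cos\theta)\,\mathbf{1}_{|v'-v|>1}\,d\sigma \;\lesssim\; |v-v_*|^{2s}\,\mathbf{1}_{|v-v_*|>c},\qquad \int |v-v_*|^{-2s}b(\cos\theta)\,\mathbf{1}_{|v'-v|>1}\,d\sigma \;\lesssim\; \mathbf{1}_{|v-v_*|>c}.$$
Combined with the factorization $|v-v_*|^\gamma=|v-v_*|^{-2s}\cdot|v-v_*|^{\gamma+2s}$ and the uniform inequality $|v-v_*|^{\gamma+2s}\mathbf{1}_{|v-v_*|>c}\lesssim\langle v-v_*\rangle^{\mathfrak r}$ (valid in both cases $\gamma+2s\ge 0$ and $\gamma+2s<0$), I get the key kernel estimate that effectively replaces $B\mathbf{1}_{|v'-v|>1}$ by $\langle v-v_*\rangle^{\mathfrak r}\cdot|v-v_*|^{-2s}b(\cos\theta)\mathbf{1}_{|v'-v|>1}$ whose $\sigma$-integral is bounded.

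The weight $\langle v-v_*\rangle^{\mathfrak r}$ must then be split asymmetrically. Using $|v-v_*|\lesssim\langle v\rangle\langle v_*\rangle$ together with Lemma \ref{v'bound} ($|v-v_*|\lesssim\langle v_*\rangle\langle v'\rangle$), I would use whichever of
$$\langle v-v_*\rangle^{\mathfrak r}\;\lesssim\;\langle v_*\rangle^{\mathfrak r}\langle v\rangle^{\omega_1}\langle v'\rangle^{\omega_2},\qquad \langle v-v_*\rangle^{\mathfrak r}\;\lesssim\;\langle v_*\rangle^{\mathfrak r}\langle v'\rangle^{\omega_1}\langle v'\rangle^{\omega_2}$$
is convenient for the term under consideration. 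For $\mathcal N_1$, since both $h$ and $f$ are evaluated at $v$, I would simply integrate $d\sigma$ first, distribute $\langle v\rangle^{\mathfrak r}=\langle v\rangle^{\omega_1}\langle v\rangle^{\omega_2}$, and apply Cauchy--Schwartz in $v$. For $\mathcal N_2$, I would first absorb the weight as $\langle v_*\rangle^{\mathfrak r}\langle v\rangle^{\omega_1}\langle v'\rangle^{\omega_2}$ and apply Cauchy--Schwartz to split into an $h$-piece (weighted in $v$) and an $f'$-piece (weighted in $v'$); the $h$-piece is controlled by integrating $d\sigma$, while the $f'$-piece is reduced to the same unprimed form via the regular change of variables in Lemma \ref{chv} ($v\mapsto v'$, whose Jacobian and angular condition are harmless for $\theta\in[0,\pi/2]$). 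For $\mathcal N_3$, the analogous argument with the $\langle v_*\rangle^{\mathfrak r}\langle v'\rangle^{\omega_1+\omega_2}$ distribution works, applying the $v\mapsto v'$ change of variables on both factors after Cauchy--Schwartz.

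The main obstacle is the asymmetric weight split: a naive Cauchy--Schwartz yields only the symmetric estimate $|g|_{L^1_{\mathfrak r}}|h|_{L^2_{\mathfrak r/2}}|f|_{L^2_{\mathfrak r/2}}$, whereas the lemma requires $|h|_{L^2_{\omega_1}}|f|_{L^2_{\omega_2}}$ for arbitrary non-negative $\omega_1+\omega_2=\mathfrak r$. The resolution, as above, is to insert the desired weights $\langle v\rangle^{\omega_1}$ and $\langle v'\rangle^{\omega_2}$ into the integrand \emph{before} Cauchy--Schwartz, using the asymmetric inequality from Lemma \ref{v'bound} to absorb the compensating factor into $\langle v_*\rangle^{\mathfrak r}$ on $g_*$; no additional decomposition or interpolation is needed.
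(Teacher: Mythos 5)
Your proposal is correct and follows essentially the same route as the paper's proof: the key angular estimate $\int b(\cos\theta)\mathbf{1}_{|v'-v|>1}\,d\sigma\lesssim|v-v_*|^{2s}$, the asymmetric weight split via $|v-v_*|\lesssim\langle v\rangle\langle v_*\rangle$ and Lemma \ref{v'bound}, Cauchy--Schwartz in $(v,v_*,\sigma)$, and the regular change of variables $v\mapsto v'$ to undo the prime on the $f'$-factor. The intermediate bookkeeping through $\langle v-v_*\rangle^{\mathfrak r}$ is a minor presentational variant of the paper's direct passage from $|v-v_*|^\gamma$ to the weighted form.
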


	\begin{proof} We only give a detailed proof for $\mathcal{N}_2(g, h, f)$. The others can be proven similarly. We first observe that
\begin{equation}\label{nonsingular1}
			\int b(\cos\theta)1_{|v' - v| > a}d\sigma \sim \int_0^{\pi/2}\theta^{-1 - 2s}1_{\sin\frac{\theta}{2} > a|v - v_*|^{-1}}d\theta \lesssim a^{-2s}|v - v_*|^{2s}.
		\end{equation}
	 Thanks to the definition of $\mathfrak{r}$,  by  Lemma \ref{v'bound}, we 
  always have
		\begin{equation*}
		\begin{aligned}
			|\mathcal{N}_2(g, h, f)| &\lesssim \int|\langle v_* \rangle^\mathfrak{r}g_*\langle v \rangle^{\omega_1}h\langle v' \rangle^{\omega_2}f'||v - v_*|^{-2s}b(\cos\theta)1_{|v' - v| > 1}d\sigma dvdv_*.\\
			&\le \big(\int|\langle v_* \rangle^\mathfrak{r}g_*||\langle v \rangle^{\omega_1}h|^2|v - v_*|^{-2s}b(\cos\theta)1_{|v' - v| > 1}d\sigma dvdv_*\big)^{1/2} \\
			&\times \big(\int|\langle v_* \rangle^\mathfrak{r}g_*||\langle v' \rangle^{\omega_2}f'|^2|v - v_*|^{-2s}b(\cos\theta)1_{|v' - v| > 1}d\sigma dvdv_*\big)^{1/2}.\\
		\end{aligned}
		\end{equation*}
		By Lemma \ref{chv} and \eqref{nonsingular1}, we have
		\begin{equation*}
			|\mathcal{N}_2(g, h, f)| \lesssim \big(\int|\langle v_* \rangle^\mathfrak{r}g_*||\langle v \rangle^{\omega_1}h|^2dvdv_*\big)^{1/2} \times \big(\int|\langle v_* \rangle^\mathfrak{r}g_*||\langle v \rangle^{\omega_2}f|^2dvdv_*\big)^{1/2} = |g|_{L_r^1}|h|_{L_{\omega_1}^2}|f|_{L_{\omega_2}^2}.
		\end{equation*} This ends the proof.
	\end{proof}

	\begin{lemma}\label{nonsingular+} Let $\mathcal{P}(g, h, f) := \int g_*hf'|v - v_*|^{-2s}b(\cos\theta)1_{|v' - v| > 1}d\sigma dvdv_*$. It holds that \[|\mathcal{P}(g, h, f)| \lesssim |g|_{L^1}|h|_{L^2}|f|_{L^2}.\]
			 
	\end{lemma}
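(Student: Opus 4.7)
The plan is to apply Cauchy--Schwarz to separate the $h$ and $f'$ factors symmetrically, splitting the mass of $|g_*| b(\cos\theta) 1_{|v'-v|>1}$ evenly between them. Concretely, I would write
\[
|\mathcal{P}(g,h,f)| \le \mathcal{P}_1^{1/2} \mathcal{P}_2^{1/2},
\]
where
\[
\mathcal{P}_1 := \int |g_*||h|^2 |v-v_*|^{-2s} b(\cos\theta) 1_{|v'-v|>1} d\sigma dv dv_*,
\]
and $\mathcal{P}_2$ is defined analogously with $|f'|^2$ in place of $|h|^2$. The goal then reduces to bounding each of $\mathcal{P}_1$ and $\mathcal{P}_2$ by $|g|_{L^1}$ times $|h|_{L^2}^2$ or $|f|_{L^2}^2$, respectively.

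For $\mathcal{P}_1$, the factors $|g_*|$ and $|h|^2$ do not depend on $\sigma$, so the angular integral is done first. Estimate \eqref{nonsingular1} yields $\int b(\cos\theta) 1_{|v'-v|>1} d\sigma \lesssim |v-v_*|^{2s}$, which exactly cancels the singular factor $|v-v_*|^{-2s}$. Consequently $\mathcal{P}_1 \lesssim \int |g_*||h|^2 dv dv_* = |g|_{L^1} |h|_{L^2}^2$.

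For $\mathcal{P}_2$, the integrand depends on $v'$, so I would first perform the regular change of variables $v \mapsto v'$ from Lemma \ref{chv} (with $v_*,\sigma$ fixed). Under this substitution, $|v-v_*|$ becomes $|v-v_*|/\cos(\theta/2)$, the Jacobian is $\cos^{-3}(\theta/2)$, and the cutoff $1_{|v'-v|>1}$ becomes $1_{|v-v_*|\tan(\theta/2) > 1}$. Since $\cos(\theta/2)\sim 1$ and $\tan(\theta/2)\sim \sin(\theta/2)$ on the support of $b$, the transformed integral is controlled by
\[
\int |g_*||f|^2 |v-v_*|^{-2s} b(\cos\theta) 1_{|v-v_*|\sin(\theta/2)>1} d\sigma dv dv_*.
\]
Applying \eqref{nonsingular1} a second time gives again $\int b(\cos\theta) 1_{|v-v_*|\sin(\theta/2)>1} d\sigma \lesssim |v-v_*|^{2s}$, so $\mathcal{P}_2 \lesssim |g|_{L^1} |f|_{L^2}^2$. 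Combining the two bounds produces the claimed estimate.

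The only mildly delicate point is verifying that the cutoff $1_{|v'-v|>1}$ transforms into a condition of the same type as in \eqref{nonsingular1} after the change of variables; this relies on the comparison $\tan(\theta/2)\sim \sin(\theta/2)$ for $\theta\in[0,\pi/2]$, which is the equivalence needed to invoke \eqref{nonsingular1} a second time. Everything else is a direct imitation of the proof of Lemma \ref{nonsingular}, and I anticipate no further obstacle.
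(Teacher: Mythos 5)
Your proposal is correct and takes essentially the same approach as the paper: Cauchy–Schwarz to split $h$ and $f'$, then estimate \eqref{nonsingular1} directly for the $h$-factor and the regular change of variables of Lemma \ref{chv} followed by \eqref{nonsingular1} again for the $f'$-factor. The one detail you flag as delicate — that $1_{|v'-v|>1}$ transforms into $1_{|v-v_*|\tan(\theta/2)>1}$ and then, since $\cos(\theta/2)\sim 1$ on $[0,\pi/2]$, can be bounded by $1_{|v-v_*|\sin(\theta/2)>c}$ for a fixed constant $c$ — is handled correctly, and \eqref{nonsingular1} (which is stated for a general threshold $a$) absorbs the constant $c$ harmlessly.
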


	\begin{proof}
		By Cauchy-Schwartz inequality we have
		\begin{equation*}
		\begin{aligned}
			|\mathcal{P}(g, h, f)| &\le \big(\int|g_*||h|^2|v - v_*|^{-2s}b(\cos\theta)1_{|v' - v| > 1}d\sigma dvdv_*\big)^{1/2} \\
			&\times \big(\int|g_*||f'|^2|v - v_*|^{-2s}b(\cos\theta)1_{|v' - v| > 1}d\sigma dvdv_*\big)^{1/2}.\\
		\end{aligned}
		\end{equation*}
		Thanks to  Lemma \ref{chv} and \eqref{nonsingular1}, we have
		\begin{equation*}
			|\mathcal{P}(g, h, f)| \lesssim \big(\int|g_*||h|^2dvdv_*\big)^{1/2} \times \big(\int|g_*||f|^2dvdv_*\big)^{1/2} = |g|_{L^1}|h|_{L^2}|f|_{L^2}.
		\end{equation*}
		This ends the proof.
	\end{proof}

 {\bf Acknowledgments.} Ling-Bing He  and Wu-Wei Li are supported by NSF of China under Grants 12141102.

\end{document}